\newtheorem{theorem}{Theorem}[section]
\newtheorem{lemma}[theorem]{Lemma}
\theoremstyle{definition}
\newtheorem{definition}[theorem]{Definition}
\newtheorem{example}[theorem]{Example}
\newtheorem{corollary}[theorem]{Corollary}
\newtheorem{proposition}[theorem]{Proposition}
\newtheorem{conjecture}[theorem]{Conjecture}
\DeclareSymbolFont{Shuffle}{U}{shuffle}{m}{n}
\DeclareFontFamily{U}{shuffle}{}
\DeclareFontShape{U}{shuffle}{m}{n}{%
  <-8>shuffle7%
  <8->shuffle10%
}{}
\DeclareMathSymbol\shuffle{\mathbin}{Shuffle}{"001}
\DeclareMathSymbol\cshuffle{\mathbin}{Shuffle}{"002}
\theoremstyle{remark}
\newtheorem{remark}[theorem]{Remark}
\numberwithin{equation}{section}
\newcommand{\att}{\big \vert}
\newcommand{\pr}			{{\operatorname{\mathsf{pr}}}}
\newcommand{\born}		{{\operatorname{\mathrm{born}}}}
\newcommand{\barr}		{{\operatorname{\mathrm{bar}}}}
\newcommand{\dR}		{{\operatorname{\mathrm{dR}}}}
\newcommand{\sing}		{{\operatorname{\mathrm{sing}}}}
\newcommand{\fl}		{{\operatorname{\mathrm{flat}}}}
\newcommand{\hotimes}	{\mathbin{\widehat{\otimes}}}
\newcommand{\action} { \rotatebox[origin=c]{-90}{$\circlearrowright$}}
\DeclareMathOperator{\im}{Im}
\DeclareMathOperator{\gl}{\mathfrak{gl}}
\DeclareMathOperator{\shl}{\mathfrak{sl}}
\DeclareMathOperator{\shp}{\mathfrak{sp}}
\DeclareMathOperator{\so}{\mathfrak{so}}
\DeclareMathOperator{\GL}{GL}
\DeclareMathOperator{\supp}{supp}
\DeclareMathOperator{\Ad}{Ad}
\DeclareMathOperator{\id}{id}
\DeclareMathOperator{\tr}{tr}
\DeclareMathOperator{\End}{End}
\DeclareMathOperator{\coker}{coker}
\DeclareFontFamily{U}{MnSymbolC}{}
\DeclareSymbolFont{MnSyC}{U}{MnSymbolC}{m}{n}
\DeclareFontShape{U}{MnSymbolC}{m}{n}{
    <-6>  MnSymbolC5
   <6-7>  MnSymbolC6
   <7-8>  MnSymbolC7
   <8-9>  MnSymbolC8
   <9-10> MnSymbolC9
  <10-12> MnSymbolC10
  <12->   MnSymbolC12}{}
\DeclareMathSymbol{\intprod}{\mathbin}{MnSyC}{'270}
\DeclareFontFamily{U}{MnSymbolC}{}
\DeclareSymbolFont{MnSyC}{U}{MnSymbolC}{m}{n}
\DeclareFontShape{U}{MnSymbolC}{m}{n}{
    <-6>  MnSymbolC5
   <6-7>  MnSymbolC6
   <7-8>  MnSymbolC7
   <8-9>  MnSymbolC8
   <9-10> MnSymbolC9
  <10-12> MnSymbolC10
  <12->   MnSymbolC12}{}
\DeclareMathSymbol{\intprod}{\mathbin}{MnSyC}{'270}
\begin{document}
\title[Bornological Loday-Quillen-Tsygan theorems]{Continuous cohomology of gauge algebras and bornological Loday-Quillen-Tsygan theorems}
\author{Lukas Miaskiwskyi}
\curraddr{Utrecht, The Netherlands
}
\email{lukas.mias@gmail.com}
\date{17-6-2022}
\keywords{Homological algebra, cyclic homology, Lie theory}
\begin{abstract}
We investigate the well-known Loday-Quillen-Tsygan theorem, which calculates the Lie algebra homology of the general linear algebra~$\gl(A)$ for an associative algebra~$A$ in terms of cyclic homology, and extend the proof to bornological Lie algebra homology of Fréchet and LF-algebras. For Fréchet spaces, this equals continuous Lie algebra homology.
 To this end we prepare several statements about homological algebra of topological vector spaces, and discuss when the differential of the bornological Hochschild and cyclic complex are topological homomorphisms in the setting of Fréchet algebras. 
We apply the results to the algebras of smooth functions on a smooth manifold and compactly supported smooth functions on Euclidean space, and construct from a local-to-global principle a Gelfand-Fuks-like spectral sequence which calculates the stable part of bornological Lie algebra homology of nontrivial gauge algebras. This complements results by Maier, Janssens and Wockel.
\end{abstract}
\maketitle
\tableofcontents
\section{Introduction}
In recent years, interest has grown to understand continuous Lie algebra co\-ho\-mo\-lo\-gy of certain infinite-dimensional Lie algebras, specifically low-degree cohomology, which contains information about continuous central extensions of the given Lie algebra and thus its projective representations. 
In degree~$\leq 2$, the cohomological equations can reasonably well be handled explicitly, so many interesting examples from the realm of dif\-fe\-ren\-tial/sym\-plec\-tic geometry are now well-understood, see for example \cite{janssens2016universal}, \cite{neeb2008second}, \cite{neeb2009central}.
\\
Of specific interest in physics is the infinite-dimensional symmetry group of gauge transformations, and its corresponding infinitesimal symmetries, modelled as the sections of a Lie algebra bundle~$\mathcal{K} \to M$. This is what we call a \emph{gauge algebra}. 
When the fibres of this bundle are semisimple finite-dimensional Lie algebras, the second degree cohomology is fully understood, see \cite{maier2001central} for the globally trivial case and \cite{janssens2013universal} for the general case of nontrivial gauge algebras. 
However, the methods within these papers are very specifically suited to degree 2, which raises the question of how one might calculate higher degree cohomology.
\\
Independently, roughly 40 years ago, Loday, Quillen and Tsygan fully described the algebraic Lie algebra homology of~$\gl(A) = \varinjlim \left(\gl_n(\mathbb{K}) \otimes A\right)$ for arbitrary unital algebras~$A$ and fields~$\mathbb{K}$ with~$\mathbb{Q} \subset \mathbb{K}$ in terms of cyclic homology~$H_\bullet^\lambda(A)$, see \cite{loday1984cyclic}, \cite{tsygan1983homology}.
 Their proof lays the groundwork for results about the homology of many so-called \emph{current algebras}, Lie algebras of the shape~$\mathfrak{g} \otimes A$, where~$\mathfrak{g}$ is another Lie algebra and~$A$ is an associative algebra. 
 In particular, when~$\mathfrak{g}$ equals any of the classical simple Lie algebras, their method allows one to extract quite a lot of information.
\\
Now, if~$\mathfrak{g}$ is finite-dimensional, the current algebra~$\mathfrak{g} \otimes C^\infty(M) \cong C^\infty(M,\mathfrak{g})$ represents exactly the gauge algebra of a globally trivial Lie algebra bundle with fibres equal to~$\mathfrak{g}$, establishing a connection between the work of Loday, Quillen and Tsygan, and the study of gauge algebras. 
However, since one is in general not only interested in the algebraic Lie algebra cohomology of gauge algebras, but their continuous counterpart, one may ask the question if the proof of the Loday-Quillen-Tsygan (LQT) result holds when the involved homology theories are modified to take topological data into account. 
This would provide a unified way to calculate continuous (co-)homology of locally trivial gauge algebras with many different fibre Lie algebras, providing information in more than just low degree. 
\\
The goal of this paper is to explore this question and answer it in the affirmative for \emph{bornological} Lie algebra homology. On the~$\gl(A) = \varinjlim \gl_n(A)$, this is essentially Lie algebra homology defined in terms of Grothendieck's completed inductive topological tensor product, rather than the more standard projective tensor product. 
Due to the fact that this tensor product is compatible with the ubiquitous direct limit arguments in the proof of the LQT theorem, this appears to be the most natural framing for a topological LQT theorem.
\\
Note that in \cite{feigin1988cohomology}, a result for the continuous cohomology of~$\gl(C^\infty(M))$ for closed manifolds~$M$ is stated, but lacking a full proof. They state that the jointly continuous cohomology is freely generated by the continuous cyclic cohomology of the algebra; complementary to that, our result is that the bornological homology is equal to the topological completion of this freely generated space. While we cannot disprove their claim outright due to the difference in continuity notions, it seems likely that also the statement in joint continuity should include such a completion, as the generators contain infinite-dimensional components.
\\
We begin by laying out the foundation of this study: we recall in Section~\ref{SectionTopologicalVectorSpaces} and~\ref{SectionContinuousHochschildAndCyclicHomology} the definition and important properties of topological vector spaces, e.g. Fréchet and LF-spaces, their associated topological tensor products, and bornological homology theories for associative algebras. In particular, we prepare statements about the Fréchet algebra~$A = C^\infty(M)$ and the LF-algebra~$A = C^\infty_c(\mathbb{R}^n)$ and prove that the Hochschild and cyclic differentials are topological homomorphisms in these cases.
\\
In Section~\ref{SectionLodayQuillenFrechet}, the algebraic LQT-Theorem is extended to bornological Lie algebra homology and (homologically) unital Fréchet algebras in Theorems~\ref{thm:NuclearFrechetAlgebrasFulfilLQT} and~\ref{TheoremNonUnitalLQT}. In essence, this requires tracking through the algebraic proofs and making sure that all algebraic isomorphisms lift to topological isomorphisms in the respective topologies and on the completions of the tensor products. As an application, in Corollary~\ref{cor:LQTForSmoothFunctions} we fully state the bornological Lie algebra homology of~$\gl(C^\infty(M)) = \varinjlim \gl_n(C^\infty(M))$ and~$\gl(C^\infty_c(\mathbb{R}^n)) = \varinjlim \gl_n(C^\infty_c(\mathbb{R}^n))$, both spaces equipped with their respective direct limit topologies.
\\
Lastly, in Section~\ref{SectionNontrivialGaugeAlgs} we globalize our results to approximate the bornological Lie algebra homology of gauge algebras~$\Gamma(\Ad P \to M)$ for principal bundles~$P \to M$. We restrict the calculations to when the fibre Lie algebra is~$\gl_n(\mathbb{K})$, but the general method is easily transferrable to other classical, simple Lie algebras.  We construct in Theorem~\ref{TheoremSpectralSequenceForGammaK} a spectral sequence which calculates this homology in stable degree. This is parallel to a well-known construction by Gelfand, Fuks, Bott and Segal, who have constructed such spectral sequences  for continuous Lie algebra cohomology of vector fields on a smooth manifold \cite{bott1977cohomology} \cite{gelfand1969cohomologies}. 
Unfortunately, the entries of the second page of the spectral sequence can be specified only in terms of a certain \v{C}ech homology of product cosheaves, which we are unable to calculate and can only conjecture. This is due to the lack of a Künneth theorem in the cosheaf-theoretical setting. Assuming this conjecture, however, this spectral sequence yields a unified approach to compute low-dimensional bornological cohomology of a large class of gauge algebras. An example of such results is given in Corollary~\ref{cor:StableCohomologyInSpecialCases}.
 
A related approach is given in \cite{gwilliams2018higher}, which does not consider continuous, but \emph{local} Loday-Quillen-Tsygan Theorems, in the language of factorization algebras.
\noindent {\it Acknowledgement.}  The author was supported by the NWO Vidi Grant 639.032.734 ``Cohomology and representation theory of infinite-dimensional Lie groups''.   
\section{Topological vector spaces, bornologies, and tensor products}
\label{SectionTopologicalVectorSpaces}
\subsection{Preliminaries and definitions}
We want to begin by collecting some definitions and results regarding topological vector spaces and their tensor products. For a more detailled discussion, we direct the reader to \cite{treves1967topological}, \cite{schaefer1971locally}, \cite{meise1997einfuhrung}. 
Fix, once and for all, a topological field~$\mathbb{K}$ containing the rational numbers. All vector spaces and algebras in the following will be over~$\mathbb{K}$ unless specified otherwise.
\begin{definition}
Let~$V$ be a vector space.
\begin{itemize}
\item[i)] We call~$V$ a \emph{topological vector space (TVS)}  if it is equipped with equipped with a topology in which addition~$V \times V \to V$ and scalar multiplication~$V \times V \to V$ are continuous with respect to the according (product) topologies.
\item[ii)] We call~$V$ \emph{locally convex vector space (LCTVS)} if it is a TVS which is Hausdorff and in which every point has a neighbourhood basis consisting of convex sets.
\item[iii)] We call~$V$ a \emph{Fréchet space} if it is a metrizable and complete LCTVS.
\item[iv)] We call~$V$ an \emph{LF-space} if it is the inductive limit of a countable direct system of Fréchet spaces~$(V_n)_{n \in \mathbb{N}}$, equipped with the inductive limit topology.\\ 
Additionally, it is a \emph{strict} LF-space if the maps~$V_n \to V_m$ in the direct system (for~$m \geq n$) are topological embeddings. \\
In both cases, we write this as~$V = \varinjlim V_n$.
\item[v)] We call~$V$ a \emph{topological algebra} if it is a TVS with a continuous multiplication~$\mu : V \times V \to V$ that makes~$V$ into an associative~$\mathbb{K}$-algebra. 
\item[vi)] We call~$V$ a \emph{topological Lie algebra} if it is a TVS with a continuous map~$[\cdot,\cdot] : V \times V \to V$ that makes~$V$ into a~$\mathbb{K}$-Lie algebra. 
\end{itemize} 
\end{definition}
%
%
%
\begin{remark}
We will mix and match with the above terms when useful; for example, we will work with topological algebras where the underlying TVS is Fréchet or an LF-space. Then we will simply call it a \emph{Fréchet algebra} or an \emph{LF-algebra}, respectively.
\end{remark}
\begin{example}
Let~$M$ be a smooth manifold. Our main Fréchet space of interest will be the space of smooth functions on~$M$, denoted~$C^\infty(M)$. It is well-known to admit a topology that makes it into a Fréchet space. 
\\
This topology is sequential, and a sequence~$(f_n \in C^\infty(M))_{n \in \mathbb{N}}$ converges to~$f \in C^\infty(M)$ if and only if all (locally defined) derivatives of the~$f_n$ uniformly converge to the derivatives of~$f$ on all compact sets~$K$ contained within charts. The standard pointwise multiplication is easily shown to be continuous with respect to this topology, making it into a Fréchet algebra.
\\
Straightforwardly, these considerations can be extended to give the space of sections~$\Gamma(E)$ of a finite-dimensional vector bundle~$E \to M$ a Fréchet space struture, but, in general, with no available algebra structure.
\end{example}
\begin{example}
An important~$LF$-space for us will be~$\gl(\mathbb{K}) := \varinjlim \gl_n(\mathbb{K})$. Since all~$\gl_n(\mathbb{K})$ are finite-dimensional, they admit canonical Fréchet space structures, and they make up a direct system via the inclusions~$\gl_n(\mathbb{K}) \to \gl_m(\mathbb{K})$ for~$n \leq m$. 
The Lie brackets are compatible with these inclusion maps, and one can show that the arising Lie bracket on~$\gl(\mathbb{K})$ is continuous with respect to the LF-topology.
\end{example}
\subsection{Topological tensor products}
In contrast to the finite-dimensional case, tensor products become very delicate in infinite dimensions. Certainly one always has the algebraic tensor product~$\otimes = \otimes_{\mathbb{K}}$, but there are multiple non-equivalent ways to equip this with a topology. For our purposes we will recall two such notions.
\begin{definition} Let~$V, W$ be two LCTVS and consider the canonical map
\begin{align*}
\phi : V \times W \to V \otimes W.
\end{align*}
\begin{itemize}
\item[i)] The \emph{projective tensor product}~$V \otimes_\pi W$ denotes the vector space~$V \otimes W$ equipped with the strongest locally convex topology such that~$\phi$ is continuous.
\item[ii)] The \emph{inductive tensor product}~$V \otimes_\iota W$ denotes the vector space~$V \otimes W$ equipped with the strongest locally convex topology such that~$\phi$ is \emph{separately continuous}, meaning~$\phi(v,\cdot)~$ and~$\phi(\cdot,w)$ are continuous for all~$v \in V, w \in W$.
\item[iii)] The \emph{bornological tensor product}~$V \otimes_\beta W$ denotes the vector space~$V \otimes W$ equipped with the strongest locally convex topology such that~$\phi$ is \emph{bounded}, meaning if~$B \subset V \times W$ is bounded, then~$\phi(B) \subset V \otimes_\beta W$ is, too.
\end{itemize}
\end{definition}
\begin{remark}
The tensor products here should not be confused with the \emph{injective} tensor product, generally denoted by~$\otimes_\epsilon$.
\end{remark}
See \cite[Chapter 3.6]{schaefer1971locally}, \cite[Chapter 1]{grothendieck1955produits}, \cite{kriegl1997convenient} for proofs of the existence of these tensor product topologies and additional details.
In our setting, it suffices to work only with~$\otimes_\beta$, and we denote by~$V \hotimes W := \overline{V \otimes_\beta W}$ the completion of the bornological tensor product. We set no notation for the completion with respect to other tensor products.
%
%
%
Topological tensor products appear to have somewhat of a bad reputation, and in complete generality, they may well deserve it. However, in our setting, the categorical properties of the tensor products are fairly pleasant, especially the bornological tensor product:
\begin{proposition} 
\label{prop:PropertiesOfTopTensorProds}
Let~$V,W,U$ be LCTVS.
\begin{itemize}
\item[i)]\cite[Chapter 34.2, Chapter 43]{treves1967topological} \cite[Chapter 5]{kriegl1997convenient} If~$V$ and~$W$ are Fréchet, then all separately continuous bilinear maps~$V \times W \to U$ are continuous,
\begin{align*}
V \otimes_\pi W \cong V \otimes_\iota W \cong V \otimes_\beta W,
\end{align*}
and~$V \hotimes W$ is Fréchet.
\item[ii)] \cite[Appendix A.1.4]{meyer1999analytic} If~$V$ and~$W$ are nuclear strict LF-spaces 
\begin{align*}
V = \varinjlim V_i, \quad W = \varinjlim W_j,
\end{align*}
then~$V \hotimes W$ is a nuclear strict LF-space, and
\begin{align*}
V \otimes_\iota W \cong V \otimes_\beta W, \quad
V \hotimes W = \varinjlim (V_i \hotimes W_j).
\end{align*}
\item[iii)] \cite[Prop 2.25]{meyer1999analytic} We canonically have
\begin{align*}
(U \hotimes V) \hotimes W \cong U \hotimes (V \hotimes W), \quad
U \hotimes V \cong V \hotimes U.
\end{align*}
\end{itemize}
\end{proposition}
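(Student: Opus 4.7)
The proposition collects three structural properties of the completed bornological tensor product, each of which can be established by combining classical functional-analytic facts; since the author cites the literature for each, my plan is to recall the main inputs and explain how they assemble into the three claims.

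For (i), the key ingredients are that every Fréchet space is both barrelled and bornological. Barrelledness together with the Banach-Steinhaus theorem forces every separately continuous bilinear map $V \times W \to U$ to be jointly continuous; bornologicality forces every bounded bilinear map out of $V \times W$ to be continuous. Thus the three defining universal properties of $\otimes_\pi$, $\otimes_\iota$, $\otimes_\beta$ all coincide on $V \times W$, giving the same topology on $V \otimes W$. Metrizability is then obtained by assembling the projective seminorms from a countable fundamental family of seminorms on each of $V$ and $W$, and passing to the completion yields a Fréchet space.

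For (ii), the central tool is the Dieudonné-Schwartz theorem: in a strict LF-space $V = \varinjlim V_i$, every bounded set is contained and bounded in some single $V_i$. Using this, any bounded bilinear map out of $V \times W$ restricts to a bounded bilinear map out of each $V_i \times W_j$, and conversely a compatible family of bounded bilinear maps out of the $V_i \times W_j$ glues to a bounded bilinear map out of $V \times W$. Matching universal properties then identifies $V \otimes_\beta W$ with $\varinjlim (V_i \otimes_\beta W_j)$; the identification $\otimes_\iota \cong \otimes_\beta$ is inherited from the Fréchet steps via (i). Nuclearity enters to ensure that the Fréchet tensor products $V_i \hotimes W_j$ remain nuclear Fréchet, and that the colimit is again a nuclear strict LF-space. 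Commuting the completion past the colimit requires that the strict LF-colimit of complete spaces is already complete, which is a standard consequence of the strict hypothesis.

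Part (iii) is a universal-property argument: a bounded trilinear map $U \times V \times W \to X$ corresponds, by two iterations of the universal property, equivalently to a bounded linear map out of either $(U \hotimes V) \hotimes W$ or $U \hotimes (V \hotimes W)$, so both objects represent the same functor on complete LCTVS, giving the canonical associativity isomorphism; commutativity is the symmetric version. The main obstacle in the whole proposition is part (ii), since colimits and completions generally do not commute: the strict LF-hypothesis (to control bounded sets at finite stages) together with nuclearity (to preserve good tensorial behavior) are precisely what is needed to circumvent this.
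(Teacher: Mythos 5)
The paper does not prove this proposition at all; it is stated with citations only, so there is no internal argument to compare against. Your sketch correctly reconstructs the standard ideas underlying the cited results: barrelledness/metrizability giving the coincidence of the three topologies on Fréchet factors in (i), the Dieudonné--Schwartz regularity of strict LF-spaces driving the compatibility with colimits in (ii), and the universal property of bounded multilinear maps for (iii). One point worth making sharper in (ii): nuclearity is not only used to propagate nuclearity of the factors, but is precisely what guarantees that the induced maps $V_i \hotimes W_j \to V_{i'} \hotimes W_{j'}$ are topological embeddings (via agreement of $\hotimes$ with the injective tensor product on nuclear Fréchet spaces, which respects closed embeddings), and this strictness is what lets the colimit be complete so that the completion can be pushed inside. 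Similarly, the associativity in (iii) is genuinely nontrivial for topological tensor products in general — the paper's own remark after the proposition flags this — and the universal-property argument works specifically because $\otimes_\beta$ satisfies an exponential law in the bornological setting; for $\otimes_\pi$ on arbitrary LCTVS the analogous currying fails, which is why the claim must be routed through boundedness rather than joint continuity.
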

\begin{remark}
Note that commutativity follows quite easily using the topological isomorphism~$X \times Y \cong Y \times X$.
However, the associativity is not generally quite so obvious: For general topological tensor products, the natural vector space isomorphisms between~$X \otimes (Y \otimes Z)$ and~$(X \otimes Y) \otimes Z$ might not necessarily be continuous, see \cite{gloeckner2004tensor}.
\end{remark}
Lastly, we have an exactness property of~$\hotimes$. The following is a consequence of \cite[Thm A1.6]{eschmeier1996spectral}:
\begin{proposition}
\label{prop:ProjTensorProductExact}
Let~$U,V,W$ and~$H$ be nuclear Fréchet spaces, and
\begin{align*}
0 \to U \stackrel{f}{\to} V \stackrel{g}{\to} W \to 0
\end{align*}
an exact sequence, in the sense that~$f$ and~$g$ are continuous and linear,~$f$ is injective,~$g$ is surjective, and~$\mathrm{im} f  = \ker g$.
Then
\begin{align*}
0 \to U \hotimes H \stackrel{f \hotimes \id}{\to} V \hotimes H \stackrel{g \hotimes \id}{\to} W \hotimes H \to 0
\end{align*}
is exact in the same sense.
\end{proposition}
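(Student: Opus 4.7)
The plan is to combine the open mapping theorem with Grothendieck's theory of nuclear spaces, ultimately resting on the cited theorem of Eschmeier--Putinar. The first step is to upgrade the given exact sequence to a \emph{strict} one. Since $V$ and $W$ are Fréchet and $g$ is a continuous surjection, the open mapping theorem makes $g$ a topological quotient map identifying $W$ with $V/\ker g$. Since $\mathrm{im}\, f = \ker g$ is closed in $V$, the corestriction of $f$ onto its image is a continuous linear bijection between Fréchet spaces and is thus a topological isomorphism, so $f$ is a topological embedding onto a closed subspace. Hence the hypothesis really provides a strict short exact sequence of nuclear Fréchet spaces.

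By Proposition~\ref{prop:PropertiesOfTopTensorProds}(i), $\hotimes$ coincides with the completed projective tensor product on Fréchet spaces, and the three spaces $U \hotimes H$, $V \hotimes H$, $W \hotimes H$ remain Fréchet. Right exactness is the easier half: on the algebraic level $g \otimes \id : V \otimes H \to W \otimes H$ is surjective with kernel $\mathrm{im}(f \otimes \id)$, and it is even a topological quotient map when $\otimes_\pi$ is used, since the projective tensor product is right exact as a functor between locally convex spaces. Passing to the completions preserves surjectivity as well as the quotient structure, so that $g \hotimes \id$ is a topological quotient whose kernel is the closure of $\mathrm{im}(f \otimes \id)$ inside $V \hotimes H$.

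The remaining and main obstacle is the injectivity of $f \hotimes \id$. This is precisely where nuclearity is essential: projective tensor products are not in general left exact, and one needs the identification $\otimes_\pi \cong \otimes_\epsilon$ provided by nuclearity to access a Hahn--Banach argument for the injective tensor product, where a separately continuous bilinear functional annihilating the embedded subspace extends to the ambient space. This is exactly the content of \cite[Thm A1.6]{eschmeier1996spectral}, which one may either invoke directly or reprove by representing elements of $U \hotimes H$ as absolutely summable series $\sum \lambda_n u_n \otimes h_n$ with $u_n \to 0$ in $U$ and $h_n \to 0$ in $H$, and using the embedding property of $f$ together with a dual argument to conclude that no nontrivial such series maps to zero.

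Once $f \hotimes \id$ is known to be injective, it is a continuous linear map between Fréchet spaces whose range is closed (by another application of the open mapping theorem, or by noting that $f \hotimes \id$ factors through the isomorphism $U \hotimes H \to \ker(g \hotimes \id)$), so it is automatically a topological embedding. Its image therefore coincides with the closed subspace $\overline{\mathrm{im}(f \otimes \id)} \subset V \hotimes H$, which by the previous paragraph equals $\ker(g \hotimes \id)$. Assembling the three statements delivers the desired strict short exact sequence.
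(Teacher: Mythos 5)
Your proposal is correct and takes essentially the same route as the paper: the paper offers no independent argument, stating the proposition simply as a consequence of \cite[Thm A1.6]{eschmeier1996spectral}, which is also the linchpin of your reduction (the open-mapping upgrade to a strict sequence and the identification of $\hotimes$ with the completed projective product on Fréchet spaces being standard preliminaries). Note only that your closing remark deriving closed range of $f \hotimes \id$ ``by factoring through the isomorphism $U \hotimes H \to \ker(g \hotimes \id)$'' is circular as stated, so in the end one should invoke the cited theorem directly for the left-exactness, exactly as the paper does.
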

\section{Bornological Hochschild and cyclic homology}
\label{SectionContinuousHochschildAndCyclicHomology}
\subsection{Preliminaries and definitions}
In this section, we recall how to modify the algebraic notions of Hochschild/cyclic homology to include topological information. For a succinct presentation of the algebraic picture of Hochschild and cyclic homology and certain topological modifications, we cite \cite{loday1998cyclic}, \cite{khalkhali2009basic}, \cite{connes1985noncommutative}. A related discussion of the topological modifications also takes place in \cite{brodzki2001excision}.
 We lay no claim to originality within this section, with the exception of investigating the property of differentials to be topological morphisms; we will explain this below.
Fix a field~$\mathbb{K}$. All algebras in the following will be~$\mathbb{K}$-algebras.
\begin{definition}
\label{DefinitionHochschildComplex}
Let~$A$ be a topological algebra. The \emph{bornological Hochschild complex} of~$A$ with coefficients in itself is given by
\begin{align*}
HC_\bullet^\born(A) := 
\bigoplus_{k \geq 0} HC_k(A),
\quad
HC_k(A) := A^{\hotimes^{k+1}}.
\end{align*}
where the differential is induced by the \emph{Hochschild differential}, so
\begin{align*}
b : HC_k^\born(A) &\to HC_{k-1}^\born(A), 
\\
b(a_0 \otimes \dots \otimes a_n) :=&
\sum_{i=0}^{n-1} (-1)^{i} a_0 \otimes \dots \otimes a_i a_{i+1} \otimes \dots \otimes a_n
\\
&+ (-1)^{n+1} a_n a_0 \otimes a_1 \otimes \dots \otimes a_{n-1}.
\end{align*}
The homology of this complex is called the \emph{bornological Hochschild homology} of~$A$ and denoted~$HH^\born_\bullet(A)$.
\end{definition}
\begin{remark}
A reader who is less familiar with bornology may instead be interested in replacing~$\otimes_\beta$ with, say, the completion of~$\otimes_\pi$ or~$\otimes_\iota$ rather than~$\hotimes = \overline{\otimes_\beta}$, to get something which could reasonably be called \emph{(jointly) continuous} or \emph{separately} continuous homology. Due to Proposition~\ref{prop:PropertiesOfTopTensorProds}, this is possible whenever~$A$ is, respectively, Fréchet or a strict LF-space. These cases will be studied further in Section~\ref{SectionLodayQuillenFrechet}.
\end{remark}
\begin{remark}
Note that for nonunital algebras~$A$, the above definition is the born\-o\-lo\-gi\-cal version of what, in \cite{loday1998cyclic}, is called the \emph{naive} Hochschild homology. This does not necessarily agree with the ``correct'' version of Hochschild homology. However, in our applications, all algebras will be \emph{(bornologically)~$H$-unital}, a term we define later on, which suffices for both notions of Hochschild homology to coincide.
\end{remark}
\begin{definition}
\label{DefinitionConnesComplex}
Let~$A$ be a topological algebra. The \emph{bornological Connes complex} of~$A$ is given by
\begin{align*}
C_\bullet^{\lambda,\born}(A) := \bigoplus_{n \geq 0} C_n^{\lambda,\born}(A), \quad
C_n^{\lambda,\born}(A) := HC_n^\born(A,A)_{\mathbb{Z}/ (n+1)\mathbb{Z}},
\end{align*}
where the action of the generator~$\tau \in \mathbb{Z}/ (n+1)\mathbb{Z}$ on~$HC_n^\born(A)$ is given by cyclic permutation, meaning
\begin{align*}
\tau \cdot (a_0 \otimes \dots \otimes a_n) := (-1)^{n} a_n \otimes a_0 \otimes \dots \otimes a_{n-1} \quad \forall a_0,\dots,a_n \in A.
\end{align*}
The differential of this complex is induced by the Hochschild differential, which factors through to this complex.
The homology of this complex is called \emph{bornological cyclic homology}.
\end{definition}
Like in the algebraic setting, we will show the compatibility with an alternative definition of cyclic homology, in terms of the following double complex, see \cite[Chapter 2]{loday1998cyclic}:
\begin{definition}
\label{DefinitionCyclicBicomplex}
Let~$A$ be a topological algebra. The \emph{bornological cyclic double complex}~$CC^\born_{\bullet,\bullet}(A)$ of~$A$ is given by
\[
\begin{tikzcd}
	\dots
	\arrow{d}{b}  
	& 
	\dots 
	\arrow{d}{-b'}
	& 
	{\dots} 
	\arrow{d}{b} 
	& 
	{\dots} 
	\arrow{d}{-b'}
	& 
	{\dots} 
	\\
	{A^{\hotimes^3}}  
	\arrow{d}{b}  	
	& 
	{A^{\hotimes^3}} 
	\arrow{l}{1 - \tau} 
	\arrow{d}{-b'}  
	& 
	{A^{\hotimes^3}} 
	\arrow{l}{N} 
	\arrow{d}{b}  
	& 
	{A^{\hotimes^3}} 
	\arrow{l}{1 - \tau} 
	\arrow{d}{-b'}  
	& 
	{\dots} 
	\arrow{l}{N} 
	\\
	{A^{\hotimes^2}} 
	\arrow{d}{b}  
	& 
	{A^{\hotimes^2}} 
	\arrow{l}{1 - \tau} 
	\arrow{d}{-b'}  
	&
	{A^{\hotimes^2}} 
	\arrow{l}{N} 
	\arrow{d}{b}  
	& 
	{A^{\hotimes^2}} 
	\arrow{l}{1 - \tau} 
	\arrow{d}{-b'}  
	& 
	{\dots} 
	\arrow{l}{N} 
	\\
	{A} 
	& 
	{A} 
	\arrow{l}{1 - \tau}
	& 
	{A} 
	\arrow{l}{N}
	& 
	{A} 
	\arrow{l}{1 - \tau}
	& 
	{\dots}
	\arrow{l}{N} 
\end{tikzcd}
\]
Here:
\begin{itemize}
\item
$b$ is the Hochschild differential as in Definition~\ref{DefinitionHochschildComplex},
\item $b' : A^{\hotimes^{n+1}} \to A^{\hotimes^n}$ is the \emph{bar differential}, which is defined as the Hochschild differential without the last summand, so
\begin{align*}
b'(a_0 \otimes \dots \otimes a_n) :=&
\sum_{i=0}^{n-1} (-1)^{i} a_0 \otimes \dots \otimes a_i a_{i+1} \otimes \dots \otimes a_n,
\end{align*}
\item
~$\tau$ is the cyclic permutation as in Definition~\ref{DefinitionConnesComplex}, 
\item ~$N : A^{\hotimes^n} \to A^{\hotimes^n}$ is the \emph{norm operator}, defined as
\begin{align*}
N := 1 + \tau + \tau^2 + \dots + \tau^{n-1}.
\end{align*}
\end{itemize}
All squares anticommute, and we denote the total complex by~$CC_\bullet^\born(A)$, also called the \emph{bornological cyclic complex} of~$A$.
\end{definition}
Completely analogously one defines the \emph{cohomological} bornological Hoch\-schild, Connes, and cyc\-lic double complexes, by taking the continuous dual of all spaces in the above. We respectively denote them by
\begin{align*}
HC^\bullet_\born(A,A), \quad C^\bullet_{\lambda,\born}(A), \quad CC^{\bullet,\bullet}_\born(A).
\end{align*}
\begin{proposition}
\label{PropositionQuasiIsomorphismCyclicBicomplex}
Let~$A$ be a topological algebra over $\mathbb{R}$.
The canonical quotient
\begin{align*}
CC^\born_{\bullet}(A) \to C^{\lambda,\born}_\bullet(A)
\end{align*}
onto the zeroeth column and the canonical inclusion
\begin{align*}
C_{\lambda,\born}^\bullet(A) \to CC_\born^{\bullet}(A)
\end{align*}
into the zeroeth column are \emph{quasi-isomorphisms}, i.e. chain maps which reduce to isomorphisms of vector spaces on homology.
\end{proposition}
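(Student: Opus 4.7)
The plan is to apply the classical row-filtration spectral sequence argument to the bicomplex $CC^\born_{\bullet,\bullet}(A)$. Since the bicomplex lies in the first quadrant with continuous linear differentials, the associated row spectral sequence is bounded and converges to the homology of the total complex $CC^\born_\bullet(A)$. On the $E^0$ page, each row $q$ has the form
\begin{equation*}
\cdots \stackrel{N}{\to} A^{\hotimes(q+1)} \stackrel{1-\tau}{\to} A^{\hotimes(q+1)} \stackrel{N}{\to} A^{\hotimes(q+1)} \stackrel{1-\tau}{\to} A^{\hotimes(q+1)}.
\end{equation*}

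I would compute its homology by a Maschke-style averaging: since $\tau$ has finite order $q+1$ and $\mathbb{Q} \subset \mathbb{K}$, the operator $p := \tfrac{1}{q+1} N$ is an idempotent projecting onto $\ker(1-\tau)$, and the identity $1 - p = (1-\tau) \circ h$ with $h := \tfrac{1}{q+1}\sum_{k=0}^{q} \sum_{j=0}^{k-1} \tau^j$ provides an explicit contracting homotopy showing the row is acyclic in columns $\geq 1$. Together with the evident computation in column $0$, this gives row homology concentrated there with value $(A^{\hotimes(q+1)})_\tau = C_q^{\lambda,\born}(A)$. The induced $d^1$ on $E^1_{0,q}$ is then the Connes differential, being the descent of the vertical Hochschild $b$ via Definition~\ref{DefinitionConnesComplex}. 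Hence $E^2$ is concentrated in column $0$ with entries $H_q(C_\bullet^{\lambda,\born}(A))$, the spectral sequence degenerates at $E^2$, and this isomorphism is realized by the canonical quotient $CC^\born_\bullet(A) \to C_\bullet^{\lambda,\born}(A)$, which factors through the row filtration as the zeroeth-column projection followed by the quotient by $\im(1-\tau)$.

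The cohomological assertion follows by the dual argument applied to $CC^{\bullet,\bullet}_\born(A)$. The operators $p$ and $h$ are finite $\mathbb{K}$-linear combinations of the continuous $\tau$, so their transposes remain continuous on the continuous duals, and the analogous spectral sequence identifies the cohomology of the total complex with that of $C^\bullet_{\lambda,\born}(A)$ via the inclusion into the zeroeth column. The only genuinely bornological point is the continuity of the contracting homotopy $h$, which is immediate from its explicit form as a polynomial in $\tau$. Since the statement concerns isomorphisms of abstract vector space homology, no further subtleties regarding topological completeness or exactness interfere, and the algebraic proof carries over essentially verbatim; the main obstacle is less a technical one than a bookkeeping one, namely to confirm that the two constructions (the edge homomorphism and the canonical quotient/inclusion) agree on the nose.
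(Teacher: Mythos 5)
Your proof is correct and is essentially the same argument as the paper's: both reduce to the acyclicity of the rows of the cyclic bicomplex in positive degree together with the identification of the row $H_0$ with $C_q^{\lambda,\born}(A)$, established via explicit row homotopy operators that are polynomials in $\tau$ (hence continuous and extending to the bornological completion). The paper cites Loday \cite[Theorem 2.1.5]{loday1998cyclic} for the operators $h',h$ and concludes without making the spectral sequence explicit; you spell out the Maschke-style idempotent $p = \tfrac{1}{q+1}N$ and homotopy $h$ and phrase the conclusion via the row-filtration spectral sequence and its edge map, which is a legitimate and slightly more explicit packaging of the same underlying idea.
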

\begin{proof}
Both maps are straightforwardly chain maps, so it remains to show that they induce isomorphisms on (co-)homology.\\
We begin with a sketch of the algebraic, homological case, see \cite[Theorem 2.1.5]{loday1998cyclic}. There, one constructs homotopy operators~$h', h$ on every row of the algebraic double complex~$CC_{\bullet, \bullet}(A)$, fulfilling
\begin{align*}
h' N + (1-\tau)h = \id = N h' + h(1-\tau),
\end{align*}
showing that, all rows are acyclic in nonzero degree, and degree zero homology of the~$n$-th row equals the algebraic cyclic complex~$C_n^{\lambda}(A)$. This shows the algebraic case.
Now, all involved operators in \cite[Theorem 2.1.5]{loday1998cyclic} are easily seen to be continuous and hence extend to the respective topological completions, hence the homotopy argument extends to~$CC^\born_\bullet(A)$ and~$C^\bullet_{\lambda, \born}(A)$.
\\
The cohomological statement follows identically by dualizing the above homotopy operators.
\end{proof}
If~$A$ is additionally unital, there is another complex which calculates cyclic homology:
\begin{definition}
Let~$A$ be a unital topological algebra. The \emph{bornological~$(b,B)$-double complex}~$B_{\bullet,\bullet}^\born(A)$ of~$A$ is defined as
\[
\begin{tikzcd}
	\dots 
	\arrow{d}{b}
	& 
	\dots 
	\arrow{d}{b}
	& 
	\dots 
	\arrow{d}{b}
	\\
	{A^{\hotimes^3}} 
	\arrow{d}{b}
	& 
	{A^{\hotimes^2}} 
	\arrow{d}{b}
	\arrow{l}{B}
	& 
	A 
	\arrow{l}{B}
	\\
	{A^{\hotimes^2}} 
	\arrow{d}{b}
	& 
	A 
	\arrow{l}{B}
	\\
	A
\end{tikzcd}
\]
Here $b$ is the Hochschild differential and $B$ is the \emph{Connes operator}, defined as
\begin{align*}
B = (1-\tau)sN,
\end{align*}
with~$\tau$ the cyclic permutation, $N$ the norm operator (see Definition~\ref{DefinitionCyclicBicomplex}) and $s$ the \emph{extra degeneracy}, defined as
\begin{align*}
s : A^{\hotimes^n} \to A ^{\hotimes^{n+1}} , \quad  a_1 \otimes \dots \otimes a_n \mapsto 1 \otimes a_1 \otimes \dots \otimes a_n.
\end{align*}
All squares anticommute, and we denote the total complex by~$B_\bullet^\born(A)$.
\end{definition}
\begin{proposition}
\label{PropositionUnitalAlgebrasHomotopyEquivalence}
Let~$A$ be a unital topological algebra. Then the~$B_\bullet^\born(A)$ and~$CC_\bullet^\born(A)$ are continuously homotopy equivalent.
\end{proposition}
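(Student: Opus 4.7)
The plan is to lift the classical algebraic homotopy equivalence between $B_\bullet(A)$ and $CC_\bullet(A)$ (for unital $A$, see \cite[Thm.~2.1.8]{loday1998cyclic}) to the bornological setting, exactly in the spirit of Proposition~\ref{PropositionQuasiIsomorphismCyclicBicomplex}. First I would recall the algebraic construction: one considers the sub-bicomplex of $CC_{\bullet,\bullet}(A)$ consisting of the two leftmost columns (with differentials $b$ and $-b'$); because $A$ is unital, the $-b'$-columns are acyclic with explicit contracting homotopy $s$. Killing them by homology yields a bicomplex whose rows become the Connes operator $B = (1-\tau) s N$, and an inductive collapsing of the remaining pairs of columns of $CC_{\bullet,\bullet}(A)$ delivers explicit chain maps
\begin{align*}
I : B_\bullet(A) \to CC_\bullet(A), \quad P : CC_\bullet(A) \to B_\bullet(A)
\end{align*}
together with a chain homotopy $h$ on $CC_\bullet(A)$ satisfying $I P - \id = bh + hb_{tot}$ and $P I = \id$, all built as finite $\mathbb{K}$-linear combinations of the operators $b$, $b'$, $\tau$, $N$, and $s$ and their iterated composites.

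The second step is to verify that each of these building blocks is continuous and extends to the bornological completion. The maps $b$ and $b'$ are built from the multiplication $\mu : A \times A \to A$ applied in one tensor slot, hence factor through the bounded map $A \otimes_\beta A \to A$ and are continuous on the $\otimes_\beta$-topology; $\tau$ is a permutation of tensor factors, continuous by the symmetry part of Proposition~\ref{prop:PropertiesOfTopTensorProds}~iii); $N$ is a finite sum of such; and the extra degeneracy $s(a_1 \otimes \dots \otimes a_n) = 1 \otimes a_1 \otimes \dots \otimes a_n$ is the continuous operation of tensoring with the fixed element $1 \in A$. Consequently every operator appearing in the definition of $I$, $P$, and $h$ is a continuous $\mathbb{K}$-linear map between algebraic bornological tensor powers of $A$, and by the universal property of completion each one extends uniquely to a continuous map on the completed tensor powers $A^{\hotimes(k+1)}$.

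Having established continuity of all the individual pieces on the completed tensor products, I would then observe that the algebraic identities $I P - \id = bh + hb_{tot}$, $PI = \id$, and the chain-map conditions for $I$ and $P$ are polynomial identities in the operators $b, b', \tau, N, s$ that hold on the dense subspace $A^{\otimes (k+1)} \subset A^{\hotimes (k+1)}$; by continuity they persist on the completions. This yields the desired continuous homotopy equivalence $B^{\born}_\bullet(A) \simeq CC^{\born}_\bullet(A)$.

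The only genuinely subtle point — and hence the main obstacle to watch — is the continuity of $s$ once one worries about unbounded ``insertions'' of $1$; but because $1$ is a single fixed vector and tensoring with a fixed element is always continuous (equivalently, bounded) on the bornological tensor product, this causes no issue. Everything else is routine bookkeeping, so the proof is essentially ``observe that Loday's argument is continuous''.
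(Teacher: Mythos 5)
Your proposal is correct and follows essentially the same route as the paper: the paper's proof likewise observes that the standard algebraic homotopy equivalence (citing \cite[Proposition 3.8.1]{khalkhali2009basic}) is built from compositions of the continuous operators $N$, $s$, $(1-\tau)$ (and $b$, $b'$), so it extends to the bornological completions. Your additional bookkeeping — continuity of each generator on $\otimes_\beta$, extension by the universal property of completion, and persistence of the algebraic identities on dense subspaces — is exactly the implicit content of the paper's sketch.
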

\begin{proof}
Similar to the proof of Proposition~\ref{PropositionQuasiIsomorphismCyclicBicomplex}, one builds homotopy operators by compositions of the continuous operators~$N, s, (1 - \tau)$, thus the well-known algebraic homotopy equivalence lifts to a topological one. See \cite[Proposition 3.8.1.]{khalkhali2009basic} for details.
\end{proof}
%
%
%
%
\subsection{Closed range theorems}
In the following we will need to understand whether the range of the differential of the cyclic complex is a topologically closed subspace of the complex. This condition makes the complex more well-behaved, in the situation of Fréchet spaces turning all differentials into topological homomorphisms, and allowing for the use of a Künneth formula. 
In this subsection, we will only consider Fréchet spaces, so we may always think of~$\hotimes$ as the closure of the projective tensor product due to Proposition~\ref{prop:PropertiesOfTopTensorProds}.
The following statement due to Serre is occasionally helpful:
\begin{theorem}\cite[Lemma 2]{serre1955theoreme}
\label{thm:SerreClosedRange}
Let~$d : A \to B$ be a continuous, linear map of Fréchet spaces. If~$d(A)$ is cofinite-dimensional in~$B$, then~$d(A)$ is closed and complemented.
\end{theorem}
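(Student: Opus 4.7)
The plan is to reduce the claim to a single application of the open mapping theorem for Fréchet spaces, by artificially enlarging the source of $d$ so that one obtains a continuous surjection between Fréchet spaces.

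First, I would use the cofinite-dimensionality hypothesis to pick an algebraic complement: since $d(A)$ has finite codimension in $B$, there exists a finite-dimensional subspace $F \subset B$ with $B = d(A) \oplus F$ (as abstract vector spaces). Being finite-dimensional, $F$ carries its canonical Hausdorff TVS topology, and the inclusion $F \hookrightarrow B$ is continuous. Now I would form the Fréchet space $A \oplus F$ (a finite product of Fréchet spaces is Fréchet) and define
\begin{align*}
\Phi : A \oplus F \to B, \qquad (a,f) \mapsto d(a) + f.
\end{align*}
This map is continuous and linear, and it is surjective by the choice of $F$.

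Next, I would apply the open mapping theorem for Fréchet spaces to conclude that $\Phi$ descends to a topological isomorphism
\begin{align*}
(A \oplus F)/\ker \Phi \xrightarrow{\ \sim\ } B.
\end{align*}
The key observation is then to compute $\ker \Phi$: if $d(a) + f = 0$, then $d(a) = -f \in d(A) \cap F = 0$, forcing $f = 0$ and $a \in \ker d$. Hence $\ker \Phi = \ker d \oplus 0$, and so the quotient on the left is canonically isomorphic (as a Fréchet space) to $(A/\ker d) \oplus F$. Transporting this decomposition through $\Phi$, the factor $A/\ker d$ maps topologically onto $d(A)$, while $F$ maps to a topological complement. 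In particular, $d(A)$ is the preimage of $0$ under the continuous projection onto the finite-dimensional factor $F$, hence closed and complemented.

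I do not anticipate a genuine obstacle here: the only non-formal input is the open mapping theorem, which is precisely why the Fréchet assumption on both $A$ and $B$ is used (so that $A \oplus F$ is again Fréchet). The mild subtlety worth stating cleanly is the identification $\ker \Phi = \ker d \oplus 0$, which relies on the fact that $F$ was chosen as a vector space complement, not merely a subspace meeting $d(A)$ trivially up to closure.
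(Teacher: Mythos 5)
Your proposal is correct: the paper gives no proof of this statement, citing it directly as Serre's Lemma 2, and your open-mapping argument — enlarging the domain to $A \oplus F$ for a finite-dimensional algebraic complement $F$, noting $\ker \Phi = \ker d \oplus 0$, and transporting the product decomposition through the resulting topological isomorphism — is precisely the classical proof of that lemma. In particular, the identification of $d(A)$ as the kernel of the continuous projection onto $F$ correctly yields both closedness and the topological complement, so nothing further is needed.
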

We cite the following Theorem from \cite[Corollary 5.3]{gourdeau2005kuenneth}, a rudimentary version of which was already given in \cite{schwartz1953operations}.
\begin{theorem}
Let~$A, B$ be chain complexes of nuclear Fréchet spaces, bounded from below in the sense that
\begin{align*}
A_n = B_n = 0 \text{ if } n < 0.
\end{align*}
If the differentials of both complexes have closed range, we have an isomorphism of TVS
\begin{align*}
H_\bullet(A \hotimes B) \cong H_\bullet(A) \hotimes H_\bullet(B).
\end{align*}
\end{theorem}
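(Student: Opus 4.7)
The plan is to transport the standard algebraic Künneth argument into the nuclear Fréchet setting, using Proposition~\ref{prop:ProjTensorProductExact} as a substitute for field-flatness and the closed range hypothesis to keep every relevant subspace and quotient inside the class of nuclear Fréchet spaces. First I would record the structural setup: since each $d_n^A : A_n \to A_{n-1}$ is continuous, the cycles $Z_n(A) := \ker d_n^A$ are closed in $A_n$, and the boundaries $B_n(A) := \im d_{n+1}^A$ are closed by hypothesis. Nuclear Fréchet spaces are stable under closed subspaces and under quotients by closed subspaces, so $Z_n(A)$, $B_n(A)$, and $H_n(A) = Z_n(A)/B_n(A)$ inherit nuclear Fréchet structures, and analogously for $B$. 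The open mapping theorem then yields topological short exact sequences
\begin{align*}
0 \to Z_n(A) \to A_n \to B_{n-1}(A) \to 0, \quad 0 \to B_n(A) \to Z_n(A) \to H_n(A) \to 0,
\end{align*}
each in the sense of Proposition~\ref{prop:ProjTensorProductExact}.

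Next, I would view $Z(A) := (Z_\bullet(A), 0)$ and $B(A) := (B_\bullet(A), 0)$ as chain complexes with trivial differential; the first sequence above realizes $Z(A)$ as a subcomplex of $A_\bullet$ with quotient $B(A)[-1]$. Tensoring this short exact sequence of complexes with $B_\bullet$ and applying Proposition~\ref{prop:ProjTensorProductExact} degreewise produces a short exact sequence of chain complexes of nuclear Fréchet spaces,
\begin{align*}
0 \to Z(A) \hotimes B \to A \hotimes B \to B(A)[-1] \hotimes B \to 0.
\end{align*}
Because $Z(A)$ and $B(A)$ carry zero differential, a second application of Proposition~\ref{prop:ProjTensorProductExact} identifies
\begin{align*}
H_n\bigl(Z(A) \hotimes B\bigr) &\cong \bigoplus_{p+q=n} Z_p(A) \hotimes H_q(B),\\
H_n\bigl(B(A)[-1] \hotimes B\bigr) &\cong \bigoplus_{p+q=n-1} B_p(A) \hotimes H_q(B),
\end{align*}
where the direct sums are finite by the bounded-below assumption.

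Feeding this into the long exact sequence in homology and tracing the definition of the connecting homomorphism $\delta$, one finds --- exactly as in the algebraic proof over a field --- that $\delta$ restricted to the summand indexed by $(p,q)$ is induced by the inclusion $B_p(A) \hookrightarrow Z_p(A)$ tensored with $\id_{H_q(B)}$, which is a topological embedding by Proposition~\ref{prop:ProjTensorProductExact} applied to the second short exact sequence above. Hence $\delta$ is injective with cokernel canonically isomorphic to $\bigoplus_{p+q=n} H_p(A) \hotimes H_q(B)$, and the long exact sequence breaks into topological short exact sequences
\begin{align*}
0 \to \bigoplus_{p+q=n} H_p(A) \hotimes H_q(B) \to H_n(A \hotimes B) \to 0,
\end{align*}
delivering the claimed isomorphism.

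The principal obstacle is promoting every algebraic identification to a topological isomorphism of TVS; since none of the short exact sequences above need split, there is no cheap splitting trick available. The argument rests on repeated invocations of the open mapping theorem at each quotient step, which requires that every space in sight is Fréchet. This is precisely where the nuclear Fréchet hypothesis pays off, via Proposition~\ref{prop:PropertiesOfTopTensorProds}(i) for $\hotimes$ together with stability of the nuclear Fréchet class under closed subspaces and Hausdorff quotients. Boundedness from below is used only to ensure that each degree of $A \hotimes B$ is a finite direct sum, so that no completion issue interferes with either the identification of homology or the application of the open mapping theorem.
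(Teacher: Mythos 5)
The paper does not give its own proof of this statement: it is quoted verbatim from \cite[Corollary 5.3]{gourdeau2005kuenneth}, so there is no internal argument to compare yours against. Your route is the natural one and is, in spirit, what one expects the cited source to do: filter by the cycle/boundary short exact sequences, use Proposition~\ref{prop:ProjTensorProductExact} in place of field-flatness, and read off the long exact sequence. Your identification of the connecting map with $\iota_{B_p(A)\hookrightarrow Z_p(A)} \hotimes \id_{H_q(B)}$, the computation of $H_\bullet(Z(A)\hotimes B)$ and $H_\bullet(B(A)[-1]\hotimes B)$ (using exactness of $-\hotimes Z_p(A)$), and the observation that boundedness from below keeps each total degree a finite direct sum are all correct.

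There is, however, a genuine gap in the last step. You invoke the open mapping theorem on the premise that ``every space in sight is Fréchet,'' but the Hausdorffness of $H_n(A\hotimes B)$ --- equivalently, closedness of the image of the total differential of $A\hotimes B$ --- is \emph{not} among the hypotheses. It is in fact a \emph{consequence} of the theorem, as the paper remarks immediately after the statement. Your argument as written is circular at exactly this point: you need $H_n(A\hotimes B)$ to be Fréchet to upgrade the continuous bijection $H_n(Z(A)\hotimes B)/\im\delta_{n+1}\to H_n(A\hotimes B)$ to a topological isomorphism, but you appeal to that very isomorphism to conclude that $H_n(A\hotimes B)$ is Fréchet. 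To close this, argue as follows. By the algebraic half of the long exact sequence, injectivity of $\delta$ in all degrees forces $\alpha_n : H_n(Z(A)\hotimes B)\to H_n(A\hotimes B)$ to be surjective, so $Z_n(A\hotimes B)=B_n(A\hotimes B)+\iota\bigl(Z_n(Z(A)\hotimes B)\bigr)$. Hence the continuous linear map
\begin{align*}
\gamma : (A\hotimes B)_{n+1}\oplus Z_n(Z(A)\hotimes B)\longrightarrow Z_n(A\hotimes B),\quad (\xi,z')\mapsto D\xi+\iota(z'),
\end{align*}
is a surjection of Fréchet spaces, thus an open quotient map. Its preimage of $B_n(A\hotimes B)$ is $(A\hotimes B)_{n+1}\oplus K$ with $K:=\iota^{-1}\bigl(B_n(A\hotimes B)\bigr)$; since $B_n(Z(A)\hotimes B)\subset K$ and $K/B_n(Z(A)\hotimes B)=\ker\alpha_n=\im\delta_{n+1}$ is closed in $H_n(Z(A)\hotimes B)$ by your own embedding argument, $K$ is closed. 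Therefore $Z_n(A\hotimes B)/B_n(A\hotimes B)$ carries the Hausdorff quotient topology of a Fréchet space modulo a closed subspace; in particular $B_n(A\hotimes B)$ is closed, $H_n(A\hotimes B)$ is nuclear Fréchet, and the open mapping theorem now applies as you intended. With this inserted, the proof is complete.
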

Note that in the above situation,~$H_\bullet(A \hotimes B)$ is again canonically Fréchet, and thus the differential of~$A \hotimes B$ has closed range. Hence we can iterate this formula:
\begin{corollary}
\label{CorollarySchwartzKuenneth}
If~$A_1,\dots,A_n$ are finitely many complexes of nuclear Fréchet spaces, and all differentials have closed range, we have
\begin{align*}
H_\bullet \left( A_1 \hotimes \dots \hotimes A_n \right)
\cong
 H_\bullet \left(A_1 \right) \hotimes \dots \hotimes H_\bullet \left(A_n \right) .
\end{align*}
\end{corollary}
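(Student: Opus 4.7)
The plan is a straightforward induction on the number $n$ of factors, using the binary Künneth theorem cited above as both base case and inductive step.

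For $n = 1$ there is nothing to show, and $n = 2$ is the preceding theorem. For the inductive step, assume the statement holds for $n - 1$ factors, and set
\begin{align*}
C := A_1 \hotimes \dots \hotimes A_{n-1}.
\end{align*}
By the associativity of $\hotimes$ on nuclear Fréchet spaces (Proposition~\ref{prop:PropertiesOfTopTensorProds}(iii)), this iterated tensor product is well-defined regardless of bracketing, and by the remark immediately preceding the corollary it is again a complex of nuclear Fréchet spaces, bounded from below. So once I know its differential has closed range, I can apply the binary Künneth theorem once more to $C \hotimes A_n \cong A_1 \hotimes \dots \hotimes A_n$ to conclude
\begin{align*}
H_\bullet(A_1 \hotimes \dots \hotimes A_n)
\cong H_\bullet(C) \hotimes H_\bullet(A_n)
\cong H_\bullet(A_1) \hotimes \dots \hotimes H_\bullet(A_n),
\end{align*}
where the second isomorphism uses the inductive hypothesis.

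The key step is verifying the closed-range hypothesis for $C$, and this is precisely the point hinted at in the remark before the corollary. By the inductive hypothesis, $H_\bullet(C)$ is isomorphic as a TVS to $H_\bullet(A_1) \hotimes \dots \hotimes H_\bullet(A_{n-1})$, which is in particular Hausdorff (being a nuclear Fréchet space). Hausdorffness of the subquotient $\ker(d_C) / \mathrm{im}(d_C)$ means that $\mathrm{im}(d_C)$ is closed inside $\ker(d_C)$, and since $\ker(d_C)$ is already closed in $C$ by continuity of $d_C$, the image $\mathrm{im}(d_C)$ is closed in $C$.

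I expect no serious obstacle: the only content beyond the binary theorem is the topological bookkeeping just described, together with the observation that $\hotimes$ preserves the class of nuclear Fréchet spaces (so that one may legitimately re-invoke the theorem after forming $C \hotimes A_n$). Associativity of $\hotimes$ is needed to identify $C \hotimes A_n$ with the iterated tensor product, but this is provided by Proposition~\ref{prop:PropertiesOfTopTensorProds}(iii).
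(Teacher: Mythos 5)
Your proof is correct and is essentially the paper's own argument, spelled out in more detail: the remark preceding the corollary makes exactly the observation that $H_\bullet(A \hotimes B)$ is again Fréchet (hence Hausdorff), so the differential of $A \hotimes B$ has closed range, and one iterates. Your inductive framing and the explicit note that Hausdorffness of the subquotient forces $\mathrm{im}(d_C)$ to be closed are just a careful unwinding of what the paper says in one line.
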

%
However, there does not seem to be a simple argument for why the Hoch\-schild/cyc\-lic differentials should have closed range in large generality.
In the cases that interest us, we can make use of the following easy proposition:
\begin{proposition}
\label{PropQuasiIsoClosedRange}
Let~$(C_\bullet, d_C),(D_\bullet,d_D)$ be com\-plex\-es of Hausdorff TVS, and let there be a continuous quasi-isomorphism~$\phi: C_\bullet \to D_\bullet$.
\begin{itemize}
\item[i)]
If~$d_D$ has closed range, then~$d_C$ has closed range.
\item[ii)] If~$C_\bullet$ and~$D_\bullet$ admit a continuous homotopy equivalence, then~$d_C$ has closed range if and only if~$d_D$ does.
\end{itemize}
\end{proposition}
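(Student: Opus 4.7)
The plan is to prove (i) directly from the definitions and then derive (ii) as a corollary by applying (i) in both directions. The key observation is that ``closed range'' is simply the statement that $\mathrm{im}\, d = \overline{\mathrm{im}\, d}$ in each degree, so I only need to verify that an element in the closure of $\mathrm{im}\, d_C$ is actually a boundary, and the quasi-isomorphism $\phi$ allows me to transport this question over to $D_\bullet$ where it can be solved by hypothesis.

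For (i), I would fix a degree $k$ and an element $x \in \overline{\mathrm{im}\, d_C} \subset C_{k-1}$, written as $x = \lim_n d_C c_n$. Two steps are needed. First, $x$ must be a cycle: by continuity of $d_C$ and the Hausdorff property of $C_{k-2}$, we have $d_C x = \lim_n d_C^2 c_n = 0$. Second, applying the continuous chain map $\phi$ gives
\begin{equation*}
\phi(x) \;=\; \lim_n \phi(d_C c_n) \;=\; \lim_n d_D \phi(c_n) \;\in\; \overline{\mathrm{im}\, d_D} \;=\; \mathrm{im}\, d_D,
\end{equation*}
so $\phi(x)$ is a boundary in $D_\bullet$ and $[\phi(x)] = 0$ in $H_{k-1}(D)$. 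Since $\phi_*$ is an isomorphism, in particular injective, $[x] = 0$ in $H_{k-1}(C)$, which is to say $x \in \mathrm{im}\, d_C$. This establishes $\mathrm{im}\, d_C = \overline{\mathrm{im}\, d_C}$ in each degree.

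For (ii), a continuous homotopy equivalence supplies continuous chain maps $\phi : C_\bullet \to D_\bullet$ and $\psi : D_\bullet \to C_\bullet$ together with continuous chain homotopies $\psi\phi \simeq \id$ and $\phi\psi \simeq \id$. Since chain-homotopic maps induce the same map on homology, $\phi_*$ and $\psi_*$ are mutually inverse isomorphisms, so both $\phi$ and $\psi$ are continuous quasi-isomorphisms. Applying (i) to $\phi$ yields the implication ``$d_D$ closed range $\Rightarrow$ $d_C$ closed range,'' and applying (i) to $\psi$ yields the converse.

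I do not anticipate any serious obstacle; the proof is formal. The only point requiring mild care is the use of continuity of the differential and the Hausdorff property to conclude that limits of boundaries are cycles, but both ingredients are built into the setup of a chain complex of Hausdorff TVS.
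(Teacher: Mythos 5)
Your approach to (i) is the same as the paper's at its core: identify $\mathrm{im}\, d_C$ as exactly the set of cycles $x$ with $\phi_*[x] = 0$, and use closedness of $\mathrm{im}\, d_D$ to promote this description to a topological one. The paper packages this more cleanly: because $\mathrm{im}\, d_D$ is closed, $H_\bullet(D) = \ker d_D / \mathrm{im}\, d_D$ is a Hausdorff TVS and the composite $\tilde\phi : \ker d_C \xrightarrow{\phi} \ker d_D \to H_\bullet(D)$ is continuous; injectivity of $\phi_*$ gives $\ker \tilde\phi = \mathrm{im}\, d_C$, and kernels of continuous maps into Hausdorff spaces are closed. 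Your treatment of (ii) is identical to the paper's.

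One technical point you should repair: you argue elementwise with \emph{sequences} ($x = \lim_n d_C c_n$), but the hypothesis is that $C_\bullet$ consists of arbitrary Hausdorff TVS, where topological closure need not coincide with sequential closure. As written, your argument only shows that the sequential closure of $\mathrm{im}\, d_C$ is contained in $\mathrm{im}\, d_C$, which is weaker than closedness. The fix is mechanical — replace sequences by nets, since continuity, closure, and uniqueness of limits in Hausdorff spaces are all characterized by nets — but the paper's phrasing via $\ker \tilde\phi$ sidesteps this issue entirely and is worth adopting for that reason.
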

\begin{proof}
\textit{i)}
Since the range of~$d_D$ is closed and fully contained in~$\ker d_D$, the projection 
\begin{align*}
\pi : \ker d_D \to \frac{\ker d_D}{\im d_D [-1]} = H_\bullet(D_\bullet) 
\end{align*}
is a continuous, linear map of TVS. Denote by~$\tilde{\phi}$ the continuous, linear map arising from the composition
\begin{align*}
\ker d_C 
\stackrel{\phi}{\to} 
\ker d_D 
\stackrel{\pi}{\to} 
H_\bullet(D_\bullet).
\end{align*}
Since~$\phi$ reduces to an isomorphism on homology and~$\im d_C \subset \ker \phi$, we have that the image of $d_C$ is equal to $\ker \tilde{\phi}$, hence~$\im d_C$ is the kernel of a continuous, linear map. Since all involved spaces are Hausdorff, this kernel is closed, and we are done.
\\
\textit{ii)}
This follows from i) since a continuous homotopy equivalence of chain complexes induces continuous quasi-isomorphisms in both directions.
\end{proof}
\begin{proposition}\cite[Theorem 26.3]{meise1997einfuhrung}
\label{PropositionClosedRangeIffDual}
If~$\phi : E \to F$ is a continuous linear map of Fréchet spaces, its range is closed if and only if its transpose~$\phi^* : F^* \to E^*$  has closed range, where~$E^*$ and~$F^*$ denote the respective strong duals of~$E$ and~$F$.
\end{proposition}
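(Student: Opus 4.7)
The plan is to mimic the classical proof of Banach's closed range theorem, exploiting the duality pairing $E \times E^* \to \mathbb{K}$. By Hahn--Banach, two facts hold unconditionally: $\overline{\phi(E)}$ equals the preannihilator of $\ker \phi^*$, and the weak-$*$ closure of $\phi^*(F^*)$ in $E^*$ equals the annihilator $(\ker \phi)^\circ$. The task in each direction will therefore be to promote a weak-$*$ closure identity to a genuine (strong) closure identity.

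For the direction ``$\phi(E)$ closed implies $\phi^*(F^*)$ closed,'' I would first invoke the open mapping theorem: both $E/\ker \phi$ and $\phi(E)$ are Fr\'echet, so the induced continuous bijection $\tilde \phi : E/\ker\phi \to \phi(E)$ is a topological isomorphism. Dualizing identifies $\phi(E)^*$ with the annihilator $(\ker\phi)^\circ \subset E^*$, and Hahn--Banach provides a surjective restriction $F^* \twoheadrightarrow \phi(E)^*$. Composing these, one concludes $\phi^*(F^*) = (\ker\phi)^\circ$, which is weak-$*$ closed in $E^*$ and hence, a fortiori, strongly closed.

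For the converse, I would first pass to $\overline{\phi(E)} \subset F$ and to $E/\ker \phi$ to reduce to the case in which $\phi$ is injective with dense image; the task then becomes showing that $\phi$ is surjective. The strategy is to extract from the strong closedness of $\phi^*(F^*)$ a quantitative seminorm estimate of the form $p(x) \leq C\, q(\phi(x))$ for suitably matched fundamental seminorms $p$ on $E$ and $q$ on $F$. Such an estimate, combined with completeness of $E$ and density of $\phi(E)$ in $F$, forces $\phi(E) = F$ and turns $\phi$ into a topological isomorphism.

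The main obstacle is this last extraction step. Unlike in the Banach setting, the strong dual $E^*_\beta$ of a Fr\'echet space is generally not Fr\'echet but only a (DF)-space, so the open mapping theorem is not directly applicable to $\phi^*$. One must instead juggle the weak-$*$, Mackey and strong topologies on $E^*$ and exploit the bidual identification of $E$ inside $E^{**}$ via the canonical evaluation; the required matching of fundamental seminorm sequences is the real content of the theorem, and I would follow the explicit construction carried out in \cite[Ch.\ 26]{meise1997einfuhrung}.
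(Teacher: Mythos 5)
The paper itself offers no argument for this proposition: it is quoted verbatim from \cite[Theorem 26.3]{meise1997einfuhrung}, so there is no internal proof to compare against step by step. Measured against that, your plan is sound and in fact does more than the paper: the forward implication is complete and correct as you sketch it --- with $\phi(E)$ closed, the open mapping theorem makes $E/\ker\phi \to \phi(E)$ a topological isomorphism, Hahn--Banach makes the restriction $F^* \to \phi(E)^*$ surjective, and composing with the embedding $(E/\ker\phi)^* \cong (\ker\phi)^\circ \subset E^*$ gives $\phi^*(F^*) = (\ker\phi)^\circ$, which is weak-$*$ closed and hence strongly closed. Your reductions in the converse direction (to $\overline{\phi(E)}$ and to $E/\ker\phi$, so that the goal becomes an embedding estimate plus completeness and density) are also legitimate. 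However, be aware that the one step you defer --- extracting the seminorm estimate from strong closedness of $\phi^*(F^*)$ --- is precisely the entire content of the hard implication, and you defer it to the very Chapter 26 of Meise--Vogt that the paper cites for the whole statement; read as a standalone proof, your proposal therefore establishes only one direction and is circular for the other. That is acceptable in the context of this paper, which treats the result as a black box, but if you wanted to close the gap yourself, note that the Meise--Vogt argument does not literally produce your estimate directly from strong closedness; it rather shows that for the range of a transpose of a map of Fr\'echet spaces, strong closedness implies weak-$*$ closedness (via equicontinuous sets and a Krein--\v{S}mulian-type argument, which is where the failure of $E^*_\beta$ to be Fr\'echet is circumvented), and then passes from weak-$*$ closedness of $\phi^*(F^*)$ back to closedness of $\phi(E)$ by the bipolar theorem and a surjectivity criterion; your seminorm formulation is an equivalent packaging of that last step, not a shortcut around the first.
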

\begin{corollary}
\label{CorollaryAllCyclicComplexesAreSimultaneouslyClosed}
Let~$A$ be a Fréchet algebra.
\begin{itemize}
\item[i)]
The differential of the Connes complex~$C^{\lambda,\born}_\bullet(A)$ has closed range if and only if the differential of~$CC^\born_\bullet(A)$ has closed range. 
\item[ii)]
If~$A$ is unital, then all differentials of the complexes
\begin{align*}
C_\bullet^{\lambda,\born}(A), \quad CC^\born_\bullet(A), \quad B_{\bullet}^\born(A)
\end{align*}
have closed range if and only if a single one of them does.
\end{itemize}
\end{corollary}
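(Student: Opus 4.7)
The proof assembles the quasi-isomorphisms of Proposition~\ref{PropositionQuasiIsomorphismCyclicBicomplex}, the continuous homotopy equivalence of Proposition~\ref{PropositionUnitalAlgebrasHomotopyEquivalence}, and the duality statement of Proposition~\ref{PropositionClosedRangeIffDual}, all funneled through the closed-range criterion of Proposition~\ref{PropQuasiIsoClosedRange}. First I would record a brief book-keeping remark: for a Fréchet algebra~$A$, the tensor powers~$A^{\hotimes^{k+1}}$ are Fréchet by Proposition~\ref{prop:PropertiesOfTopTensorProds}~i), and each total degree of the complexes in question is either a finite direct sum or a quotient by the continuous action of a finite cyclic group of such spaces, so the differentials are continuous linear maps between Fréchet spaces and Proposition~\ref{PropositionClosedRangeIffDual} applies.

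For part i), one direction is immediate from Proposition~\ref{PropositionQuasiIsomorphismCyclicBicomplex}: the continuous quasi-isomorphism $CC^\born_\bullet(A) \to C^{\lambda,\born}_\bullet(A)$ together with Proposition~\ref{PropQuasiIsoClosedRange}~i) yields that closed range of $d_{C^{\lambda,\born}}$ implies closed range of $d_{CC^\born}$. The reverse direction is the technical core and requires a detour through cohomology. If $d_{CC^\born}$ has closed range, then by Proposition~\ref{PropositionClosedRangeIffDual} so does its transpose, i.e.\ the differential of the cohomological cyclic complex $CC_\born^{\bullet}(A)$. Applying Proposition~\ref{PropQuasiIsoClosedRange}~i) to the continuous cohomological quasi-isomorphism $C^\bullet_{\lambda,\born}(A) \to CC_\born^{\bullet}(A)$ (also furnished by Proposition~\ref{PropositionQuasiIsomorphismCyclicBicomplex}) then yields that the cohomological Connes differential has closed range. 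A final application of Proposition~\ref{PropositionClosedRangeIffDual} transports this back to closed range of $d_{C^{\lambda,\born}}$, closing the loop.

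For part ii), once i) is established it suffices to match~$CC_\bullet^\born(A)$ with~$B_\bullet^\born(A)$ in the unital case. Here Proposition~\ref{PropositionUnitalAlgebrasHomotopyEquivalence} produces a continuous homotopy equivalence between the two complexes, and Proposition~\ref{PropQuasiIsoClosedRange}~ii) is precisely tailored to this situation: it gives the "if and only if" for closed range directly, without any duality detour. Chaining this with i) delivers the full "a single one of them" statement.

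The main obstacle is the asymmetry in Proposition~\ref{PropQuasiIsoClosedRange}~i): continuous quasi-isomorphisms only propagate closed range in one direction, whereas continuous homotopy equivalences propagate it in both. In part i) there is no obvious continuous homotopy inverse to the canonical projection $CC^\born_\bullet(A) \to C^{\lambda,\born}_\bullet(A)$, so the reverse direction must be forced by dualizing twice and exploiting the corresponding cohomological quasi-isomorphism via Proposition~\ref{PropositionClosedRangeIffDual}. Verifying that all the intermediate spaces remain Fréchet (and thus eligible for that duality) is routine but worth checking explicitly; once that is done, the rest of the argument is purely formal.
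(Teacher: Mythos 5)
Your argument is correct and follows the paper's own proof essentially verbatim: the forward direction of i) via the homological quasi-isomorphism and Proposition~\ref{PropQuasiIsoClosedRange}~i), the reverse via dualizing, applying the cohomological quasi-isomorphism, and dualizing back with Proposition~\ref{PropositionClosedRangeIffDual}, and ii) via the homotopy equivalence of Proposition~\ref{PropositionUnitalAlgebrasHomotopyEquivalence}. The extra paragraph explaining \emph{why} the duality detour is forced by the one-sidedness of Proposition~\ref{PropQuasiIsoClosedRange}~i) is a helpful annotation rather than a deviation from the paper's route.
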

\begin{proof}
\textbf{i)}
$\Longrightarrow$: If $C^{\lambda,\born}_\bullet(A)$ has closed range, then the quasi-isomorphism from Proposition~\ref{PropositionQuasiIsomorphismCyclicBicomplex} together with Proposition~\ref{PropQuasiIsoClosedRange} shows that~$CC^\born_\bullet(A)$ has closed range as well.
\\
$\Longleftarrow$: If~$CC^\born_\bullet(A)$ has closed range, then by Proposition~\ref{PropositionClosedRangeIffDual} so does the cohomological complex~$CC_\born^\bullet(A)$ since all spaces in the homological total complex are Fréchet and all differentials of the cohomological complex are induced by dualization. 
Again Propositions~\ref{PropositionQuasiIsomorphismCyclicBicomplex} and~\ref{PropQuasiIsoClosedRange} show that the cohomological complex~$C_{\lambda,\born}^\bullet(A)$ has closed range as well. By Proposition~\ref{PropositionClosedRangeIffDual} this extends to~$C^{\lambda,\born}_\bullet(A)$. The equivalence is shown.
\\
\textbf{ii)} This follows from i) and the continuous homotopy equivalence in Proposition~\ref{PropositionUnitalAlgebrasHomotopyEquivalence}.
\end{proof}
\subsection{The case of smooth functions}
The most important algebra in our context is~$A = C^\infty(M)$, the algebra of smooth functions on a smooth manifold~$M$ with its standard Fréchet structure, see for example \cite[Chapter 10.1]{treves1967topological} for details. The following theorem on its bornological/continuous Hochschild homology with coefficients in itself its well-established, see \cite{connes1985noncommutative} for the cohomological proof when~$M$ is compact, and \cite{pflaum1998continuous}, \cite{tele1998micro} for extensions to the noncompact case.
\begin{theorem}
\label{TheoremHochschildHomologyOfSmoothFunctions}
Let~$M$ be a smooth manifold (possibly with boundary). Equipping the Fréchet space of forms~$\Omega^\bullet(M)$ with the zero differential, the map
\begin{align*}
HC_\bullet^\born(C^\infty(M),C^\infty(M)) 
&\to 
\Omega^\bullet(M), \\
f_0 \otimes \dots \otimes f_n 
&\mapsto
f_0 d_\dR f_1 \wedge \dots \wedge d_\dR f_n \quad \forall f_i \in C^\infty(M)
\end{align*}
is a quasi-isomorphism.
\end{theorem}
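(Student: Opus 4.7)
The map $\alpha$ is straightforwardly a chain map: the quadratic terms in the Hochschild differential die by the Leibniz rule $d_\dR(fg) = f\, d_\dR g + g\, d_\dR f$, and the cyclic wrap-around term $(-1)^{n+1}(f_n f_0)\, d_\dR f_1 \wedge \cdots \wedge d_\dR f_{n-1}$ vanishes against the corresponding Hochschild-differential contribution by antisymmetry of the wedge product. The real task is therefore to show that $\alpha$ induces an isomorphism on homology.

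The central structural input is that, by nuclearity of $C^\infty(M)$ together with Proposition~\ref{prop:PropertiesOfTopTensorProds}(i) and the Schwartz kernel theorem, one has a canonical topological isomorphism $HC_k^\born(C^\infty(M)) = C^\infty(M)^{\hotimes(k+1)} \cong C^\infty(M^{k+1})$. Under this identification, the Hochschild differential becomes an alternating sum of restrictions along the codimension-one "consecutive-coincidence" embeddings $M^k \hookrightarrow M^{k+1}$ (plus a cyclic wrap-around), and $\alpha$ factors as restriction to the deep diagonal $M \hookrightarrow M^{k+1}$ followed by an antisymmetrization in the transverse directions.

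The proof then proceeds in two stages. First, I would reduce to a local statement on star-shaped open $U \subset \mathbb{R}^n$. For this, one applies Proposition~\ref{prop:ProjTensorProductExact} to a Mayer--Vietoris short exact sequence $0 \to C^\infty(U \cup V) \to C^\infty(U) \oplus C^\infty(V) \to C^\infty(U \cap V) \to 0$ of nuclear Fréchet spaces to obtain a short exact sequence of bornological Hochschild complexes, giving a long exact sequence in $HH^\born_\bullet$ that is compatible with the analogous sheaf sequence for $\Omega^\bullet$. A standard \v{C}ech / spectral-sequence argument then reduces the claim to a cover by star-shaped opens in charts. Second, on such a $U$, I would construct an explicit continuous chain contraction on $C^\infty(U^{k+1})$ onto the antisymmetric jet of its argument along the diagonal, via integration along rays retracting $U^{k+1}$ onto $\Delta(U)$. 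On the jet level the complex becomes the algebraic Hochschild complex of a formal polynomial algebra, where the statement is the classical Hochschild--Kostant--Rosenberg isomorphism, a purely algebraic computation.

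The main obstacle is to ensure continuity of every step in the bornological / Fréchet setting. Exactness of the Mayer--Vietoris sequence of Hochschild complexes is nontrivial and relies crucially on Proposition~\ref{prop:ProjTensorProductExact}, and the Taylor-remainder homotopies on $C^\infty(U^{k+1})$ must be verified to be continuous linear endomorphisms in the standard Fréchet topology (which follows from the smoothness of the contraction and continuity of derivatives, but has to be tracked carefully since the homotopy degree grows with $k$). The case of nonempty boundary requires an auxiliary Whitney extension in a collar, which can be appended without altering the structure of the proof.
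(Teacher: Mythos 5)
The paper does not actually prove Theorem~\ref{TheoremHochschildHomologyOfSmoothFunctions}; it quotes it from the literature (Connes for compact $M$ in the cohomological setting, Pflaum and Teleman for the noncompact case), so your attempt has to stand on its own. Its overall shape — identify $C^\infty(M)^{\hotimes(k+1)}\cong C^\infty(M^{k+1})$, localize to charts, then contract onto the diagonal and invoke a formal HKR computation — is indeed the right general strategy, and is the spirit of the cited proofs. But the mechanism you propose for the localization step contains a genuine gap.

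The Mayer--Vietoris sequence $0\to C^\infty(U\cup V)\to C^\infty(U)\oplus C^\infty(V)\to C^\infty(U\cap V)\to 0$ does \emph{not} induce a short exact sequence of bornological Hochschild complexes, and Proposition~\ref{prop:ProjTensorProductExact} cannot manufacture one: that proposition tensors a short exact sequence with a single fixed nuclear Fréchet space, whereas the degree-$k$ Hochschild chains take the $(k+1)$-st tensor power of the algebra, and tensor powers do not preserve short exact sequences (already $0\to\mathbb{K}\to\mathbb{K}^2\to\mathbb{K}\to 0$ fails after squaring, by a dimension count). Concretely, under your identification the degree-$k$ sequence would read $0\to C^\infty((U\cup V)^{k+1})\to C^\infty(U^{k+1})\oplus C^\infty(V^{k+1})\to C^\infty((U\cap V)^{k+1})\to 0$, and since $(U\cup V)^{k+1}$ is strictly larger than $U^{k+1}\cup V^{k+1}$, the first map is not injective and exactness in the middle fails: the restrictions to $U^{k+1}$ and $V^{k+1}$ see nothing of the mixed blocks $U\times V\times\cdots$. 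Hence there is no long exact sequence and the Čech/spectral-sequence reduction to star-shaped charts collapses. What replaces it in the literature is a localization of the Hochschild complex near the diagonal (Teleman; compare the use of Brasselet--Teleman in Proposition~\ref{PropositionFlatHochschild}) or Connes' topological Koszul resolution in the compact case; only after localizing does one obtain sheaf-like behaviour on $M$ itself, and for noncompact $M$ one must additionally handle the fact that $C^\infty(M)$ is a projective limit, which does not commute with Hochschild chains. Your local step also needs more than stated: contracting onto jets along the diagonal requires proving acyclicity of the subcomplex of chains flat along the diagonal (again essentially the localization theorem), and the jet algebra is a completed power-series algebra, so a topological HKR is needed there, not the purely algebraic polynomial statement. (Minor point: the wrap-around term cancels using commutativity of the product, not antisymmetry of the wedge, and this requires the sign $(-1)^{n}$ on the last term of $b$ — with the sign $(-1)^{n+1}$ as printed in Definition~\ref{DefinitionHochschildComplex}, the HKR map is not a chain map.)
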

The calculation of bornological cyclic homology of~$C^\infty(M)$ builds on the above fact and is also well-established. We will state it here together with the additional information that the differential has closed range.
\begin{theorem}
\label{thm:CyclicHomologyClosedRangeForSmoothFunctions}
Let~$A = C^\infty(M)$ with its standard Fréchet space structure. Then
\begin{align*}
H_n^{\lambda,\born}(C^\infty(M)) = \frac{\Omega^n(M)}{d_\dR \Omega^{n-1}(M)} \oplus H^{n-2}_\dR(M) \oplus H_\dR^{n-4}(M) \oplus \dots
\end{align*}
and the differential of the Connes complex~$C^{\lambda,\born}_\bullet(C^\infty(M))$ has closed range.
\end{theorem}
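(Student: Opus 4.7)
The plan is to reduce the cyclic homology computation to the $(b,B)$-bicomplex via Proposition~\ref{PropositionUnitalAlgebrasHomotopyEquivalence} (since $C^\infty(M)$ is unital), and then run the standard column-filtration spectral sequence whose $E_1$-page is computed by Theorem~\ref{TheoremHochschildHomologyOfSmoothFunctions}. Concretely, filter $B_\bullet^\born(C^\infty(M))$ by columns. The $E_0$-differential is the Hochschild differential $b$, so by Theorem~\ref{TheoremHochschildHomologyOfSmoothFunctions} the $E_1$-page consists of copies of $\Omega^\bullet(M)$, with $\Omega^k$ sitting in the appropriate bidegree of each column. The induced $d_1$-differential is the Connes operator $B$, and under the Hochschild--Kostant--Rosenberg quasi-isomorphism $f_0 \otimes \cdots \otimes f_n \mapsto f_0\, df_1 \wedge \cdots \wedge df_n$ a direct calculation (classical, and performed in the algebraic case in \cite{loday1998cyclic}) identifies $B$ with the de Rham differential $d_\dR$ up to a nonzero constant.

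Consequently, the $E_2$-page in column $p$ reads the de Rham cohomology of $M$. For bidegree reasons all higher differentials vanish and the spectral sequence collapses at $E_2$. Reading off the diagonals gives the stated direct sum decomposition: the column $p=0$ contributes the ``top'' term $\Omega^n(M)/d_\dR \Omega^{n-1}(M)$ (the cokernel of $d_\dR$ on the highest row reachable in total degree $n$), while columns $p \geq 1$ contribute the shifted de Rham cohomologies $H_\dR^{n-2}(M), H_\dR^{n-4}(M), \dots$. All identifications are through continuous maps built from the continuous operators $b$, $B$, $N$, $s$, $(1-\tau)$ together with the continuous HKR map, and the homotopies from Propositions~\ref{PropositionQuasiIsomorphismCyclicBicomplex} and~\ref{PropositionUnitalAlgebrasHomotopyEquivalence} are likewise continuous, so the isomorphisms are topological.

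For the closed range statement, I would proceed in two steps. First, Theorem~\ref{TheoremHochschildHomologyOfSmoothFunctions} gives a continuous quasi-isomorphism onto $\Omega^\bullet(M)$ equipped with the \emph{zero} differential, whose range is trivially closed; hence Proposition~\ref{PropQuasiIsoClosedRange}(i) yields that the Hochschild differential on $HC_\bullet^\born(C^\infty(M))$ has closed range. Second, by Corollary~\ref{CorollaryAllCyclicComplexesAreSimultaneouslyClosed}(ii), it suffices to establish closed range for any one of $C_\bullet^{\lambda,\born}(A)$, $CC_\bullet^\born(A)$, $B_\bullet^\born(A)$. I would argue for $B_\bullet^\born(A)$ using the spectral sequence above: the $E_2$-page consists of de Rham cohomology groups and the cokernel $\Omega^n/d_\dR \Omega^{n-1}$, each of which is Fréchet because $d_\dR$ has closed range on $\Omega^\bullet(M)$ (a classical fact, e.g. via Hodge theory on relatively compact charts and a standard exhaustion argument). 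The associated graded of the filtration on $H_n^{\lambda,\born}$ is therefore a finite direct sum of Fréchet spaces, so cyclic homology is Hausdorff, which for chain complexes of Fréchet spaces is equivalent to the differential having closed range.

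The main obstacle I anticipate is the last step: carefully transferring ``Fréchet-ness of the associated graded'' to ``Fréchet-ness (equivalently, Hausdorffness) of the homology space itself,'' since the filtration on the total complex only gives an $E_\infty$-page of subquotients, and one has to verify that the cyclic differential indeed has closed range — not merely that the spectral sequence \emph{formally} converges to something with Fréchet pieces. I expect this to go through by iterating Proposition~\ref{PropQuasiIsoClosedRange} along the truncations of the bicomplex, exploiting that $b$ already has closed range (established above) so that each column individually is a Fréchet complex with Fréchet homology, and then using closed-range stability under finite direct sums and extensions of Fréchet spaces to handle the finitely many nonzero columns contributing to a fixed total degree.
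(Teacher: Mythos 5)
Your setup (column filtration of the $(b,B)$-bicomplex, identification of the $E^1$-page with differential forms via Theorem~\ref{TheoremHochschildHomologyOfSmoothFunctions}, and $d^1=d_\dR$ up to a constant) is the same starting point as the paper, but there are two genuine gaps in how you finish. First, the claim that all higher differentials vanish ``for bidegree reasons'' is not justified: with the column filtration, $d^r$ on the $E^r$-page maps (a subquotient of) $H^{q-p}_{\dR}(M)$ to (a subquotient of) $H^{q-p+2r-1}_{\dR}(M)$, and nothing about the bidegrees forces such a map to be zero; degeneration at $E^2$ is true but is itself a statement that needs proof. Second, even granting degeneration, the spectral sequence only identifies the associated graded of a filtration on $H^{\lambda,\born}_n$, and only algebraically; the topological identification, and hence the closed-range statement, does not follow. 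Your own flagged obstacle is exactly this point, and the proposed repair (deducing Hausdorffness of the homology from Fr\'echet-ness of the $E^\infty$-pieces, plus ``closed-range stability under extensions'') is circular in spirit: identifying the filtration quotients of the homology with the $E^\infty$-terms \emph{as topological vector spaces} already presupposes openness/closed-range properties of the kind you are trying to establish.

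Both issues are resolved at once by a device you have all the ingredients for but do not assemble: use the continuous HKR map to define a continuous map of double complexes from $B^{\born}_{\bullet,\bullet}(C^\infty(M))$ to the bicomplex whose entries are $\Omega^k(M)$, with horizontal differential $d_\dR$ and zero vertical differential. Filtering both by columns, this map induces an isomorphism on first pages by Theorem~\ref{TheoremHochschildHomologyOfSmoothFunctions}, so the spectral sequence comparison theorem gives a continuous quasi-isomorphism of the \emph{total} complexes; the total homology of the target is computed directly and is the stated answer, so no degeneration or extension analysis is needed. Moreover, the target's total differential is built from $d_\dR$, which has closed range by \cite[Proposition 5.4]{palamodov1972stein}, so Proposition~\ref{PropQuasiIsoClosedRange}(i) immediately gives closed range for the total differential of the $(b,B)$-bicomplex, and Corollary~\ref{CorollaryAllCyclicComplexesAreSimultaneouslyClosed}(ii) transfers it to the Connes complex. (Your first step, closed range of $b$ via the quasi-isomorphism onto $(\Omega^\bullet(M),0)$, is correct; it is the comparison at the level of the whole bicomplex that makes the cyclic-level claims go through.)
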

\begin{proof}
The first statement is proven, for example, in \cite[Example 3.10.1]{khalkhali2009basic}, and we sketch the proof. The isomorphism from Theorem~\ref{TheoremHochschildHomologyOfSmoothFunctions} induces continuous chain map from the bornological~$(b,B)$-double complex of~$C^\infty(M)$ to the following complex of Fréchet spaces:
\begin{equation}
\label{eq:ConstructedDoubleComplex}
\begin{tikzcd}
\dots  
\arrow{d}{0} 
& 
\dots  
\arrow{d}{0}   
& 
\dots  
\arrow{d}{0}  
\\
\Omega^2(M) 
\arrow{d}{0}   
& 
\Omega^1(M) 
\arrow{l}{d_\dR}
\arrow{d}{0}  
& 
\Omega^0(M)  
\arrow{l}{d_\dR}
\\
\Omega^1(M) 
\arrow{d}{0} 
& 
\Omega^0(M)
\arrow{l}{d_\dR}
& 
\\
\Omega^0(M)  
&  
&  \\
\end{tikzcd}
\end{equation}
This map is compatible with the differentials, and if we filter both double complexes by their columns and consider the induced spectral sequences, the map descends to an isomorphism on the first page. Thus, by the spectral sequence comparison theorem (see \cite[Theorem 5.2.12]{weibel1995introduction}), the original map is a quasi-isomorphism of the total complexes. Then the calculation of bornological cyclic homology of~$C^\infty(M)$ follows from the easy calculation of the total homology of \eqref{eq:ConstructedDoubleComplex}.
\cite[Proposition 5.4]{palamodov1972stein} states that the de Rham differential associated to a smooth manifold has closed range, hence also the total differential of the double complex \eqref{eq:ConstructedDoubleComplex}). Thus, by Proposition~\ref{PropQuasiIsoClosedRange}, so do the differentials of the total complex of the bornological~$(b,B)$-double complex, and by Corollary~\ref{CorollaryAllCyclicComplexesAreSimultaneouslyClosed} also the differential of~$C^\lambda_\bullet(A)$. This proves the statement.
\end{proof}
\subsection{The case of compactly supported functions}
\label{SubsectionHochschildAndCyclicOfCompactly}
In preparation for results about compactly supported gauge algebras, we want to establish some properties of algebras related to compactly supported functions. Specifically, recall that one generally defines the topology of~$C^\infty_c(\mathbb{R}^n)$ as a direct limit: 
For every compact~$K \subset \mathbb{R}^n$, define the Fréchet subalgebra
\begin{align*}
C^\infty_K(\mathbb{R}^n) := \{f \in C^\infty(\mathbb{R}^n) : \supp f \subset K \} \subset C^\infty(\mathbb{R}^n).
\end{align*}
Denote by~$\bar D_r(0) \subset \mathbb{R}^n$ the closed disk of radius~$r$ around 0. Then the inclusions~$\bar D_r(0) \subset \bar D_{r'}(0)$ for~$r' > r$ induce a direct system~$\{C^\infty_{\bar{D}_r(0)}(\mathbb{R}^n)\}_{r > 0}$ with
\begin{align*}
C^\infty_c(\mathbb{R}^n) = \varinjlim C^\infty_{\bar{D}_r(0)}(\mathbb{R}^n).
\end{align*}
All spaces within this direct system are Fréchet spaces, so if one equips~$C^\infty_c(\mathbb{R}^n)$ with the inductive limit topology, it is a strict LF-space. We will avoid working with this LF-space directly and just work with the underlying Fréchet spaces, since homology and~$\hotimes$ commute with direct limits on strict LF-spaces. 
However, we want to remark that the LF-algebra~$C^\infty_c(\mathbb{R}^n)$ itself also has good properties with respect to bornological homology theories, see \cite{meyer2010excision}.
From here on, set~$D := \bar{D}_1(0) \subset \mathbb{R}^n$. We first cite the following important factorization result from \cite[Theorem 3.4]{voigt1984factorization}:
\begin{theorem}
\label{TheoremVoigtFactorization}
The Fréchet algebra~$C^\infty_D(\mathbb{R}^n)$ has the \emph{bounded strong factorization property:} For every bounded set~$B \subset C^\infty_D(\mathbb{R}^n)$, there is a~$z \in C^\infty_D(\mathbb{R}^n)$, a continuous linear operator~$T : C^\infty_D(\mathbb{R}^n) \to C^\infty_D(\mathbb{R}^n)$ and a sequence~$\{\phi_n\}_{n \geq 1} \subset C^\infty_D(\mathbb{R}^n)$ with
\begin{align*}
z \cdot T(x) = x, \quad \quad T(x) = \lim_{n \to \infty} \phi_n \cdot x \quad  \forall x \in B.
\end{align*}
\end{theorem}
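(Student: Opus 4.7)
The statement is attributed to Voigt, and the paper's plan is simply to invoke \cite[Theorem 3.4]{voigt1984factorization}; so a self-contained proof is out of scope here. Were one to reconstruct it, the natural approach is to adapt the Cohen--Hewitt factorization theorem from Banach algebras with bounded approximate identity to the Fréchet-algebra setting of $C^\infty_D(\mathbb{R}^n)$, exploiting the fact that this algebra admits a very well-behaved bounded approximate identity.

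The first step would be to construct a bounded approximate identity $(e_k)_{k\geq 1}\subset C^\infty_D(\mathbb{R}^n)$: pick bump functions supported in $D$ which equal $1$ on an exhaustion of the interior, possibly combined with a mollification so that $e_k \cdot x \to x$ for every $x \in C^\infty_D(\mathbb{R}^n)$ in every Fréchet seminorm, while $\{e_k\}_k$ remains bounded in all seminorms simultaneously. The second step is the Cohen construction, carried out in the unitization $\tilde{A} = C^\infty_D(\mathbb{R}^n) \oplus \mathbb{K}\cdot\mathbf{1}$: fix a summable sequence $\alpha_k > 0$ chosen with reference to the bounded set $B$, and define the partial products
\[
z_N \;=\; \prod_{k=1}^N \bigl(\alpha_k e_k + (1-\alpha_k)\mathbf{1}\bigr) \;\in\; \tilde{A}.
\]
For appropriate $\alpha_k$, the $z_N$ converge to an invertible $\tilde z\in\tilde A$ with non-unital part $z\in C^\infty_D(\mathbb{R}^n)$, and $T(x) := \tilde z^{-1} x$ is bounded on $B$. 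The approximating sequence $\phi_n$ is then extracted from the partial products applied to $x$, which are genuine multiplications in $C^\infty_D(\mathbb{R}^n)$.

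The main obstacle is the second step, specifically the simultaneous control of convergence in every Fréchet seminorm $\|\cdot\|_k$: in Cohen's original Banach setting a single estimate suffices, whereas here one must choose $\alpha_k$ so that both the infinite product $z_N\to\tilde z$ and the inverted sequence $z_N^{-1} x$ converge \emph{uniformly in $x\in B$ and in every seminorm}. This multi-seminorm bookkeeping, leveraging boundedness of $B$ in all seminorms together with a Banach--Steinhaus argument for Fréchet spaces, is where Voigt's contribution genuinely lies; the algebraic skeleton of the argument is otherwise that of Cohen--Hewitt.
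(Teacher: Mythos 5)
The paper offers no proof of this statement beyond the citation of Voigt, so your framing (invoke \cite[Theorem 3.4]{voigt1984factorization}, no self-contained argument) matches what the paper does. The only place the paper actually engages with Voigt's proof is the remark immediately after the theorem: the form stated here, with the sequence $\{\phi_n\}$ chosen \emph{independently} of $x \in B$, is strictly stronger than Voigt's printed definition of the strong factorization property and has to be extracted from the proof of \cite[Proposition 2.7]{voigt1984factorization}. Your sketch does not address this point at all, and it is the one nontrivial issue the paper cares about when citing the result.

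More seriously, your proposed reconstruction has a genuine gap: its first step is impossible. The Fr\'echet algebra $C^\infty_D(\mathbb{R}^n)$ has \emph{no} bounded approximate identity. Indeed, if $\{e_k\}_k$ were bounded in the Fr\'echet topology, then in particular $C := \sup_k \sup_x |\nabla e_k(x)| < \infty$; since each $e_k$ vanishes identically outside $D$, hence on $\partial D$, the mean value theorem gives $|e_k(x)| \le C \operatorname{dist}(x,\partial D)$ for every $x \in D$ and every $k$. On the other hand, $e_k \cdot f \to f$ for all $f \in C^\infty_D(\mathbb{R}^n)$ forces $e_k(x) \to 1$ at every interior point $x$ (test against some $f$ with $f(x)=1$), which is impossible at interior points with $\operatorname{dist}(x,\partial D) < \tfrac{1}{2C}$. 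So no family that is bounded in all seminorms simultaneously can serve as an approximate identity, the infinite product $\prod_{k}\bigl(\alpha_k e_k + (1-\alpha_k)\mathbf{1}\bigr)$ cannot be set up as you describe, and the difficulty is not ``multi-seminorm bookkeeping on top of the Cohen--Hewitt skeleton.'' This is precisely why Voigt's paper exists: it proves factorization for algebras such as $C^\infty_K(\mathbb{R}^n)$ which fail to have bounded approximate identities, by a construction that exploits the uniform flatness at $\partial D$ of the elements of a bounded set $B$ (the unbounded approximating sequence $\phi_n$, the factor $z$, and the operator $T$ are built from that flatness), rather than by adapting the Cohen--Hewitt iteration.
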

\begin{remark}
Note that in \cite[Def 1.1]{voigt1984factorization}, the definition of the strong factorization property is stated slightly different, and weaker in one particularly important aspect: They require that for every~$x \in B$, the element~$ T(x)$ must be contained in~$\overline{C^\infty_D(\mathbb{R}^n) \cdot x}$, the closed ideal generated by~$x$. 
This, too, would imply~$T(x) = \lim_{n \to \infty} \phi_n \cdot x$ for some~$\phi_n$, but the choice of~$\phi_n$ may depend on~$x$.
The stronger statement that the~$\phi_n$ can be chosen independent of~$x$ is also true, as seen in the proof \cite[Proposition 2.7]{voigt1984factorization} which is used to prove the above theorem. This goes unstated in our cited material, but is important not only for our application, but also for other users of this material such as \cite[Proposition 3.4]{ewald2004hochschild}, \cite[Theorem 7]{gong2004excision}, \cite[Theorem 6.1]{wodzicki1989excision}.
\end{remark}
\begin{corollary}
\label{CorollaryCompactlySupportedBarCycles}
For all~$f \in C^\infty_{D^k}((\mathbb{R}^n)^k)$, there are~$g \in C^\infty_D(\mathbb{R}^n)$,~$h \in C^\infty_{D^k}((\mathbb{R}^n)^k)$ with
\begin{align*}
f(x_1,\dots,x_k) = g(x_1) \cdot h(x_1,\dots,x_k) \quad \forall x_i \in D,
\end{align*}
so that if~$f$ in the above is a cycle in bar homology, so is~$h$.
\end{corollary}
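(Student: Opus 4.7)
The plan is to apply Voigt's bounded strong factorization (Theorem~\ref{TheoremVoigtFactorization}) after reinterpreting $f$ as a smooth one-parameter family in $A := C^\infty_D(\mathbb{R}^n)$, take $h$ to be the result of the operator $T$ supplied by that theorem, and then transfer the bar-cycle condition from $f$ to $h$ by approximating $T$ with the multiplications $M_{\phi_n}$.

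Via the nuclear tensor product identification $C^\infty_{D^k}((\mathbb{R}^n)^k) \cong A \hotimes C^\infty_{D^{k-1}}((\mathbb{R}^n)^{k-1})$, view $f$ as a smooth map
\[ F : D^{k-1} \to A, \qquad F(x_2, \dots, x_k)(x_1) := f(x_1, x_2, \dots, x_k). \]
Its image $B := F(D^{k-1}) \subset A$ is compact, hence bounded, and Theorem~\ref{TheoremVoigtFactorization} applied to $B$ (or to a suitable enlargement containing also the derivative images $F^{(\alpha)}(D^{k-1})$, see below) yields $g := z \in A$, a continuous linear operator $T : A \to A$, and a sequence $\{\phi_n\} \subset A$---independent of $x$, by the remark following Theorem~\ref{TheoremVoigtFactorization}---with $g \cdot T(x) = x$ and $T(x) = \lim_n \phi_n \cdot x$ for all $x$ in this bounded set. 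Define
\[ h(x_1, \dots, x_k) := T(F(x_2, \dots, x_k))(x_1), \]
equivalently $h = (T \hotimes \id)(f)$. The factorization $f(x_1, \dots, x_k) = g(x_1) h(x_1, \dots, x_k)$ is then immediate from $g \cdot T(x) = x$, while $h \in C^\infty_{D^k}((\mathbb{R}^n)^k)$ follows from $T$ being continuous and linear, preserving $C^\infty_D$-support, together with smoothness and $D^{k-1}$-support of $F$.

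For the bar-cycle claim, introduce the approximants $h_n := (M_{\phi_n} \hotimes \id)(f)$, where $M_{\phi_n}$ is multiplication by $\phi_n$; pointwise $h_n(x_1, \dots, x_k) = \phi_n(x_1) f(x_1, \dots, x_k)$. A direct inspection shows that multiplication by a function of $x_1$ commutes with every summand of the bar differential $b'$: those merging only positions $\geq 2$ commute trivially with $M_{\phi_n} \hotimes \id$, and the $i = 1$ summand $f \mapsto f(y_1, y_1, y_2, \dots)$ commutes because evaluating at $y_1$ before or after multiplying by $\phi_n$ gives the same answer, by associativity of pointwise multiplication. Hence $b'(h_n) = (M_{\phi_n} \hotimes \id)(b'(f)) = 0$ whenever $b'(f) = 0$. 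Continuity of $b'$ will then give $b'(h) = 0$, provided that $h_n \to h$ in $A^{\hotimes k}$.

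The main obstacle is exactly this last convergence, since Theorem~\ref{TheoremVoigtFactorization} only guarantees pointwise convergence of $M_{\phi_n}$ to $T$ on $B$. Under the identification $A^{\hotimes k} \cong C^\infty(D^{k-1}, A)$, the convergence $h_n \to h$ amounts to $(M_{\phi_n} - T) \circ F^{(\alpha)} \to 0$ uniformly on $D^{k-1}$ for every multi-index $\alpha$ in the parameters $x_2, \dots, x_k$. I would handle this by applying Theorem~\ref{TheoremVoigtFactorization} from the outset to a bounded set that contains each compact image $F^{(\alpha)}(D^{k-1})$ (enlarging it with the seminorm to be controlled), and by exploiting that $A$ is nuclear Fréchet (hence Montel), together with the fact that the $\phi_n$ in the construction of \cite{voigt1984factorization} can be chosen bounded; then $\{M_{\phi_n}\}$ is equicontinuous and its pointwise convergence on a compact set automatically lifts to uniform convergence, completing the proof.
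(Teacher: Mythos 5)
The framing differs from the paper's, and the cycle-transfer step has a genuine gap as stated. The paper does not view $f$ as an $A$-valued smooth map; instead it invokes the representation of $f \in C^\infty_{D^k}((\mathbb{R}^n)^k) \cong A^{\hotimes k}$ as an absolutely convergent series $f = \sum_n \lambda_n f_1^n \otimes \dots \otimes f_k^n$ with null sequences $\{f_i^n\}_n$ (\cite[Thm 45.1]{treves1967topological}), applies Theorem~\ref{TheoremVoigtFactorization} to the bounded set $\{f_1^n\}_n$, defines $h := \sum_n \lambda_n (Tf_1^n) \otimes \dots \otimes f_k^n$, and then argues termwise that $b'(h) = \lim_m \phi_m \cdot b'(f)$, using exactly your observation that $b'$ commutes with first-slot multiplication. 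That observation is correct in both settings and is the shared core of the cycle claim.

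The problem with your resolution of the convergence $h_m \to h$: it requires $(M_{\phi_m} - T)(F^{(\alpha)}(\mathbf{x})) \to 0$ uniformly in $\mathbf{x} \in D^{k-1}$ for \emph{every} multi-index $\alpha$, and hence pointwise convergence of $M_{\phi_m}$ to $T$ on the union $\bigcup_\alpha F^{(\alpha)}(D^{k-1})$. This union is a countable union of compacta and is in general \emph{not} bounded in a Fréchet space (derivatives of a smooth $A$-valued curve can grow without bound when measured in a fixed seminorm), so there is no single bounded set you can feed to Theorem~\ref{TheoremVoigtFactorization} that covers it. Your parenthetical about ``enlarging it with the seminorm to be controlled'' does not repair this: if you re-apply Voigt to a seminorm-dependent $B_q$, the operator $T$ and hence $h$ depend on $q$, whereas you need a single $T$ to make sense of the factorization and of $h$. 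Equicontinuity of $\{M_{\phi_m}\}$ (even granting that $\{\phi_m\}$ is bounded, which you assert but \cite{voigt1984factorization} does not explicitly state) only upgrades pointwise to uniform convergence on compacta where you already have pointwise convergence; it does not extend the pointwise convergence beyond the bounded set $B$, because $B$ is not a dense linear subspace.

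By contrast, the paper's series representation picks out a single null sequence $\{f_1^n\}$ of first-slot coefficients, which is precompact hence bounded, and Voigt then gives $\phi_m f_1^n \to T f_1^n$ for precisely those countably many elements --- which is exactly what the termwise argument (and the subsequent dominated-convergence-type swap of sum and limit) needs. If you want to salvage your function-valued picture, you would need to combine it with the Trèves series expansion of $F$ in the first tensor slot, at which point it collapses to the paper's proof.
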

\begin{proof}
Since~$C_{D^k}^\infty((\mathbb{R}^n)^k) \cong (C_D^\infty(\mathbb{R}^n))^{\hotimes^k}$, \cite[Thm 45.1]{treves1967topological} implies that we can represent every~$f \in C_{D^k}^\infty((\mathbb{R}^n)^k))$ as a certain series, i.e. there are null sequences
\begin{align*}
\{f_1^n\}_{n \geq 1},\dots,\{f_k^n\}_{n \geq 1} \subset C_{D}^\infty(\mathbb{R}^n)
\end{align*}
and a sequence of complex numbers~$\{\lambda_n\}_{n \geq 1}$ with~$\sum_{n=1}^\infty |\lambda_n| < 1$, so that
\begin{align*}
f = \sum_{n=1}^\infty \lambda_n f_1^n \otimes \dots \otimes f_k^n,
\end{align*}
where the sum is \emph{absolutely convergent} as a series of Fréchet space elements, meaning that for a generating sequence of seminorms~$\{p_i\}_{i \geq 0}$ of the topology of~$C^\infty_D(\mathbb{R}^n)$, the following real-valued series converges for all~$i_1,\dots,i_k \geq 0$:
\begin{align*}
\sum_{n=1}^\infty \lambda_n \cdot p_{i_1}(f_1^n) \dots p_{i_k}(f_k^n).
\end{align*}
Since the set~$\{f_1^n\}_{n \geq 1}$ is convergent, it is bounded, and we can apply Theorem~\ref{TheoremVoigtFactorization} to get a continuous operator~$T : C^\infty_D(\mathbb{R}^n) \to C^\infty_D(\mathbb{R}^n)$, a function~$g \in C^\infty_D(\mathbb{R}^n)$ and a sequence~$\{\phi_m\}_{m \geq 1} \in C^\infty_D(\mathbb{R}^n)$ with
\begin{align*}
f_1^n = g \cdot T f_1^n, \quad T f_1^n = \lim_{m \to \infty} \phi_m \cdot f_1^n \quad \forall n \in \mathbb{N}.
\end{align*}
By continuity of~$T$ the sequence
\begin{align*}
h := \sum_{n=1}^\infty \lambda_n (Tf_1^n) \otimes \dots \otimes f_k^n
\end{align*}
is also absolutely convergent and thus defines an element in~$C^\infty_{D^k}((\mathbb{R}^n)^k)$. Thus
\begin{align*}
f(x_1,\dots,x_n) = g(x_1) \cdot h(x_1,\dots,x_n) \quad \forall x_i \in \mathbb{R}^n.
\end{align*} 
Additionally, if~$f$ is a cycle in bar homology, then, since the bar differential~$b'$ is continuous and~$C^\infty_D(\mathbb{R}^n)$-linear with respect to multiplication in the first tensor argument, we have
\begin{align*}
\lim_{m \to \infty} b'((\phi_m f_1^n) \otimes \dots \otimes f_k^n) = \lim_{m \to \infty} \phi_m \cdot b'(f_1^n \otimes \dots \otimes f_k^n).
\end{align*}
But then~$b'(h) = \lim_{m \to \infty} \phi_m \cdot b'(f)$, so if~$f$ was a cycle in bar homology, so is~$h$. This concludes the proof.
\end{proof}
\begin{corollary}
\label{CorollaryBarComplexCompactAcyclic}
The bornological bar complex of~$C^\infty_D(\mathbb{R}^n)$ is acyclic.
\end{corollary}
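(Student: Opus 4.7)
The plan is to exhibit an explicit primitive for every bar cycle, mimicking the standard unital contracting homotopy $s(a_0 \otimes \dots \otimes a_n) := 1 \otimes a_0 \otimes \dots \otimes a_n$, but where the missing unit is replaced by the element $g$ supplied by Corollary \ref{CorollaryCompactlySupportedBarCycles}. The key calculation to set up is: for $g \in A := C^\infty_D(\mathbb{R}^n)$ and $h \in A^{\hotimes^k}$, expanding the definition of $b'$ on $A \hotimes A^{\hotimes^k} \cong A^{\hotimes^{k+1}}$ yields
\begin{align*}
b'(g \otimes h) \;=\; g \cdot h \;-\; g \otimes b'(h),
\end{align*}
where $g \cdot h \in A^{\hotimes^k}$ denotes the result of applying the continuous multiplication $A \hotimes A \to A$ to the first two tensor slots, i.e.\ the pointwise product in the first variable. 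The $i=0$ summand of $b'$ produces $g \cdot h$, and the remaining terms $i = 1, \dots, k-1$ reindex into $-g \otimes b'(h)$.

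First I would verify the formula above carefully with signs by a direct computation on elementary tensors and extend by continuity to $A^{\hotimes^{k+1}}$ (this uses the associativity isomorphism from Proposition \ref{prop:PropertiesOfTopTensorProds} iii to identify $A \hotimes A^{\hotimes^k}$ with $A^{\hotimes^{k+1}}$, and the fact that both multiplication in the first slot and $g \otimes (\cdot)$ are continuous). Then, given any bar cycle $f \in A^{\hotimes^k}$, Corollary \ref{CorollaryCompactlySupportedBarCycles} produces $g \in A$ and $h \in A^{\hotimes^k}$ with $f = g \cdot h$ and $b'(h) = 0$, so the formula immediately yields $f = b'(g \otimes h)$, exhibiting $f$ as a bar boundary.

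The bottom degree is handled by the same mechanism: an element $f \in A = A^{\hotimes^1}$ is a cycle for trivial reasons, and Corollary \ref{CorollaryCompactlySupportedBarCycles} at $k = 1$ reduces to Voigt's factorization (Theorem \ref{TheoremVoigtFactorization}) applied to the bounded singleton $\{f\}$, giving $f = g \cdot T(f) = b'(g \otimes T(f))$. Thus bar homology vanishes in every degree.

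The genuine work is entirely packaged into Corollary \ref{CorollaryCompactlySupportedBarCycles}: the main obstacle is to upgrade Voigt's bounded strong factorization (which a priori factors \emph{individual} elements or bounded sets) into a uniform factorization of an absolutely convergent series $h = \sum \lambda_n f_1^n \otimes \dots \otimes f_k^n$ of elementary tensors, with one common $g$ working for all tensor slices simultaneously and preserving the bar-cycle condition. Once that is in hand, the acyclicity proof collapses to the single-line identity above, and no further delicate topology (closed range, completion, etc.) enters.
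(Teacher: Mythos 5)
Your proposal is correct and is essentially the paper's own argument: both rest on the identity $b'(g \otimes h) = g \cdot h - g \otimes b'(h)$ combined with the factorization $f = g \cdot h$ (with $h$ again a bar cycle) from Corollary~\ref{CorollaryCompactlySupportedBarCycles}, so every cycle becomes the boundary $b'(g \otimes h)$. Your explicit sign check and the separate treatment of the bottom degree via Voigt's factorization are fine refinements of the same one-line proof given in the paper.
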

\begin{proof}
In the notation of Corollary~\ref{CorollaryCompactlySupportedBarCycles}, every bar chain~$f \in C_k^{\barr}(C^\infty_D(\mathbb{R}^n))$ fulfils~$
f = b'(g \otimes h) + g \otimes b'(h)$, with~$b'(f) = 0$ implying~$b'(h) = 0$. This shows the statement.
\end{proof}
\begin{proposition}
\label{PropositionFlatHochschild}
In the notation of Appendix~\ref{AppendixFlatDeRhamPalamodov}, consider the space
\begin{align*}
\Omega^\bullet_\fl(D, \partial D) := \{ \omega \in \Omega^\bullet(D) : (j^\infty \omega) \att_{\partial D} = 0 \} \subset \Omega^\bullet(D).
\end{align*}
If~$\Omega^\bullet_\fl(D, \partial D)$ is equipped with the zero differential, then the quasi-isomorphism on the bornological Hochschild complex
\begin{align*}
HC_\bullet^\born(C^\infty(D)) \to \Omega^\bullet(D)
\end{align*}
from Theorem~\ref{TheoremHochschildHomologyOfSmoothFunctions} restricts to a quasi-isomorphism 
\begin{align*}
HC_\bullet^\born(C^\infty_D(\mathbb{R}^n)) \to \Omega^\bullet_\fl(D, \partial D).
\end{align*}
\end{proposition}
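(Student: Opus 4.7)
First I would verify the map is well-defined on the restricted complex: if $f_0, \ldots, f_n \in C^\infty_D(\mathbb{R}^n)$, each $f_i$ extends by zero outside $D$, so all derivatives of $f_0\, d_\dR f_1 \wedge \cdots \wedge d_\dR f_n$ vanish outside $\mathrm{int}(D)$ and, by continuity, on $\partial D$; hence the form is $\infty$-flat at $\partial D$.

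For the quasi-isomorphism itself, the plan is an excision-based comparison with Theorem \ref{TheoremHochschildHomologyOfSmoothFunctions}. The inclusion fits into a short exact sequence of nuclear Fréchet algebras
\begin{align*}
0 \to C^\infty_D(\mathbb{R}^n) \to C^\infty(D) \to J \to 0,
\end{align*}
where $J$ denotes the quotient algebra of Whitney $\infty$-jets at $\partial D$. By Proposition \ref{prop:ProjTensorProductExact}, tensoring preserves exactness, so we obtain a short exact sequence of bornological Hochschild chain complexes. Topological $H$-unitality of $C^\infty_D(\mathbb{R}^n)$ (encoded in Corollary \ref{CorollaryBarComplexCompactAcyclic}) then produces, via a Wodzicki-style excision argument, a long exact sequence
\begin{align*}
\cdots \to HH_n^\born(C^\infty_D(\mathbb{R}^n)) \to HH_n^\born(C^\infty(D)) \to HH_n^\born(J) \to HH_{n-1}^\born(C^\infty_D(\mathbb{R}^n)) \to \cdots.
\end{align*}

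On the form side, the parallel short exact sequence
\begin{align*}
0 \to \Omega^\bullet_\fl(D, \partial D) \to \Omega^\bullet(D) \to \Omega^\bullet(D)/\Omega^\bullet_\fl(D, \partial D) \to 0,
\end{align*}
with zero differential, yields trivially a long exact sequence, which is intertwined with the Hochschild one by the natural HKR chain map. Theorem \ref{TheoremHochschildHomologyOfSmoothFunctions} provides the quasi-isomorphism in the middle column, so a five-lemma argument concludes the proof provided one can identify $HH_\bullet^\born(J) \cong \Omega^\bullet(D)/\Omega^\bullet_\fl(D, \partial D)$.

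The main obstacle is precisely this identification for the jet algebra $J$. One option is to compute $HH_\bullet^\born(J)$ directly by working in a tubular neighborhood of $\partial D$, where $J$ is of the form of formal power series in the normal direction over $C^\infty(\partial D)$, and then applying an HKR-type argument there. Alternatively one can sidestep the quotient entirely: by $H$-unitality, $HH^\born_\bullet(C^\infty_D(\mathbb{R}^n))$ agrees with the relative Hochschild homology $HH^\born_\bullet(C^\infty(D), C^\infty_D(\mathbb{R}^n))$, and one then restricts Connes' chain homotopy inverse to the ideal while tracking the support condition by hand. Either route requires careful bookkeeping with the differential structure transverse to $\partial D$, which I expect to be the delicate heart of the argument.
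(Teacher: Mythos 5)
Your overall strategy (excision against the jet algebra plus a five-lemma comparison with Theorem~\ref{TheoremHochschildHomologyOfSmoothFunctions}) is a legitimate alternative to what the paper does, but as written it has a genuine gap: the identification $HH_\bullet^\born(J)\cong \Omega^\bullet(D)/\Omega^\bullet_\fl(D,\partial D)$, which you yourself flag as ``the main obstacle,'' is never established, and without it the five-lemma step does not close. This is not bookkeeping: computing the bornological Hochschild homology of the Whitney jet algebra along $\partial D$ and showing it is given by jets of forms is a result of essentially the same depth as the proposition you are trying to prove (it is the content of work of Brasselet--Pflaum on Whitney functions), so deferring it leaves the proof incomplete rather than merely unpolished. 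A second, smaller inaccuracy: Proposition~\ref{prop:ProjTensorProductExact} does not give you a short exact sequence of Hochschild \emph{complexes}. The kernel of $C^\infty(D)^{\hotimes k}\to J^{\hotimes k}$ is strictly larger than $C^\infty_D(\mathbb{R}^n)^{\hotimes k}$, so the quotient complex is not $HC_\bullet^\born(J)$; identifying the homology of the relative complex with $HH_\bullet^\born(J)$ is precisely what excision (using the bornological $H$-unitality of Corollary~\ref{CorollaryBarComplexCompactAcyclic} and a bornological version of Wodzicki's theorem, e.g.\ in the spirit of the reference cited in Section~\ref{SubsectionHochschildAndCyclicOfCompactly}) buys you, so the sentence invoking Proposition~\ref{prop:ProjTensorProductExact} should be replaced by an appeal to that excision theorem from the start.

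For comparison, the paper avoids the jet algebra entirely: it invokes the localization technique of Brasselet--Teleman, observing that the bornological Hochschild complex of the ideal $C^\infty_D(\mathbb{R}^n)\subset C^\infty(D\setminus\partial D)$ is locally isomorphic to that of $C^\infty(D\setminus\partial D)$, and their localization result then identifies $HH_\bullet^\born(C^\infty_D(\mathbb{R}^n))$ with the forms generated by functions in the ideal, which is exactly $\Omega^\bullet_\fl(D,\partial D)$. If you want to keep your excision route, the missing computation of $HH_\bullet^\born(J)$ (say via the Borel-type isomorphism of $J$ with formal power series in the normal variable over $C^\infty(\partial D)$ and a K\"unneth/HKR argument for that completed tensor product) must be carried out in full; otherwise the localization argument is the shorter path. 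Your preliminary check that the HKR map lands in $\Omega^\bullet_\fl(D,\partial D)$ is correct.
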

\begin{proof}
In the sense of \cite[Section 2]{brasselet2009teleman}, the bornological Hochschild complex of the subalgebra~$C^\infty_D(\mathbb{R}^n) \subset C^\infty(D \setminus \partial D)$ is \emph{locally isomorphic} to the bornological Hochschild complex of~$C^\infty(D \setminus \partial D)$. But then \cite[Prop 2.2]{brasselet2009teleman} shows that the bornological Hochschild homology of~$C^\infty_D(\mathbb{R}^n)$ is, via the above morphism, isomorphic to the subalgebra of differential forms of~$\Omega^\bullet(D \setminus \partial D)$ generated by the functions in~$C^\infty_D(\mathbb{R}^n)$. But this is exactly~$\Omega^\bullet_\fl(D, \partial D)$.
\end{proof}
We investigate this flat de Rham complex more in Appendix~\ref{AppendixFlatDeRhamPalamodov}. From the results there, we conclude the following:
\begin{theorem}
\label{TheoremContinuousCyclicHomologyOfCinftyD}
The differential of the bornological cyclic complex of~$C^\infty_D(\mathbb{R}^n)$ has closed range and
\begin{align*}
H^{\lambda,\born}_k(C^\infty_D(\mathbb{R}^n)) 
= 
\frac{\Omega^k_\fl(D, \partial D)}{d_\dR \Omega^{k-1}_\fl(D, \partial D)}
\oplus 
H^{k-2}_\sing(D, \partial D) 
\oplus 
H^{k-4}_\sing(D, \partial D) 
\oplus 
\dots
\end{align*}
where the relative homology denotes relative singular homology.
\end{theorem}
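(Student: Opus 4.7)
The plan is to mirror the proof of Theorem \ref{thm:CyclicHomologyClosedRangeForSmoothFunctions}, replacing $C^\infty(M)$ by $A := C^\infty_D(\mathbb{R}^n)$. Since $A$ is nonunital, the $(b,B)$-complex is no longer available, and I would instead work directly with the cyclic bicomplex $CC^\born_{\bullet,\bullet}(A)$. The essential inputs are the identification of bornological Hochschild homology with $\Omega^\bullet_\fl(D,\partial D)$ from Proposition \ref{PropositionFlatHochschild}, the bornological H-unitality encoded in Corollary \ref{CorollaryBarComplexCompactAcyclic}, and the appendix's results on the flat de Rham complex.

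First, I would filter $CC^\born_{\bullet,\bullet}(A)$ by columns and analyse the resulting spectral sequence. By Corollary \ref{CorollaryBarComplexCompactAcyclic}, the odd columns (with differential $-b'$) are acyclic, so $E_1^{p,q} = 0$ for odd $p$, whereas the even columns yield $E_1^{2k,q} \cong \Omega^q_\fl(D,\partial D)$ by Proposition \ref{PropositionFlatHochschild}. Every $d_1$ has source or target in an odd column, so $d_1 = 0$ and $E_2 = E_1$. The $d_2$-differential is the secondary operation obtained by lifting a Hochschild cycle via $N$, across the acyclic bar column (using Corollary \ref{CorollaryCompactlySupportedBarCycles}), and then applying $1-\tau$; as in the unital case this corresponds to the Connes operator $B$, which via the Hochschild-to-forms isomorphism matches the flat de Rham differential $d_\dR$. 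Consequently $E_3^{2k,q}$ is the cohomology of $(\Omega^\bullet_\fl(D,\partial D), d_\dR)$, which the appendix identifies with the relative singular cohomology $H^\bullet_\sing(D,\partial D)$; all higher differentials vanish and the spectral sequence degenerates at $E_3$.

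Second, to upgrade the associated graded description into the direct sum appearing in the statement, I would construct an explicit continuous chain map from $CC^\born_\bullet(A)$ to the total complex of the ``staircase'' bicomplex having $\Omega^{\ell-k}_\fl(D,\partial D)$ at position $(k,\ell)$ with $\ell \geq k$, zero vertical differentials and $d_\dR$ horizontal. This map is the Hochschild-to-forms morphism of Proposition \ref{PropositionFlatHochschild} on even columns and zero on odd columns; the spectral sequence comparison theorem (applied to the column filtrations of both sides) shows that it is a continuous quasi-isomorphism. A direct computation of the homology of the staircase total complex then produces the claimed direct sum: at the edge position $k=0$ one obtains the cokernel $\Omega^n_\fl / d_\dR \Omega^{n-1}_\fl$ since there is no outgoing differential, while at positions $k \geq 1$ one obtains the full de Rham cohomology, i.e.\ $H^{n-2k}_\sing(D,\partial D)$. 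The closed range property of $d_\dR$ on $\Omega^\bullet_\fl(D,\partial D)$ provided by the appendix guarantees that the total differential of the staircase complex has closed range, and Proposition \ref{PropQuasiIsoClosedRange}(i) then transfers closed range back to $CC^\born_\bullet(A)$; Corollary \ref{CorollaryAllCyclicComplexesAreSimultaneouslyClosed}(i) finally promotes this to the Connes complex $C^{\lambda,\born}_\bullet(A)$.

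The main technical obstacle is verifying that the $d_2$ differential coincides with $d_\dR$ under the identification of Proposition \ref{PropositionFlatHochschild}, or equivalently that the comparison map to the staircase bicomplex is a well-defined continuous chain map in the nonunital bornological setting. The delicate point is that the secondary operation on $E_2$ is only defined after choosing a continuous preimage under $b'$ of a bar-cycle, and here the sharpened version of Voigt's factorization recorded in Theorem \ref{TheoremVoigtFactorization} and its consequence Corollary \ref{CorollaryCompactlySupportedBarCycles} are essential: they supply a continuous operator that produces the required lift uniformly. Once this compatibility is in place, the rest of the argument is formal and parallels the unital case.
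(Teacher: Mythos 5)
Your first step (the column filtration of $CC^\born_{\bullet,\bullet}(C^\infty_D(\mathbb{R}^n))$, vanishing odd columns via Corollary~\ref{CorollaryBarComplexCompactAcyclic}, $E_1^{2k,q}\cong\Omega^q_\fl(D,\partial D)$ via Proposition~\ref{PropositionFlatHochschild}, and the identification of $d_2$ with $d_\dR$ through the Voigt-based lifts of Corollary~\ref{CorollaryCompactlySupportedBarCycles}) is exactly the paper's argument. The gap is in the second step. The map you propose into the ``staircase'' bicomplex --- the Hochschild-to-forms map $\mu$ on even columns, zero on odd columns, with target entries the full spaces $\Omega^{\ell-k}_\fl(D,\partial D)$ and horizontal differential $d_\dR$ --- is not a chain map, and no amount of care with the bornological/nonunital subtleties will make it one. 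Two components fail on the nose: for $x$ in an odd column your map sends $x$ to $0$ but sends its horizontal image to $\mu\bigl((1-\tau)x\bigr)$, which is only \emph{exact}, not zero (already $\mu\bigl((1-\tau)(a_0\otimes a_1)\bigr)=a_0\,d_\dR a_1+a_1\,d_\dR a_0=d_\dR(a_0a_1)$); and for $y$ in an even column the target differential produces $d_\dR\,\mu(y)\neq 0$ in general, while $\Phi(b y+N y)=\mu(by)=0$ because $\mu$ kills the image of $b$. In the unital setting the analogous comparison with the $(b,B)$-picture works because the extra degeneracy $s$ furnishes nonzero components on the odd columns; here $s$ is unavailable, and replacing it by the Voigt-factorization homotopy is not what you wrote and would be a substantially harder construction.

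The paper's resolution is to change the target, not the map: one composes $\mu$ with the quotient $\Omega^\bullet_\fl(D,\partial D)\to\Omega^\bullet_\fl(D,\partial D)/d_\dR\Omega^{\bullet-1}_\fl(D,\partial D)$ (continuous because $d_\dR$ has closed range, Lemma~\ref{LemmaFlatDeRhamClosedAndComplemented}) and maps into the double complex~\eqref{DoubleComplexFlatDeRhamForCyclic}, whose entries are these quotients in column $0$ and the relative singular cohomologies $H^\bullet_\sing(D,\partial D)$ (each of which is $0$ or such a quotient) in the other even columns, with \emph{all differentials zero}. Then the only conditions to check are $(\mathrm{quot}\circ\mu)\circ(1-\tau)=0$ and $(\mathrm{quot}\circ\mu)\circ b=0$, which hold precisely because exact forms are killed; your spectral-sequence computation then shows the map is an isomorphism on the third pages, the comparison theorem gives the quasi-isomorphism of total complexes, and since the target differential is zero (hence trivially of closed range) Proposition~\ref{PropQuasiIsoClosedRange} yields the closed-range claim, with the direct-sum description read off from the target. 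So your overall route is the right one, but the comparison complex must carry the quotients by exact forms (and zero differentials) rather than the forms themselves; as stated, your staircase map does not exist.
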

\begin{proof}
By Lemma~\ref{LemmaFlatDeRhamClosedAndComplemented}, the differential of~$\Omega^\bullet_\fl(D, \partial D)$ has closed range, thus the quotient map 
\begin{align}
\label{eq:QuotientMapInThmContinuousCyclicHom}
\Omega^\bullet_\fl(D, \partial D) \to \tfrac{\Omega^\bullet_\fl(D, \partial D)}{d \Omega^\bullet_\fl(D, \partial D)[-1]}
\end{align}
is a continuous map of Fréchet spaces. 
\\
Composing the quasi-isomorphism~$HC_\bullet^\born(C^\infty_D (\mathbb{R}^n)) \to \Omega^\bullet_\fl(D, \partial D)$ from Pro\-po\-si\-tion~\ref{PropositionFlatHochschild} with this quotient map allows one to construct a continuous map of double complexes from~$CC_{\bullet,\bullet}^\born(C^\infty_D(\mathbb{R}^n))$ to the following:
\begin{equation}
\label{DoubleComplexFlatDeRhamForCyclic}
\begin{tikzcd}
	\vdots & \vdots & \vdots & \vdots & \vdots & {\reflectbox{$\ddots$}} \\
	{\frac{\Omega_{\text{flat}}^2 (D, \partial D)}{d_{\dR} \Omega_{\text{flat}}^1(D, \partial D)}} & 0 & {H_\sing^2(D, \partial D)} & 0 & {H_\sing^2(D, \partial D)} & \dots \\
	{\frac{\Omega_{\text{flat}}^1 (D, \partial D)}{d_{\dR} \Omega_{\text{flat}}^0(D, \partial D)}} & 0 & {H_\sing^1(D, \partial D)} & 0 & {H_\sing^1(D, \partial D)} & \dots \\
	{\Omega_{\text{flat}}^0 (D, \partial D)} & 0 & {H_\sing^0(D, \partial D)} & 0 & {H_\sing^0(D, \partial D)} & \dots
	\arrow[from=3-2, to=3-1]
	\arrow[from=4-2, to=4-1]
	\arrow[from=2-2, to=2-1]
	\arrow[from=2-4, to=2-3]
	\arrow[from=3-4, to=3-3]
	\arrow[from=4-4, to=4-3]
	\arrow[from=2-3, to=2-2]
	\arrow[from=3-3, to=3-2]
	\arrow[from=4-3, to=4-2]
	\arrow[from=2-5, to=2-4]
	\arrow[from=3-5, to=3-4]
	\arrow[from=4-5, to=4-4]
	\arrow[from=4-6, to=4-5]
	\arrow[from=3-6, to=3-5]
	\arrow[from=2-6, to=2-5]
	\arrow[from=1-1, to=2-1]
	\arrow[from=2-1, to=3-1]
	\arrow[from=2-2, to=3-2]
	\arrow[from=3-2, to=4-2]
	\arrow[from=2-3, to=3-3]
	\arrow[from=3-3, to=4-3]
	\arrow[from=3-4, to=4-4]
	\arrow[from=2-4, to=3-4]
	\arrow[from=1-4, to=2-4]
	\arrow[from=1-3, to=2-3]
	\arrow[from=1-2, to=2-2]
	\arrow[from=3-1, to=4-1]
	\arrow[from=2-5, to=3-5]
	\arrow[from=3-5, to=4-5]
	\arrow[from=1-5, to=2-5]
\end{tikzcd}
\end{equation}
All differentials in \eqref{DoubleComplexFlatDeRhamForCyclic} are set to zero. By Lemma~\ref{LemmaFlatDeRhamClosedAndComplemented},~$H^k_\sing(D, \partial D)$ is either zero or equal to~$\frac{\Omega_\fl^n(D, \partial D)}{d_\dR \Omega_\fl^{n-1}(D, \partial D)}$ for all~$k \geq 0$, so the zero map and the quotient map~\eqref{eq:QuotientMapInThmContinuousCyclicHom} suffice to construct the indicated map of double complexes.
Calculate the spectral sequence of~$C_{\bullet,\bullet}^\born(C^\infty_D(\mathbb{R}^n))$ arising from filtration by columns. By Proposition~\ref{PropositionFlatHochschild} and Corollary~\ref{CorollaryBarComplexCompactAcyclic} the first page~$E^{\bullet,\bullet}_1$ contains~$\Omega_\fl^\bullet(D, \partial D)$ in even-numbered columns and~$0$ in odd-numbered columns, so we have~$E^{\bullet,\bullet}_1 = E^{\bullet,\bullet}_2$.
By unravelling the connecting homomorphisms using the homotopy equation in the proof of Corollary~\ref{CorollaryBarComplexCompactAcyclic} and the construction of the elements in the equation from Corollary~\ref{CorollaryCompactlySupportedBarCycles}, one can explicitly spell out the differential on the second page, showing that the nontrivial differentials~$\Omega_\fl^\bullet(D, \partial D) \to \Omega_\fl^{\bullet + 1}(D, \partial D)$ are equal to the de Rham differential. 
The cohomology of this flat, relative de Rham complex is equal to relative singular homology, see Lemma~\ref{LemmaFlatDeRhamClosedAndComplemented}. Hence the third page of the spectral sequence is exactly equal to the double complex \eqref{DoubleComplexFlatDeRhamForCyclic}. Hence the map of double complexes we constructed is an isomorphism between the third pages of the spectral sequences. 
By the spectral sequence comparison theorem \cite[Theorem 5.2.12]{weibel1995introduction}, this implies that the original map of double complexes is a quasi-isomorphism of the total complexes. This calculates the bornological cyclic homology as stated. 
Lastly, since the total differential of the double complex \eqref{DoubleComplexFlatDeRhamForCyclic} is zero, it is closed, and hence the differential of the total complex~$CC_{\bullet}^\born(C^\infty_D(\mathbb{R}^n))$ has closed range by Proposition~\ref{PropQuasiIsoClosedRange}. This concludes the proof.
\end{proof}
\begin{corollary}
\label{cor:CyclicHomologyOfCompactlySupported}
We have for all~$k \geq 0$
\begin{align*}
H_\bullet^{\lambda, \born}(C^\infty_c(\mathbb{R}^n)) 
\cong
\frac{\Omega^k_c(\mathbb{R}^n) }{d_\dR \Omega^{k-1}_c(\mathbb{R}^n)}
\oplus 
H^{k-2}_{\dR, c}(\mathbb{R}^n) 
\oplus 
H^{k-4}_{\dR, c}(\mathbb{R}^n) 
\oplus 
\dots
\end{align*}
where~$H^\bullet_{\dR, c}(\mathbb{R}^n)$ denotes compactly supported de Rham cohomology of~$\mathbb{R}^n$.
\end{corollary}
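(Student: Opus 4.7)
The plan is to deduce the corollary from Theorem~\ref{TheoremContinuousCyclicHomologyOfCinftyD} by passing to the direct limit along the defining system $C^\infty_c(\mathbb{R}^n) = \varinjlim_{r > 0} C^\infty_{\bar D_r(0)}(\mathbb{R}^n)$. By Proposition~\ref{prop:PropertiesOfTopTensorProds}(ii) the completed tensor product commutes with this strict LF colimit, so the bornological cyclic complex $CC^\born_\bullet(C^\infty_c(\mathbb{R}^n))$ is the filtered colimit of the complexes $CC^\born_\bullet(C^\infty_{\bar D_r(0)}(\mathbb{R}^n))$. Since filtered colimits of vector spaces are exact and in particular commute with taking homology, the same identification persists on bornological cyclic homology.

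For each $r > 0$, the rescaling diffeomorphism of $\mathbb{R}^n$ identifies $C^\infty_{\bar D_r(0)}(\mathbb{R}^n)$ with $C^\infty_D(\mathbb{R}^n)$ as Fréchet algebras, so Theorem~\ref{TheoremContinuousCyclicHomologyOfCinftyD} gives $H^{\lambda,\born}_k(C^\infty_{\bar D_r(0)}(\mathbb{R}^n))$ in terms of $\Omega^\bullet_\fl(\bar D_r(0), \partial \bar D_r(0))$ and $H^\bullet_\sing(\bar D_r(0), \partial \bar D_r(0))$. Moreover, the inclusions $C^\infty_{\bar D_r(0)}(\mathbb{R}^n) \hookrightarrow C^\infty_{\bar D_{r'}(0)}(\mathbb{R}^n)$ for $r < r'$ induce, via the naturality of the quasi-isomorphism in Proposition~\ref{PropositionFlatHochschild}, exactly the extension-by-zero maps on the form side.

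It then remains to identify the direct limits of these pieces. The extension-by-zero maps yield $\varinjlim_r \Omega^k_\fl(\bar D_r(0), \partial \bar D_r(0)) = \Omega^k_c(\mathbb{R}^n)$: every compactly supported smooth $k$-form on $\mathbb{R}^n$ has support in some $\bar D_r(0)$ and, extended by zero, vanishes to infinite order on $\partial \bar D_{r'}(0)$ for all strictly larger $r'$, while conversely every flat form is a compactly supported form on $\mathbb{R}^n$. Exactness of the filtered colimit then gives
\begin{equation*}
\varinjlim_r \tfrac{\Omega^k_\fl(\bar D_r(0), \partial \bar D_r(0))}{d_\dR \Omega^{k-1}_\fl(\bar D_r(0), \partial \bar D_r(0))} \cong \tfrac{\Omega^k_c(\mathbb{R}^n)}{d_\dR \Omega^{k-1}_c(\mathbb{R}^n)}.
\end{equation*}
For the relative singular cohomology summands, each $H^k_\sing(\bar D_r(0), \partial \bar D_r(0))$ equals $\mathbb{K}$ for $k = n$ and vanishes otherwise, and the filtered colimit along the inclusions reproduces $H^k_{\dR, c}(\mathbb{R}^n)$; assembling everything gives the claimed formula.

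The main subtlety is to verify that the connecting maps induced on the singular cohomology summands are indeed the ones that compute compactly supported de Rham cohomology in the limit; this reduces to naturality of the spectral sequence argument used in the proof of Theorem~\ref{TheoremContinuousCyclicHomologyOfCinftyD} under the algebra inclusions, where both the flat de Rham identification and the comparison with relative singular cohomology via Lemma~\ref{LemmaFlatDeRhamClosedAndComplemented} are manifestly compatible with extension-by-zero.
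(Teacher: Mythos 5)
Your proposal is correct and follows essentially the same route as the paper: both deduce the statement from Theorem~\ref{TheoremContinuousCyclicHomologyOfCinftyD} by passing to the strict direct limit $C^\infty_c(\mathbb{R}^n) = \varinjlim C^\infty_{\bar D_r(0)}(\mathbb{R}^n)$, using that homology, the cyclic structure and $\hotimes$ commute with strict inductive limits, and identifying $\varinjlim \Omega^k_\fl(\bar D_r(0),\partial \bar D_r(0))/d_\dR\Omega^{k-1}_\fl$ with $\Omega^k_c(\mathbb{R}^n)/d_\dR\Omega^{k-1}_c(\mathbb{R}^n)$. You merely spell out some details (naturality of the quasi-isomorphisms, the behaviour of the relative singular cohomology summands under the connecting maps) that the paper leaves implicit.
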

\begin{proof}
This follows from Theorem~\ref{TheoremContinuousCyclicHomologyOfCinftyD} by taking the direct limit~$C^\infty_c(\mathbb{R}^n) = \varinjlim C^\infty_{\overline{D}_r(0)}(\mathbb{R}^n)$, since homology, the cyclic action, and the bornological tensor product commute with strict direct limits, and
\begin{align*}
\frac{\Omega^k_c(\mathbb{R}^n) }{d_\dR \Omega^{k-1}_c(\mathbb{R}^n)}
=
\varinjlim
\frac{\Omega^k_\fl(\bar{D}_r, \partial \bar{D}_r(0))}{d_\dR \Omega^{k-1}_\fl(\bar{D}_r(0), \partial \bar{D}_r(0))}.
\end{align*}
\end{proof}
\begin{remark}
\label{RemarkCyclicHomologyOfDirectSum}
In preparation for what follows, we want to note that the above methods generalizes to finite, topological direct sums of copies of~$C^\infty_D(\mathbb{R}^n)$, meaning
\begin{align*}
H_\bullet^{\barr, \born}\left(\bigoplus_{i=1}^r C^\infty_D(\mathbb{R}^n) \right)  = 0,\quad
H_k^{\lambda,\born} \left(\bigoplus_{i=1}^r C^\infty_D(\mathbb{R}^n) \right) 
\cong
\bigoplus_{i=1}^r H_k^\lambda \left(C^\infty_D(\mathbb{R}^n) \right).
\end{align*}
\end{remark}
\section{Loday-Quillen-Tsygan theorems for Fréchet algebras}
In this section, we want to extend the classical Loday-Quillen-Tsygan theorem to the setting of certain topological algebras. 
For a detailled exposition of the algebraic Loday-Quillen-Tsygan theorem, we direct the reader to \cite[Chapter 9 \& 10]{loday1998cyclic}, or, for an abridged version of the relevant details, Appendix~\ref{AppendixAlgebraicLodayQuillen}.
\label{SectionLodayQuillenFrechet}
\subsection{A general topological LQT-Theorem}
\begin{definition}
Let~$A$ be a topological algebra and~$n \in \mathbb{N}$. Consider the topological Lie algebra
\begin{align*}
\gl_n(A) := \gl_n(\mathbb{K}) \hotimes A
\end{align*}
whose Lie bracket is given by
\begin{align*}
[g \otimes a, h \otimes b] := [g,h]_{\gl_n(\mathbb{K})} \otimes (a \cdot b) \quad \forall g,h \in \gl(\mathbb{K}), \, a,b \in A.
\end{align*}
For~$m \geq n$, the inclusions~$\gl_n(A) \to \gl_m(A)$ define a direct system of Lie algebras, and we can define
\begin{align*}
\gl(A) := \gl_\infty(A) := \varinjlim \gl_n(A)
\end{align*}
\end{definition}
\begin{remark}
Note that all~$\gl_n(A)$ for~$1 \leq n < \infty$ are finite-dimensional, so we can equivalently write
\begin{align*}
\gl_n(\mathbb{K}) \hotimes A
=
\gl_n(\mathbb{K}) \otimes_\beta A
=
\gl_n(\mathbb{K}) \otimes A.
\end{align*}
If~$A$ is Fréchet, then all~$\gl_n(\mathbb{K}) \otimes A$ are Fréchet, so~$\gl(A)$ is a strict LF-space. 
In this case, since the bornological tensor product on strict LF-spaces is compatible with inductive limits, we have 
\begin{align*}
\gl(A) = \varinjlim (\gl_n(\mathbb{K}) \hotimes A) \cong \gl(\mathbb{K}) \hotimes A.
\end{align*}
\end{remark}
\begin{definition}
Let~$\mathfrak{g}$ be a topological Lie algebra. We define \emph{bornological Lie algebra homology} of this~$\mathfrak{g}$ to be the homology of the \emph{bornological Chevalley-Eilenberg chain complex}
\begin{align*}
C^\born_\bullet(\mathfrak{g})
:=
\left(
\mathbb{K}
\stackrel{0}{\leftarrow}
\hat{\Lambda}^1 \mathfrak{g}
\stackrel{d}{\leftarrow}
\hat{\Lambda}^2 \mathfrak{g}
\stackrel{d}{\leftarrow}
\dots \right),
\end{align*} where~$\Lambda^k \mathfrak{g}$ denotes the coinvariants of~$\otimes^k \mathfrak{g}$ with respect to the action of the symmetric group~$\Sigma_k$ by antisymmetrization, and the hat denotes completion in the bornological tensor product.
The differential is the extension of the Chevalley-Eilenberg differential to the completion:
\begin{align*}
d (g_1 \wedge \dots \wedge g_n) := \sum_{i < j} (-1)^{i + j -1} [g_i, g_j] \wedge g_1 \wedge \dotsb \hat{g}_i \dotsb \hat{g}_j \dotsb \wedge g_n
\quad \forall g_i \in \mathfrak{g}.
\end{align*}
\end{definition}
%
%
%
\begin{proposition}
\label{prop:ReduceToCoinvariantsPossible}
For every~$n \in \mathbb{N}$ and topological algebra~$A$, the space~$\gl_n(A)$ admits an action by~$\gl_n(\mathbb{K})$. If~$A$ is unital, then the reduction to coinvariants
\begin{align*}
C_\bullet^\born(\gl_n(A)) \to
C_\bullet^\born(\gl_n(A))_{\gl_n(\mathbb{K})}
\end{align*}
is a quasi-isomorphism. 
\end{proposition}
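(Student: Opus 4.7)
The approach follows the classical algebraic argument (see \cite[Section 10.1]{loday1998cyclic}) while tracking topological properties. Since $A$ is unital, $\gl_n(\mathbb{K})$ embeds into $\gl_n(A)$ via $g \mapsto g \otimes 1_A$, and its adjoint action on $\gl_n(A)$ extends by the Leibniz rule to a $\gl_n(\mathbb{K})$-action on $\hat{\Lambda}^\bullet \gl_n(A)$ that commutes with the Chevalley-Eilenberg differential. Consequently, the coinvariants $C_\bullet^\born(\gl_n(A))_{\gl_n(\mathbb{K})}$ inherit a differential and the quotient map in question is a chain map; the task is to show it is a quasi-isomorphism.

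The key structural observation is that the $\gl_n(\mathbb{K})$-action on $\hat{\Lambda}^k \gl_n(A)$ factors through its action on the finite-dimensional factor $\otimes^k \gl_n(\mathbb{K})$. Since $\gl_n(\mathbb{K})$ is reductive, this finite-dimensional representation decomposes into a direct sum of isotypic components, and after tensoring with $A^{\hotimes k}$ and passing to $\Sigma_k$-coinvariants one obtains a topological direct sum decomposition
\begin{align*}
\hat{\Lambda}^k \gl_n(A) = \bigoplus_{\lambda} \bigl(\hat{\Lambda}^k \gl_n(A)\bigr)_\lambda,
\end{align*}
with each summand a topological direct summand: the projections onto isotypic components are finite-rank operators on the $\gl_n(\mathbb{K})$-tensor factor, tensored with the identity on the $A$-factors, and are continuous by the functoriality and associativity of $\hotimes$ (Proposition~\ref{prop:PropertiesOfTopTensorProds}). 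Reductivity of $\gl_n(\mathbb{K})$ further ensures that the trivial isotypic component coincides both with the $\gl_n(\mathbb{K})$-invariants and with the $\gl_n(\mathbb{K})$-coinvariants, so the quotient in the statement is nothing but the projection onto the trivial component.

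It then suffices to show that each non-trivial isotypic subcomplex is acyclic. For this I invoke the Casimir element $C$ of the semisimple part $\shl_n(\mathbb{K}) \subset \gl_n(\mathbb{K})$ (the one-dimensional centre acts trivially under the adjoint action). On each non-trivial irreducible representation appearing as an isotypic type, $C$ acts by a non-zero scalar, hence invertibly on the corresponding subcomplex. On the other hand, since $\shl_n(\mathbb{K})$ acts by inner derivations, each basis element $x$ acts on $C_\bullet^\born(\gl_n(A))$ through a Lie derivative satisfying the Cartan identity $L_x = d\iota_x + \iota_x d$, where the contraction $\iota_x$ is a continuous operator of degree $-1$. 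Writing $C = \sum_i x_i y_i$ in a basis dual to the Killing form and composing these Cartan identities, one obtains an explicit continuous chain homotopy witnessing that the action of $C$ on the complex is null-homotopic; hence $C$ acts as zero on homology. These two facts force the homology of each non-trivial isotypic subcomplex to vanish.

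The main obstacle specific to the bornological setting is verifying that the isotypic decomposition is indeed a \emph{topological} direct sum and that the Cartan-type chain homotopies extend continuously to the completed exterior powers $\hat{\Lambda}^\bullet \gl_n(A)$. Both points reduce to the finite-dimensionality of $\gl_n(\mathbb{K})$: the relevant projectors and contractions are continuous linear endomorphisms of the finite-dimensional $\gl_n$ tensor factors tensored with the identity on the $A$-factors, and Proposition~\ref{prop:PropertiesOfTopTensorProds} guarantees that such operators extend to continuous endomorphisms of $\hat{\Lambda}^\bullet \gl_n(A)$.
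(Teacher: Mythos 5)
Your argument is correct and reaches the same conclusion, but the mechanism by which you kill the non-trivial isotypic components is genuinely different from the paper's. The paper's proof, after decomposing $C_\bullet^\born(\gl_n(A))$ into the $\gl_n(\mathbb{K})$-coinvariants plus a complement $L_\bullet$ consisting of non-trivial simple modules, further splits $L_\bullet = Z_\bullet \oplus K_\bullet$ with $Z_\bullet$ the cycles, observes that each non-trivial simple module inside $Z_\bullet$ is spanned by elements of the form $X \cdot c$ with $c$ a cycle, and then applies the single Cartan identity $X \cdot c = d(X \wedge c) + X \wedge dc = d(X \wedge c)$ to conclude that every cycle of $L_\bullet$ is a boundary. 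You instead pass to the Casimir $C$ of $\shl_n(\mathbb{K})$ and play off two facts: on a non-trivial isotypic subcomplex, $C$ acts by a non-zero scalar and hence invertibly on homology, while the identity $L_{x_i}L_{y_i} = [d, \epsilon_{x_i} L_{y_i}]$ (derived from the same Cartan formula and $[d,L_{y_i}]=0$) exhibits $C$ as null-homotopic, so it kills homology. The paper's route is a bit more elementary (no Killing form, no appeal to the Casimir eigenvalue being non-zero exactly on non-trivial types), and it makes a boundary of each cycle explicit; your route is the textbook-conceptual one and cleanly isolates why only the trivial component can survive. Both are valid, and both depend on the same unitality hypothesis to put $\gl_n(\mathbb{K})$ inside $\gl_n(A)$ so that the Cartan homotopy exists as a chain operator on $C_\bullet^\born(\gl_n(A))$.

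One small error to fix: you call the Cartan-homotopy operator $\iota_x$ a ``contraction of degree $-1$.'' In the homological Chevalley--Eilenberg complex the differential $d$ lowers degree, so the operator appearing in $L_x = d h_x + h_x d$ must have degree $+1$; it is the exterior multiplication $\epsilon_x(c) = x \wedge c$, not a contraction. The sign and degree bookkeeping otherwise goes through, and the null-homotopy $\sum_i \epsilon_{x_i} L_{y_i}$ is manifestly continuous, as you correctly argue, because it is built from continuous operators on the finite-dimensional $\gl_n(\mathbb{K})$ tensor factor tensored with the identity on the $A$-factors. Your explicit remarks on why the isotypic decomposition is a topological direct sum and why the homotopy extends to the bornological completion are in fact a useful amplification of a point the paper passes over rather quickly.
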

\begin{proof}
Clearly,~$\gl_n(\mathbb{K})$ acts on~$\gl_n(A) \cong \gl_n(\mathbb{K}) \otimes A$ by a tensor product of the adjoint action and the trivial action. 
Unitality of~$A$ implies that~$\gl_n(\mathbb{K}) \subset \gl_n(A)$ exists as a subalgebra, and hence~$\gl_n(\mathbb{K})$ acts on the~$\gl_n(A)$-cochains.
By Proposition~\ref{prop:PropertiesOfTopTensorProds}, the completed bornological tensor product is associative and commutative, and if one of its factors is finite-dimensional, it agrees with the algebraic tensor product as a vector space. Hence:
\begin{align*}
C_k^\born(\gl_n(A)) \cong
\left(  \gl_n(\mathbb{K})^{\otimes^k}
\otimes 
 A^{\hotimes^k} \right)_{\Sigma_k}.
\end{align*}
We know that since~$\gl_n(\mathbb{K})$ acts trivially on~$A$, and the finite-dimensional tensor module~$\gl_n(\mathbb{K})^{\otimes^k}$ is completely reducible, hence~$C_k^\born(\gl_n(A))$ is completely reducible. 
From here on, the proof is essentially identical to the proof in algebraic setting, see \cite[Prop 10.1.18]{loday1998cyclic}. Complete reducibility gives us
\begin{align*}
C_\bullet^\born(\gl_n(A)) = C_\bullet^\born(\gl_n(A))_{\gl_n(\mathbb{K})} \oplus L_\bullet,
\end{align*}
where~$L_\bullet$ is a direct sum of simple modules, all of which~$\gl_n(\mathbb{K})$ acts nontrivially on. Since the~$\gl_n(\mathbb{K})$-action commutes with the Lie algebra differential, this is even a decomposition into subcomplexes.
We can further decompose~$L_\bullet = Z_\bullet \oplus K_\bullet$, where~$Z_\bullet = L_\bullet \cap \ker d$, and~$K_\bullet$ is a module-theoretic complement of~$Z_\bullet$ in~$L_\bullet$ so that both~$Z_\bullet, K_\bullet$ are direct sums of simple modules. Clearly,~$H_\bullet(L_\bullet) = H_\bullet(Z_\bullet)$.
A simple~$\gl_n(\mathbb{K})$-module~$M \subset Z_\bullet$ is generated by any of its elements that~$\gl_n(\mathbb{K})$ acts nontrivially on. As a consequence, every element of~$L_\bullet$ is of the form~$X \cdot c \in Z_\bullet$ for some cochain~$c \in Z_\bullet$ and~$X \in \gl(\mathbb{K})$.
We further have, for all~$X \in \gl_n(\mathbb{K})$ and~$c \in C_\bullet(\gl_n(A))$ the homotopy equation
\begin{align*}
X \cdot c = d (X \wedge c) + X \wedge dc.
\end{align*}
Hence every element of~$Z_\bullet$ is of the form~$X \cdot c = d (X \wedge c)$, so a boundary. This shows that~$Z_\bullet$ and~$L_\bullet$ are acyclic, and the proof is done.
\end{proof}
\begin{corollary}
If~$A$ is a unital Fréchet algebra, then
\begin{align*}
H_\bullet^\born(\gl(A)) 
\cong 
H_\bullet \left( \bigoplus_k \left( \mathbb{K}[\Sigma_k]  \otimes A^{\hotimes^k} \right)_{\Sigma_k} \right).
\end{align*}
\end{corollary}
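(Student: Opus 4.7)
The plan is to execute the first reduction step of the Loday--Quillen--Tsygan argument in the bornological setting by combining Proposition~\ref{prop:ReduceToCoinvariantsPossible} with a filtered colimit over $n$, and then identifying the $\gl_n(\mathbb{K})$-coinvariants of the Chevalley--Eilenberg complex via classical stable invariant theory for the general linear group.

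First, for fixed $n$, I would apply Proposition~\ref{prop:ReduceToCoinvariantsPossible} (using unitality of $A$) to obtain a quasi-isomorphism $C_\bullet^\born(\gl_n(A)) \to C_\bullet^\born(\gl_n(A))_{\gl_n(\mathbb{K})}$, natural in $n$. Using associativity and commutativity of $\hotimes$ (Proposition~\ref{prop:PropertiesOfTopTensorProds}.iii) together with the finite-dimensionality of $\gl_n(\mathbb{K})$, I would rewrite
\[
\hat{\Lambda}^k \gl_n(A) \;\cong\; \bigl(\gl_n(\mathbb{K})^{\otimes k} \otimes A^{\hotimes k}\bigr)_{\Sigma_k}.
\]
Since $\gl_n(\mathbb{K})$ acts trivially on the $A$-factor and its action commutes with the $\Sigma_k$-action (which just permutes tensor factors, with a sign), the two coinvariant functors can be composed, giving
\[
\bigl(C_k^\born(\gl_n(A))\bigr)_{\gl_n(\mathbb{K})} \;\cong\; \bigl((\gl_n(\mathbb{K})^{\otimes k})_{\gl_n(\mathbb{K})} \otimes A^{\hotimes k}\bigr)_{\Sigma_k}.
\]
Classical stable invariant theory (Schur--Weyl, as recalled in Appendix~\ref{AppendixAlgebraicLodayQuillen}; see also \cite[Chapter~9]{loday1998cyclic}) supplies a $\Sigma_k$-equivariant isomorphism $(\gl_n(\mathbb{K})^{\otimes k})_{\gl_n(\mathbb{K})} \cong \mathbb{K}[\Sigma_k]$ for $n \geq k$, compatible with the structural inclusions $\gl_n \hookrightarrow \gl_{n+1}$.

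Second, I would pass to the colimit $n \to \infty$. Since $A$ is Fréchet each $\gl_n(A)$ is Fréchet, $\gl(A) = \varinjlim_n \gl_n(A)$ is a strict LF-space, and the discussion following Proposition~\ref{prop:PropertiesOfTopTensorProds} guarantees that $\hotimes$ is compatible with this colimit. Consequently $C_\bullet^\born(\gl(A)) = \varinjlim_n C_\bullet^\born(\gl_n(A))$, and filtered colimits of chain complexes commute with homology. Assembling the pieces yields $H_\bullet^\born(\gl(A)) \cong \varinjlim_n H_\bullet(C_\bullet^\born(\gl_n(A))_{\gl_n(\mathbb{K})})$, and the stable invariant theory stabilizes the coinvariants to $(\mathbb{K}[\Sigma_k] \otimes A^{\hotimes k})_{\Sigma_k}$, with the Chevalley--Eilenberg differential transferring faithfully through all these identifications.

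The hard part is ensuring that every algebraic manipulation lifts to the bornological setting: specifically, that $\gl_n(\mathbb{K})$-coinvariants commute with the bornological tensor product (which follows from finite-dimensionality and the existence of a Reynolds projector in characteristic zero) and that the strict-LF structure on $\gl(A)$ is robust enough for $\hotimes$, coinvariants, and homology to all commute with $\varinjlim_n$. Modulo these topological compatibilities, the argument is a direct topological upgrade of the classical first-step reduction in the algebraic LQT theorem.
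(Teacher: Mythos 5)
Your proposal is correct and follows essentially the same route as the paper's proof: reduce to $\gl_n(\mathbb{K})$-coinvariants via Proposition~\ref{prop:ReduceToCoinvariantsPossible}, identify the coinvariant complex with $\left(\mathbb{K}[\Sigma_k]\otimes A^{\hotimes^k}\right)_{\Sigma_k}$ for $n\geq k$ using the invariant theory of Proposition~\ref{PropositionInvariantTheoryOfGLn}, and pass to the limit $n\to\infty$ using compatibility of $\hotimes$ with strict LF-direct limits (Proposition~\ref{prop:PropertiesOfTopTensorProds}), commutation of homology with direct limits, and stabilization of the graded components. The only difference is the order of operations (you reduce to coinvariants before taking the colimit, the paper writes the limit complex first), which is immaterial.
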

\begin{proof}
The tensor product~$\hotimes$ commutes with direct limits on LF-spaces by Proposition~\ref{prop:PropertiesOfTopTensorProds}
\begin{align*}
C_k^\born(\gl(A)) 
&=
\left(  \gl (\mathbb{K})^{\otimes_\beta^k} 
\otimes_\beta 
 A^{\hotimes^k}  \right)_{\Sigma_k}
\\&=
\varinjlim 
\left(  \gl_n (\mathbb{K})^{\otimes^k} 
\otimes
 A^{\hotimes^k}  \right)_{\Sigma_k}
=
\varinjlim
C_k^\born(\gl_n(A)).
\end{align*}
By Proposition~\ref{prop:ReduceToCoinvariantsPossible}, the reduction to coinvariants
\begin{align*}
C_\bullet^\born(\gl_n(A)) \to
C_\bullet^\born(\gl_n(A))_{\gl_n(\mathbb{K})}
\end{align*}
is a quasi-isomorphism for all~$n \in \mathbb{N}$.
From the case~$A = \mathbb{K}$ considered in Propostion~\ref{PropositionInvariantTheoryOfGLn}, we get, for~$n \geq k$, the isomorphism
\begin{align*}
C_\bullet^\born(\gl_n(A))_{\gl_n(\mathbb{K})}
\cong
\left(  \left(\gl_n (\mathbb{K})^{\otimes^k}\right)_{\gl_n(\mathbb{K})}
\otimes 
 A^{\hotimes^k}  \right)_{\Sigma_k}
\cong \left( \mathbb{K}[\Sigma_k]  \otimes A^{\hotimes^k} \right)_{\Sigma_k}.
\end{align*}
Finally, since homology commutes with direct limits, and as every graded component of~$C_\bullet^\born(\gl_n(A))_{\gl_n(\mathbb{K})}$ becomes constant at some point in the direct limit, we get the following chain of isomorphisms:
\begin{align*}
H_\bullet^\born(\gl(A)) 
&\cong 
\varinjlim H_\bullet^\born(\gl_n(A)) 
\\&
\cong
\varinjlim H_\bullet( C_\bullet^\born(\gl_n(A))_{\gl_n(\mathbb{K})} )
\cong
H_\bullet \left( \bigoplus_k \left( \mathbb{K}[\Sigma_k]  \otimes A^{\hotimes^k} \right)_{\Sigma_k} \right).
\end{align*}
\end{proof}
Hence we may work with the latter complex instead, which is in some sense simpler: Since all~$\Sigma_k$ are finite groups, it is a complex of Fréchet spaces. A drawback is that the differential on this subcomplex is more difficult to describe. Regardless, we have the following:
\begin{proposition}
\label{PropositionExtensionOfThetaBijective}
Let~$A$ be a Fréchet algebra. The isomorphism of chain complexes 
\begin{align*}
\theta : 
\Lambda^\bullet C^\lambda_{\bullet -1}(A) 
\to 
\bigoplus_{k \in \mathbb{N}}
\left( \mathbb{K}[\Sigma_{k}] \otimes A^{\otimes^{k}} \right)_{\Sigma_{k}}
\end{align*} 
from Proposition~\ref{PropositionThetaMapLodayQuillen} extends to a continuous isomorphism of chain complexes
\begin{align*}
\hat \theta:
\hat \Lambda^\bullet C_{\bullet-1}^{\lambda,\born}(A) 
\to 
\bigoplus_{k \in \mathbb{N}}
\left( \mathbb{K}[\Sigma_{k}] \otimes A^{\hotimes^{k}} \right)_{\Sigma_{k}}.
\end{align*}
\end{proposition}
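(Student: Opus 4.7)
The plan is to show that both the algebraic map $\theta$ and its inverse, constructed in Proposition~\ref{PropositionThetaMapLodayQuillen}, are continuous with respect to the bornological tensor product topology on each fixed graded component, and then extend them by the universal property of the completion. Because the extensions are built from mutually inverse algebraic maps on a dense subspace, they remain mutually inverse after completion.

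Concretely, I would first unpack the definition of $\theta$ on a homogeneous element $c_1 \wedge \dots \wedge c_p$ with $c_i \in C^\lambda_{k_i - 1}(A)$: it is a finite sum of operations of the form ``shuffle the tensor factors, apply a sign, and insert a fixed cyclic permutation in $\mathbb{K}[\Sigma_{k_1 + \dots + k_p}]$''. Each such constituent is the composition of (a) permutations of tensor factors, which are continuous for $\hotimes$ because they factor through the jointly continuous (indeed, bounded) map $A^{\times k} \to A^{\hotimes k}$; (b) a scalar multiplication; and (c) projection to $\Sigma_k$-coinvariants. For the last, since $\mathbb{Q} \subset \mathbb{K}$ and $\Sigma_k$ is finite, coinvariants are computed by the continuous averaging idempotent $\frac{1}{k!} \sum_{\sigma \in \Sigma_k} \sigma$ (with appropriate signs on the exterior side), and this commutes with the completion functor. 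Hence on each fixed graded component $\theta$ is continuous.

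By the universal property of the completed bornological tensor product, $\theta$ extends to a continuous linear map $\hat\theta$ between the two completions. Running the same argument on the inverse map from Proposition~\ref{PropositionThetaMapLodayQuillen}, which is built from the same type of shuffle/sign/permutation operations, yields a continuous extension $\widehat{\theta^{-1}}$. Since both $\hat\theta \circ \widehat{\theta^{-1}}$ and $\widehat{\theta^{-1}} \circ \hat\theta$ agree with the identity on the dense algebraic tensor product subspace and are continuous, they agree with the identity everywhere, so $\hat\theta$ is a topological isomorphism. The chain map property then transfers from $\theta$ to $\hat\theta$ because the Chevalley-Eilenberg and cyclic differentials are continuous on the completed spaces and thus their equality on a dense subspace implies equality throughout.

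The main obstacle is purely bookkeeping: one must keep careful track of the shuffle indexing and sign conventions when verifying that the individual constituents of $\theta$ and $\theta^{-1}$ really factor as finite compositions of bounded tensor-product operations. A secondary subtlety is the commutation of coinvariants under $\Sigma_k$ with completion in $\hotimes$, but this is formal once one observes that the averaging idempotent is a continuous projection. No new analytic input beyond Proposition~\ref{prop:PropertiesOfTopTensorProds} (associativity, commutativity, and compatibility with finite-dimensional factors) is required.
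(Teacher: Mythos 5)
Your proposal is correct and follows essentially the same route as the paper: establish continuity of $\theta$ and of its inverse on each piece (the paper organizes this by decomposing into conjugacy-class summands $Z_{[\sigma]}$ and writing the inverse explicitly as $(\tau^{-1}\sigma\tau)\otimes a_1\otimes\dots\otimes a_k \mapsto a_{\tau(1)}\otimes\dots\otimes a_{\tau(k)}$, continuous by finiteness of $[\sigma]$), then extend both to the completions and use density to see the extensions are mutually inverse chain maps. Your averaging-idempotent handling of the $\Sigma_k$-coinvariants is just a mild repackaging of the paper's use of quotient maps and the finite conjugacy class, so no substantive difference remains.
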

\begin{proof}
By definition of~$\theta$, we can decompose 
\begin{align*}
\Lambda^\bullet C_{\bullet -1}^\lambda(A) = \bigoplus_{k \geq 1} \bigoplus_{[\sigma] \subset \Sigma_k} Z_{[\sigma]},
\end{align*} 
where the direct sum over~$[\sigma]$ is carried out over all conjugacy classes~$ [\sigma] \in \Sigma_k$, and~$Z_{[\sigma]}$ the span of all elements~$[u_1] \wedge \dots \wedge [u_r]  \in \Lambda^\bullet C_{\bullet -1}^\lambda(A)$ with
\begin{align*}
\exists \tau \in [\sigma]: \theta ([u_1] \wedge \dots \wedge [u_r]) = \left[ \tau \otimes \left(u_1 \otimes \dots \otimes u_r \right) \right].
\end{align*}
For each~$k \geq 1$ and conjugacy class~$[\sigma] \subset \Sigma_k$, choose a representative~$\sigma$ in cycle decomposition
\begin{align*}
\sigma = (1 \; \dotsb \; k_1) \circ (k_1 + 1 \; \dotsb \; k_2) \circ \dots \circ  (k_{r-1} + 1 \; \dotsb \; k_r).
\end{align*}
Then, on~$Z_{[\sigma]}$, the map~$\theta$ arises from the continuous map
\begin{align*}
 A^{\otimes^k} \to \mathbb{K}[\Sigma_k] \otimes A^{\otimes^k}, \quad
a_1 \otimes \dots \otimes a_k \mapsto \sigma \otimes a_{1} \otimes \dots \otimes a_{k},
\end{align*} 
by composition with the quotient map
\begin{align*}
\mathbb{K}[\Sigma_k] \otimes A^{\otimes^k} \to \left( \mathbb{K}[\Sigma_k] \otimes A^{\otimes^k} \right)_{\Sigma_k},
\end{align*}
factoring through the kernel, and then restricting to the direct summand~$Z_{[\sigma]}$ in the kernel. These actions leave continuity invariant, so~$\theta$, when restricted to the direct summand~$Z_{[\sigma]}$, inherits continuity. But then $\theta$ itself is continuous.
Also, for every conjugacy class~$[\sigma] \subset \Sigma_k$, the restriction~$\theta \att_{Z_{[\sigma]}} : Z_{[\sigma]} \to \theta(Z_{[\sigma]})$ admits a continuous inverse, induced in the same way by a map
\begin{align*}
\mathbb{K}[[\sigma]] \otimes A^{\otimes^k} \to  A^{\otimes^k} , \quad
(\tau^{-1} \sigma \tau) \otimes a_1 \otimes \dots \otimes a_k \mapsto  a_{\tau(1)} \otimes \dots \otimes a_{\tau(k)}.
\end{align*}
The above assignment defines a continuous map due to the finiteness of the conjugacy class~$[\sigma]$. Since both~$\theta$ and its inverse are continuous, they extend to continuous maps between the completions of their respective domains and codomains, and these extensions still compose to the identity. 
Thus, these extensions are isomorphisms of chain complexes, proving the statement.
\end{proof}
\begin{corollary}
Let~$A$ be a unital Fréchet algebra. There is a continuous quasi-isomorphism
\begin{align*}
C_\bullet^\born(\gl(A)) \to \hat \Lambda^\bullet C_{\bullet -1}^{\lambda,\born}(A).
\end{align*}
\end{corollary}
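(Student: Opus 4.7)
The plan is to construct the map as a composition of three continuous chain maps that are essentially already prepared by the preceding material. Specifically, I will define
\[
C_\bullet^\born(\gl(A)) \xrightarrow{\;q\;} C_\bullet^\born(\gl(A))_{\gl(\mathbb{K})} \xrightarrow{\;\cong\;} \bigoplus_{k \geq 0} \bigl( \mathbb{K}[\Sigma_k] \otimes A^{\hotimes^k} \bigr)_{\Sigma_k} \xrightarrow{\;\hat\theta^{-1}\;} \hat\Lambda^\bullet C_{\bullet-1}^{\lambda,\born}(A),
\]
where $q$ is the continuous quotient onto $\gl(\mathbb{K})$-coinvariants, the middle identification comes from invariant theory for the stable general linear algebra, and the last map is the inverse of the topological chain isomorphism supplied by Proposition~\ref{PropositionExtensionOfThetaBijective}.

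First I would verify that $q$ is a continuous quasi-isomorphism. Continuity follows because the $\gl(\mathbb{K})$-action is the inductive limit of the continuous adjoint actions on each $\gl_n(A)$, so the quotient to coinvariants is induced from continuous maps at finite rank. The quasi-isomorphism property is obtained by taking the direct limit of the finite-rank reductions in Proposition~\ref{prop:ReduceToCoinvariantsPossible}, using that homology commutes with direct limits and that $\hotimes$ is compatible with strict inductive limits (Proposition~\ref{prop:PropertiesOfTopTensorProds}). Next, invariant theory for $\gl_n(\mathbb{K})$ in the stable range $n \geq k$ (Proposition~\ref{PropositionInvariantTheoryOfGLn}) identifies the coinvariants with $\bigoplus_k (\mathbb{K}[\Sigma_k] \otimes A^{\hotimes^k})_{\Sigma_k}$; since each $\mathbb{K}[\Sigma_k]$ is finite-dimensional, the bornological tensor product collapses to the algebraic one on that factor, so the underlying algebraic identification is automatically a topological isomorphism. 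Finally, applying $\hat\theta^{-1}$ from Proposition~\ref{PropositionExtensionOfThetaBijective} yields the required continuous chain map to $\hat\Lambda^\bullet C_{\bullet-1}^{\lambda,\born}(A)$.

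The step I expect to require the most care is the compatibility of differentials: the Chevalley--Eilenberg differential on the coinvariant side must be intertwined, via $\hat\theta$, with the induced Hochschild-style differential on $\hat\Lambda^\bullet C_{\bullet-1}^{\lambda,\born}(A)$. On the algebraic side this is precisely the content of the classical chain-level LQT result recorded in Proposition~\ref{PropositionThetaMapLodayQuillen}, but I need to make sure that this identity extends from the algebraic subspace to the completed tensor products. Since $\hat\theta$ was explicitly constructed in Proposition~\ref{PropositionExtensionOfThetaBijective} as the continuous extension of the algebraic $\theta$, and both differentials are continuous extensions of algebraic differentials that agree on the dense algebraic subspace, the extension of this intertwining to the completions is forced by continuity and density. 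The quasi-isomorphism property of the full composition then follows from that of $q$ together with the fact that the remaining two maps are (topological) chain isomorphisms, concluding the argument.
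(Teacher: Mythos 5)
Your proposal is correct and follows essentially the same route the paper takes (the corollary is stated there as an immediate consequence of the preceding results): reduce to $\gl(\mathbb{K})$-coinvariants via the direct limit of Proposition~\ref{prop:ReduceToCoinvariantsPossible}, identify the stabilized coinvariants with $\bigoplus_k(\mathbb{K}[\Sigma_k]\otimes A^{\hotimes^k})_{\Sigma_k}$ by invariant theory, and compose with the inverse of $\hat\theta$ from Proposition~\ref{PropositionExtensionOfThetaBijective}. Your worry about intertwining the differentials is already settled by that proposition, which asserts $\hat\theta$ is a continuous isomorphism of \emph{chain complexes}, so no extra density argument is needed.
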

Corollary~\ref{CorollarySchwartzKuenneth} then finally implies:
\begin{theorem}
\label{thm:NuclearFrechetAlgebrasFulfilLQT}
Let~$A$ be a nuclear unital Fréchet algebra, and assume that the differential of the bornological cyclic complex~$C^{\lambda,\born}_\bullet(A)$ has closed range. 
Then we have, for all~$r,n \in \mathbb{N}$ with~$r + 1 \leq n$
\begin{align*}
H_r^\born(\gl_n(A)) \cong  \left(\hat\Lambda^\bullet H_{\bullet -1}^{\lambda,\born}(A)\right)_r,
\end{align*}
and
\begin{align*}
H_\bullet^\born(\gl(A)) \cong \hat \Lambda^\bullet H_{\bullet -1}^{\lambda,\born}(A).
\end{align*}
\end{theorem}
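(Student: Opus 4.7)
The plan is to start from the continuous quasi-isomorphism $C_\bullet^\born(\gl(A)) \to \hat\Lambda^\bullet C_{\bullet-1}^{\lambda,\born}(A)$ established in the preceding corollary, and then compute the homology of the target using the Künneth theorem for nuclear Fréchet complexes with closed-range differential (Corollary~\ref{CorollarySchwartzKuenneth}). Since $\hat\Lambda^\bullet C_{\bullet-1}^{\lambda,\born}(A)$ decomposes as a direct sum over $k$ of $\Sigma_k$-coinvariants of the completed tensor power $(C_{\bullet-1}^{\lambda,\born}(A))^{\hotimes k}$, the idea is first to apply the iterated Künneth formula to the uncompleted-exterior tensor complex, yielding
\begin{align*}
H_\bullet\bigl((C_{\bullet-1}^{\lambda,\born}(A))^{\hotimes k}\bigr) \cong (H_{\bullet-1}^{\lambda,\born}(A))^{\hotimes k},
\end{align*}
which is legitimate because $A$ is nuclear Fréchet (hence so are the $C_n^{\lambda,\born}(A)$, as quotients of nuclear Fréchet spaces by closed subspaces) and the cyclic differential has closed range by assumption.

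Next I would check that taking $\Sigma_k$-coinvariants commutes with homology in this topological setting. Because $\Sigma_k$ is finite and $\mathbb{Q} \subset \mathbb{K}$, the antisymmetrization idempotent $\frac{1}{k!}\sum_{\sigma\in\Sigma_k}\sign(\sigma)\,\sigma$ is a continuous projector on $(C_{\bullet-1}^{\lambda,\born}(A))^{\hotimes k}$ that commutes with the differential, splitting the complex equivariantly into its antisymmetric component and a complement. Homology respects this splitting, so the homology of the coinvariants is the coinvariants of the homology, giving exactly $\hat\Lambda^k H_{\bullet-1}^{\lambda,\born}(A)$ in degree $k$. Summing over $k$ yields the second, stable statement $H_\bullet^\born(\gl(A)) \cong \hat\Lambda^\bullet H_{\bullet-1}^{\lambda,\born}(A)$.

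For the finite-$n$ statement with $r+1\leq n$, the plan is to run the argument of Proposition~\ref{prop:ReduceToCoinvariantsPossible} and the subsequent corollary at each finite $n$ rather than in the limit: reduction to $\gl_n(\mathbb{K})$-coinvariants is valid for any unital $A$, and Proposition~\ref{PropositionInvariantTheoryOfGLn} identifies $\bigl(\gl_n(\mathbb{K})^{\otimes k}\bigr)_{\gl_n(\mathbb{K})}$ with $\mathbb{K}[\Sigma_k]$ precisely in the stable range $k \leq n$. This range is inherited by Proposition~\ref{PropositionExtensionOfThetaBijective} and the Künneth argument above, since the $r$-th homology of $\hat\Lambda^\bullet C_{\bullet-1}^{\lambda,\born}(A)$ involves only wedge factors of total degree at most $r$, each using at most $r+1$ tensor slots; requiring $r+1 \leq n$ guarantees that the inclusion $\gl_n(A) \hookrightarrow \gl(A)$ has already stabilized on $H_r^\born$.

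The main obstacle I expect is verifying that the three operations — the Künneth isomorphism, passage to $\Sigma_k$-coinvariants, and completion in $\hotimes$ — interact cleanly at the topological level. Each ingredient has been prepared in earlier sections (nuclearity is preserved under completed tensor products and quotients by closed subspaces, closed range is preserved under iterated Künneth, $\hotimes$ is associative by Proposition~\ref{prop:PropertiesOfTopTensorProds}, and finite-group coinvariants are given by a continuous idempotent), but one must confirm that the antisymmetrization idempotent extends continuously to the completed tensor power and that its image agrees with the target $\hat\Lambda^\bullet$ appearing in the statement of Proposition~\ref{PropositionExtensionOfThetaBijective}; once this bookkeeping is done, the theorem follows by composing the quasi-isomorphism of the preceding corollary with the Künneth-plus-coinvariants computation.
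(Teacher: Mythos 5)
Your proposal follows the same route as the paper: compose the continuous quasi-isomorphism $C_\bullet^\born(\gl(A)) \to \hat\Lambda^\bullet C_{\bullet-1}^{\lambda,\born}(A)$ from the preceding corollary with the iterated Künneth formula of Corollary~\ref{CorollarySchwartzKuenneth}, using nuclearity and the closed-range hypothesis, and handle the finite-$n$ case via the stable range $k \leq n$ in Proposition~\ref{PropositionInvariantTheoryOfGLn}. The paper leaves the Künneth-plus-coinvariants bookkeeping and the finite-$n$ stabilization implicit, and your filled-in details (the continuous antisymmetrization idempotent, nuclearity of the quotient spaces $C_n^{\lambda,\born}(A)$) are correct and consistent with its argument.
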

\begin{remark}
Note that the only place where it mattered that we used~$\gl(A)$ rather than any of the other limits of classical simple Lie algebras~$\shl(A), \, \shp(A)$ or~$\so(A)$ was in the coinvariants for the tensor modules~$\gl(\mathbb{K})^{\otimes^k}$. 
We will not explicitly present this, but we do want to remark that one can gain statements analogous to Theorem~\ref{thm:NuclearFrechetAlgebrasFulfilLQT} for these other Lie algebras with very little modification, save that one occasionally may need to replace cyclic homology with the closely related \emph{dihedral homology}, which we have not defined here. We direct the reader to \cite[Chapter 9 \& 10]{loday1998cyclic} for a detailled discussion of the algebraic setting.
\end{remark}
Lastly, these results extend to certain non-unital algebras as well. Specifically, in the algebraic setting one can weaken the assumption of unitality to \emph{H-unitality}, a property which is defined as the acyclicity of the algebraic bar complex of~$A$, see \cite{hanlon1988complete} for a proof in the finite-dimensional setting and the preprint \cite{cortinas2005cyclic} (supplementing the publication \cite{cortinas2006obstruction}) for an explicit generalization to infinite-dimensional algebras. 
We delay the lengthy proof to Section~\ref{sec:ProofOfNonUnitalLQT}:
\begin{theorem}
\label{TheoremNonUnitalLQT}
Let~$A$ be a nuclear Fréchet algebra, and assume that the differential of the bornological cyclic complex~$C^{\lambda,\born}_\bullet(A)$ has closed range. 
Additionally, assume that~$A$ is \emph{bornologically H-unital}, i.e. the bornological bar complex~$C^{\barr,\born}(A)$ is acyclic.
Then we have, for all~$r, n \in \mathbb{N}$ with~$2r + 1 \leq n$:
\begin{align*}
H_r^\born(\gl_n(A)) \cong \left(\hat \Lambda^\bullet H_{\bullet -1}^{\lambda,\born}(A)\right)_r,
\end{align*}
and
\begin{align*}
H_\bullet^\born(\gl(A)) \cong \hat \Lambda^\bullet H_{\bullet -1}^{\lambda,\born}(A)
\end{align*}
\end{theorem}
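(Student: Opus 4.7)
The plan is to bootstrap the non-unital statement from the unital case (Theorem~\ref{thm:NuclearFrechetAlgebrasFulfilLQT}) by passing to the nuclear Fréchet unitization $A^+ := A \oplus \mathbb{K}$, in which $A$ sits as a closed ideal. This detour is forced on us because the key reduction to coinvariants in Proposition~\ref{prop:ReduceToCoinvariantsPossible} requires $\gl_n(\mathbb{K}) \hookrightarrow \gl_n(A)$ as a Lie subalgebra, which is unavailable when $A$ is non-unital; bornological H-unitality is precisely what allows the eventual descent from $\gl_n(A^+)$ back to $\gl_n(A)$.

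First, I would transfer the hypotheses to $A^+$. The algebra $A^+$ is nuclear Fréchet as a topological direct sum. Using the augmentation $\epsilon \colon A^+ \to \mathbb{K}$ and bornological H-unitality of $A$, a bornological Wodzicki-type excision argument (which must itself be established) yields a splitting
\begin{align*}
    C^{\lambda,\born}_\bullet(A^+) \cong C^{\lambda,\born}_\bullet(A) \oplus C^{\lambda,\born}_\bullet(\mathbb{K})
\end{align*}
of complexes of nuclear Fréchet spaces. Combining the closed-range hypothesis on $A$ with the trivial case of $\mathbb{K}$, Proposition~\ref{PropQuasiIsoClosedRange} then gives closed range for $A^+$, so that Theorem~\ref{thm:NuclearFrechetAlgebrasFulfilLQT} is applicable to $A^+$.

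Second, I would use the Lie algebra splitting $\gl_n(A^+) \cong \gl_n(\mathbb{K}) \ltimes \gl_n(A)$ to set up a bornological Hochschild--Serre spectral sequence
\begin{align*}
    E^2_{p,q} = H^\born_p\bigl(\gl_n(\mathbb{K});\, H^\born_q(\gl_n(A))\bigr) \Longrightarrow H^\born_{p+q}(\gl_n(A^+)).
\end{align*}
Passing to $\gl_n(\mathbb{K})$-coinvariants, using complete reducibility on the finite-dimensional factors of $C^\born_\bullet(\gl_n(A))$ exactly as in Proposition~\ref{prop:ReduceToCoinvariantsPossible}, and invoking the stability range of $\gl_n$-invariant theory on tensor modules (which only kicks in at $2r + 1 \leq n$ once cyclic permutations must be absorbed, rather than at $r + 1 \leq n$ as in the unital case) isolates $H^\born_r(\gl_n(A))$. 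The split decomposition from Step~1, together with the topological Künneth isomorphism $\hat\Lambda^\bullet(V \oplus W) \cong \hat\Lambda^\bullet V \hotimes \hat\Lambda^\bullet W$ for nuclear Fréchet spaces (valid via Corollary~\ref{CorollarySchwartzKuenneth}), then extracts $\hat\Lambda^\bullet H^{\lambda,\born}_{\bullet-1}(A)$ as the claimed answer; the statement for $\gl(A)$ follows by a direct-limit argument identical to the one in the unital case.

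The hard part will be establishing bornological Wodzicki excision. The algebraic proof uses acyclicity of the bar complex to construct explicit contracting homotopies on the relative Hochschild/cyclic complex; lifting these to the $\hotimes$-completion demands boundedness of the homotopies at the level of uncompleted bar complexes so that they extend continuously, which is where nuclearity and the exactness of $\hotimes$ from Proposition~\ref{prop:ProjTensorProductExact} do the real work. A secondary technical point is checking that the Hochschild--Serre spectral sequence is well-defined bornologically in each page, which again reduces to closed-range assertions for the associated filtration quotients and is handled by the machinery of Section~\ref{SectionContinuousHochschildAndCyclicHomology}. Once excision is in hand, the remainder of the argument essentially tracks through the algebraic blueprint unchanged.
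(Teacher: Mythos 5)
Your route (unitize, prove bornological Wodzicki excision, then run a Hochschild--Serre spectral sequence for $\gl_n(A^+)\cong\gl_n(\mathbb{K})\ltimes\gl_n(A)$) is genuinely different from the paper's, but it has a gap at exactly the point where the real difficulty of the non-unital case sits. You cannot pass to $\gl_n(\mathbb{K})$-coinvariants ``exactly as in Proposition~\ref{prop:ReduceToCoinvariantsPossible}'': that proof rests on the Cartan-type homotopy $X\cdot c=d(X\wedge c)+X\wedge dc$, which requires $X\in\gl_n(\mathbb{K})$ to be an element of $\gl_n(A)$, i.e.\ unitality of $A$. Without it, the $\gl_n(\mathbb{K})$-action on $C_\bullet^\born(\gl_n(A))$ is by outer derivations, it need not act trivially on homology, and the non-invariant isotypic components need not be acyclic. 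Consequently the $E^2$-page of your spectral sequence is not $H_p(\gl_n(\mathbb{K});\mathbb{K})\otimes H_q(\gl_n(A))$ in any obvious sense, and even granting degeneration and no extension problems, knowledge of the abutment $H_\bullet^\born(\gl_n(A^+))$ (from the unital theorem applied to $A^+$) cannot be ``inverted'' to recover $E^2$, let alone $H_q^\born(\gl_n(A))$ itself: at best one would see coinvariant information, and the claim that the coinvariants exhaust the homology is precisely the content one is trying to prove. Note also that your H-unitality input appears only on the cyclic side (excision for $A^+$), whereas the place it is indispensable is on the Lie-homology side, to kill the nontrivial isotypic components; a minor additional inaccuracy is that the asserted chain-level splitting $C^{\lambda,\born}_\bullet(A^+)\cong C^{\lambda,\born}_\bullet(A)\oplus C^{\lambda,\born}_\bullet(\mathbb{K})$ is false (the differential mixes the summands, since $1\cdot a=a$); excision gives it only up to quasi-isomorphism.

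For comparison, the paper avoids unitization altogether: it decomposes $C_\bullet^\born(\gl_n(A))$ into $\gl_n(\mathbb{K})$-highest-weight components indexed by pairs of partitions (Lemmas~\ref{LemmaDecompositionNonUnitalChains} and~\ref{LemmaCalculationOfHighestWeightModules}), and then, following Hanlon and Cortiñas, constructs in Proposition~\ref{PropositionChainIsomorphismForNonunitalLQT} a continuous chain map identifying each component in degrees $\leq n/2$ with a complex built from $\hat\Lambda^\bullet C^{\lambda,\born}_{\bullet-1}(A)$ and completed tensor powers of the bar complex. Bornological H-unitality (acyclicity of the bar complex), combined with the closed-range hypotheses and the Künneth formula of Corollary~\ref{CorollarySchwartzKuenneth}, forces every non-trivial component to be acyclic in that range, leaving only the invariant component, which is handled as in the unital case; this is also the true source of the range $2r+1\leq n$ (the Hanlon isomorphism holds in degree $\leq n/2$), not an invariant-theoretic absorption of cyclic permutations as you suggest. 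If you want to salvage your approach, you would have to prove independently that the $\gl_n(\mathbb{K})$-action on $H_\bullet^\born(\gl_n(A))$ is trivial and that the non-invariant components do not contribute stably, and at that point you are reconstructing the Hanlon--Cortiñas argument anyway.
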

We give a rough sketch of the proof here: The complex~$C_\bullet^\born(\gl_n(A))$ can be decomposed into isotypic components with respect to the action of~$\gl_n(\mathbb{K})$. In the unital case, we have seen that only the invariant component contributes to homology.
In a similar vein to the proof of Proposition~\ref{PropositionExtensionOfThetaBijective}, one constructs a morphism of every component to a certain complex involving combinations of the bar complex and the Connes complex of~$A$, see the construction of~$\phi$ in \cite[Theorem 3.1]{cortinas2005cyclic}. As one takes the direct limit~$n \to \infty$, these morphisms become stable isomorphisms. In every isotypic component in which the bornological bar complex appears nontrivially, the acyclicity of the bornological bar complex forces the whole component to become acyclic; this leaves only the invariant component to contribute to homology, and this component is related to the cyclic complex exactly as in the unital setting. 
%
\subsection{Application to~$A = C^\infty(M)$ and~$C^\infty_c(\mathbb{R}^n)$}
We now apply the results of the previous section to the case when~$A$ equals some spaces of smooth functions.
\begin{corollary}
\label{cor:LQTForSmoothFunctions}
Let~$M$ be a smooth manifold, we have
\begin{align*}
H_\bullet^\born(\gl(C^\infty(M))) 
&\cong 
\hat \Lambda^\bullet H_\bullet^{\lambda,\born}(C^\infty(M)), \\
H^\born_\bullet(\gl(C^\infty_c(\mathbb{R}^n)))
&\cong 
\hat \Lambda^\bullet H_\bullet^{\lambda,\born}(C^\infty_c(\mathbb{R}^n)).
\end{align*}
\end{corollary}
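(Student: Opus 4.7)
The plan is to verify that both algebras satisfy the hypotheses of the topological Loday--Quillen--Tsygan theorems already proved (Theorem \ref{thm:NuclearFrechetAlgebrasFulfilLQT} for the unital Fréchet case and Theorem \ref{TheoremNonUnitalLQT} for the H-unital nuclear Fréchet case), and then, in the compactly supported case, pass through a strict direct limit of Fréchet algebras.

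For $A = C^\infty(M)$ the work is essentially bookkeeping. The algebra is a unital nuclear Fréchet algebra with its standard topology, so the hypotheses on $A$ itself are immediate. The remaining hypothesis on the closed range of the Connes differential is exactly the content of Theorem \ref{thm:CyclicHomologyClosedRangeForSmoothFunctions}. Thus Theorem \ref{thm:NuclearFrechetAlgebrasFulfilLQT} applies verbatim and gives
\begin{align*}
H_\bullet^\born(\gl(C^\infty(M))) \cong \hat\Lambda^\bullet H^{\lambda,\born}_{\bullet-1}(C^\infty(M)).
\end{align*}

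For $A = C^\infty_c(\mathbb{R}^n)$, the algebra is an LF-algebra rather than Fréchet, so the Fréchet LQT theorems do not apply directly. First I would reduce to the Fréchet pieces $A_r := C^\infty_{\bar D_r(0)}(\mathbb{R}^n)$. Each $A_r$ is a nuclear Fréchet algebra which is generally non-unital but, by Corollary \ref{CorollaryBarComplexCompactAcyclic} (more precisely, its evident generalization to arbitrary closed disks, which follows identically from Voigt's factorization theorem after rescaling), bornologically H-unital. Theorem \ref{TheoremContinuousCyclicHomologyOfCinftyD} (again with the obvious rescaling) also gives closed range for the Connes differential of $A_r$. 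Theorem \ref{TheoremNonUnitalLQT} therefore applies to each $A_r$, yielding
\begin{align*}
H_\bullet^\born(\gl(A_r)) \cong \hat\Lambda^\bullet H^{\lambda,\born}_{\bullet-1}(A_r).
\end{align*}

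Second, I would take the direct limit $r \to \infty$. The compatibility of $\gl(-)$ with the direct limit of algebras is built into its definition; since the inclusions $A_r \hookrightarrow A_{r'}$ are topological embeddings and the system is strict, Proposition \ref{prop:PropertiesOfTopTensorProds} guarantees that $\gl(C^\infty_c(\mathbb{R}^n)) = \varinjlim \gl(A_r)$ as strict LF-spaces, and the same proposition shows that each exterior power $\hat\Lambda^k$ commutes with such direct limits (since $\hotimes$ does and since passing to coinvariants under the finite group $\Sigma_k$ is exact). Homology also commutes with strict direct limits, so applying $\varinjlim$ to both sides of the LQT isomorphism for $A_r$ and using the compatibility with the cyclic homology limit from the proof of Corollary \ref{cor:CyclicHomologyOfCompactlySupported} gives the desired isomorphism.

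The only nontrivial point in the above is checking that the isomorphisms produced by Theorem \ref{TheoremNonUnitalLQT} are natural in $A_r$, i.e.\ that they commute with the maps induced by the inclusions $A_r \hookrightarrow A_{r'}$. This naturality is clear from the construction (everything is built out of the canonical maps $\theta$, $\phi$ and reductions to coinvariants, all of which are functorial in continuous algebra homomorphisms); I expect this to be the only step requiring explicit verification, and it is routine. No other obstacle is anticipated.
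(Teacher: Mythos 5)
Your proposal is correct and follows essentially the same route as the paper: the first isomorphism is Theorem~\ref{thm:NuclearFrechetAlgebrasFulfilLQT} combined with the closed-range statement of Theorem~\ref{thm:CyclicHomologyClosedRangeForSmoothFunctions}, and the second is Theorem~\ref{TheoremNonUnitalLQT} applied to each $C^\infty_{\bar D_r(0)}(\mathbb{R}^n)$ (H-unitality from Corollary~\ref{CorollaryBarComplexCompactAcyclic}, closed range from Theorem~\ref{TheoremContinuousCyclicHomologyOfCinftyD}) followed by the strict direct limit, exactly as in the paper. Your added remarks about rescaling the disk and naturality of the stable isomorphisms in the limit are fine but are points the paper passes over silently.
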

\begin{proof}
We have established in Theorem~\ref{thm:CyclicHomologyClosedRangeForSmoothFunctions} that the closed-range assumption of Theorem~\ref{thm:NuclearFrechetAlgebrasFulfilLQT} holds for the nuclear Fréchet algebra~$C^\infty(M)$, so the first isomorphism is shown.
Further, we have shown in Corollary~\ref{CorollaryBarComplexCompactAcyclic} and Theorem~\ref{TheoremContinuousCyclicHomologyOfCinftyD} that the~$C^\infty_{\bar{D}_r(0)}(\mathbb{R}^n)$ are bornologically H-unital and fulfil the closed-range assumption of Theorem~\ref{TheoremNonUnitalLQT}. This shows
\begin{align*}
H^\born_\bullet(\gl(C^\infty_{\bar{D}_r(0)}(\mathbb{R}^n))) 
= 
\hat{\Lambda}^\bullet H_{\bullet -1}^{\lambda,\born}(C^\infty_{\bar{D}_r(0)}(\mathbb{R}^n)).
\end{align*}
Now, homology and~$\hotimes$ commute with direct limits, and so we have
\begin{align*}
H^\born_\bullet(\gl(C^\infty_c(\mathbb{R}^n)) )
&=
\varinjlim  H^\born_\bullet(\gl(C^\infty_{\bar{D}_r(0)}(\mathbb{R}^n)))\\
&=
\varinjlim 
\hat{\Lambda}^\bullet H_{\bullet -1}^{\lambda,\born}(C^\infty_{\bar{D}_r(0)}(\mathbb{R}^n))
=
\hat \Lambda^\bullet H_\bullet^{\lambda,\born}(C^\infty_c(\mathbb{R}^n)).
\end{align*}
\end{proof}
%
%
%
%
%
\section{Bornological homology of nontrivial gauge algebras}
\label{SectionNontrivialGaugeAlgs}
Now, let~$M$ be a finite-dimensional, smooth manifold,~$H$ a finite-dimensional Lie group with associated Lie algebra~$\mathfrak{h}$, and~$P \to M$ a principal~$H$-bundle. Consider the \emph{adjoint bundle}~$\Ad P := P \times_{\Ad} \mathfrak{h}$.
We would like to understand the bornological Lie algebra homology of the compactly supported gauge algebra~$\Gamma_c(\Ad P )$. For small open sets~$U \subset M$, sections of~$\Ad P  \att_U \to U$ can be identified with~$\mathfrak{h} \otimes C^\infty(U)$. As a consequence, for simple classical Lie algebras~$\mathfrak{h}$, the methods of the previous section allow a calculation of the stable part of bornological homology~$H_\bullet^\born(\mathfrak{h} \otimes C^\infty_c(U))$.
By employing local-to-global methods as in the spectral sequence calculus developed in \cite{bott1977cohomology}, we will be able to deduce information about~$H^\born_\bullet(\Gamma_c(\Ad P ))$ from this local data. As a model of the more general case, we will only consider the case~$\mathfrak{h} = \mathfrak{gl}_n(\mathbb{K})$ for some~$1 \leq n < \infty$, as this connects directly to the methods from Section~\ref{SectionLodayQuillenFrechet}.
The methods of the following section are not exclusive to gauge algebras: In more generality, the following methods can be applied to calculate bornological Lie algebra homology of~$\Gamma_c(A)$ for any Lie algebroid~$A \to M$, assuming that the bornological Lie algebra homology of~$\Gamma_c(A \att_U)$ can be calculated whenever~$U$ is a small disk over which~$A$ trivializes.
\subsection{Cosheaves of Lie algebra chains and diagonal homology}
Fix in this subsection a smooth, locally trivial Lie algebra bundle~$\mathcal{K} \to M$ with fi\-nite\--di\-men\-sion\-al fibre~$\gl_n(\mathbb{K})$ for some~$1 \leq n < \infty$. Whenever we speak of (pre-)cosheaves in the following, we think of them as valued in the category of abelian groups as in Appendix~\ref{AppendixCosheavesOnABase}, but this is largely unimportant.
We first introduce some notation: Define for a compact~$K \subset M$ the closed Fréchet subspace
\begin{align*}
\Gamma_K(\mathcal{K}) := \{s \in \Gamma(\mathcal{K}) : \supp s \subset K \} \subset \Gamma(\mathcal{K}).
\end{align*}
Then, given any compact exhaustion~$\{K_n\}$ of~$M$, the topology on the compactly supported section space arises from the direct limit topology~$ \Gamma_c(\mathcal{K}) = \varinjlim \Gamma_{K_n}(\mathcal{K})$.
It is well-known that if we have two vector bundles~$A,B \to M$, then 
\begin{align}
\label{eq:ExteriorTensorProductOfSections}
\Gamma(A) \hotimes \Gamma(B) \cong \Gamma(A \boxtimes B),
\end{align}
where~$A \boxtimes B := \pr_1^* A \otimes \pr_2^* B \to M \times M$ denotes the exterior tensor product of vector bundles, see for example \cite[Thm 51.6]{treves1967topological} for the statement in trivial fibres.
Then, using Proposition~\ref{prop:PropertiesOfTopTensorProds}:
\begin{align*}
\Gamma_c(\mathcal{K})^{\hotimes^k}
\cong 
\varinjlim \Gamma_{K_n}(\mathcal{K})^{\hotimes^k}
\cong 
\varinjlim
\Gamma_{K_n \times \dots \times K_n}(\mathcal{K}^{\boxtimes^k})
\cong 
\Gamma_c(\mathcal{K}^{\boxtimes^k}).
\end{align*}
This justifies the following definition:
\begin{definition}
We define, for every~$k \geq 1$ the precosheaf~$B_k(\mathcal{K}, \cdot)$ over~$M^k$, assigning to an open set~$U \subset M^k$ the set
\begin{align*}
B_k(\mathcal{K}, U) := \Gamma_c(\mathcal{K}^{\boxtimes^k} \att_U).
\end{align*}
The precosheaf map~$\Gamma_c(\mathcal{K}^{\boxtimes^k} \att_U) \to \Gamma_c(\mathcal{K}^{\boxtimes^k} \att_V)$ associated to the inclusion~$U \subset V$ is defined via extension by zero.
\end{definition}
\begin{remark}
From this definition and the previous isomorphism, we get the bor\-no\-lo\-gi\-cal Lie algebra complex via restricting to global sections and~$\Sigma_k$-coinvariants, i.e. for all~$k \geq 1$ we have
\begin{align*}
C_k^\born(\Gamma_c(\mathcal{K})) 
\cong 
(B_k(\mathcal{K}, M^k))_{\Sigma_k}.
\end{align*}
Note in particular that we set the zero degree part to zero, so we think of~$B_k$ as a reduced complex.
\end{remark}
Since the precosheaf~$B_k(\mathcal{K},\cdot)$ arises from the compactly supported sections of a soft sheaf (given by the sections of a smooth vector bundle), Proposition~\ref{PropositionSoftSheavesGiveFlabbyCosheaves} implies:
\begin{lemma}
For every~$k \geq 1$, the precosheaf~$B_k(\mathcal{K},\cdot)$ over~$M^k$ is a flabby cosheaf.
\end{lemma}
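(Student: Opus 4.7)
The plan is to verify directly that the hypotheses of the cited Proposition~\ref{PropositionSoftSheavesGiveFlabbyCosheaves} are satisfied for the precosheaf $B_k(\mathcal{K},\cdot)$, since the conclusion of that proposition is exactly what we want. Concretely, I need to exhibit a soft sheaf on $M^k$ whose compactly supported sections, with extension-by-zero as the precosheaf maps, recover $B_k(\mathcal{K},\cdot)$.

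First, I would observe that the exterior tensor product $\mathcal{K}^{\boxtimes k} \to M^k$ is itself a smooth, finite-dimensional, locally trivial vector bundle: a product of trivializing charts for $\mathcal{K}$ trivializes $\mathcal{K}^{\boxtimes k}$, and the fibre over $(x_1,\dots,x_k) \in M^k$ is canonically $\gl_n(\mathbb{K})^{\otimes k}$. Thus the assignment $U \mapsto \Gamma(\mathcal{K}^{\boxtimes k}|_U)$ defines a sheaf of abelian groups on $M^k$.

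Next, I would argue this sheaf is soft. Since $M$ is a smooth manifold, so is $M^k$, and it therefore admits smooth partitions of unity subordinate to any open cover. This makes the sheaf of sections of any smooth vector bundle on $M^k$ into a fine sheaf, hence in particular soft: given a closed set $C \subset M^k$, a section defined on some open neighbourhood of $C$ can be cut off by a bump function and extended by zero to all of $M^k$, producing a global section restricting to the original germ on $C$.

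Finally, the precosheaf $B_k(\mathcal{K},\cdot)$ is by construction the compactly supported sections precosheaf of this soft sheaf: $B_k(\mathcal{K}, U) = \Gamma_c(\mathcal{K}^{\boxtimes k}|_U)$, and for $U \subset V$ the structure map is extension by zero, which is well-defined precisely because sections have compact support in $U$. Applying Proposition~\ref{PropositionSoftSheavesGiveFlabbyCosheaves} then yields that $B_k(\mathcal{K},\cdot)$ is a flabby cosheaf, completing the proof. There is no real obstacle here; the content is entirely in the cited proposition, and the work consists only of identifying $\mathcal{K}^{\boxtimes k}$ as a smooth vector bundle over $M^k$ whose section sheaf is soft.
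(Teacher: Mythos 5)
Your proposal is correct and follows essentially the same route as the paper: one identifies $B_k(\mathcal{K},\cdot)$ as the precosheaf of compactly supported sections of the soft sheaf of sections of the smooth vector bundle $\mathcal{K}^{\boxtimes k} \to M^k$ and invokes Proposition~\ref{PropositionSoftSheavesGiveFlabbyCosheaves}. Your added remarks on local triviality of $\mathcal{K}^{\boxtimes k}$ and softness via partitions of unity simply make explicit what the paper leaves implicit.
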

\begin{definition}
Let~$k \geq 1$ and~$q \geq 0$ be integers. 
Define the \emph{$q$-th diagonal} in~$M^k$ via
\begin{align*}
M^k_q 
:= 
\{ (x_1,\dots,x_k) \in M^k : 
|\{x_1,\dots,x_k\}| \leq q\}.
\end{align*}
\end{definition}
\begin{remark}
The~$q$-th diagonals interpolate between the well-known notions of the \emph{thin} diagonal
\begin{align*}
M_\mathrm{thin}^k := M_1^k = \{(x,\dots,x) \in M^k\},
\end{align*}
and the \emph{fat} diagonal
\begin{align*}
M_\mathrm{fat}^k := M_{k-1}^k = \{(x_1,\dots,x_k) \in M^k : \exists i \neq j \text{ s.t. } x_i = x_j\}.
\end{align*}
We have a chain of inclusions:
\begin{align*}
\emptyset = M^k_0 \subset M^k_1 \subset \dots \subset M^k_{k-1} \subset M^k_k = M^k.
\end{align*}
\end{remark}
\begin{definition}
Let~$k, q \geq 1$ be integers. Define the precosheaf~$\Delta_q B_k(\mathcal{K}, \cdot)$ on~$M^k_q$ as follows: For every open set~$V \subset M^k_q$, define
\begin{align*}
\Delta_q B_k(\mathcal{K},W) :=  
\varprojlim_{U \supset W}
B_k(\mathcal{K},U) 
/ 
B_k(\mathcal{K}, U \cap (M^k \setminus M^k_q)).
\end{align*}
Here, the projective limit is taken over all open~$U \subset M^k$ containing~$W$, and the limit maps are induced by the extension maps~$\iota_U^V$ of the cosheaf~$B_k(\mathcal{K}, \cdot)$ for~$U \subset V$. The precosheaf maps are induced by the ones of~$B_k(\mathcal{K},\cdot)$.
\end{definition}
Equivalently,~$\Delta_q B_k(\mathcal{K},V)$ denotes the compactly supported sections of the sheaf of holonomic germs of~$\Gamma(\mathcal{K})$ along~$V \subset M^k_q$. Thus~$\Delta_q B_k(\mathcal{K},\cdot)$ equals the precosheaf of compactly supported sections of a soft sheaf, so Proposition~\ref{PropositionSoftSheavesGiveFlabbyCosheaves} implies:
\begin{lemma}
For all integers~$k, q \geq 1$, the precosheaf~$\Delta_q B_k(\mathcal{K},\cdot)$ on~$M^k_q$ is a flabby cosheaf.
\end{lemma}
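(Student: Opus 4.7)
The plan is to identify $\Delta_q B_k(\mathcal{K},\cdot)$ as the precosheaf of compactly supported sections of a soft sheaf on the subspace $M^k_q$, and then invoke Proposition~\ref{PropositionSoftSheavesGiveFlabbyCosheaves}, exactly as was done for $B_k(\mathcal{K},\cdot)$ above. This is essentially the argument the paragraph preceding the lemma already gestures at, and the work is to make the three pieces precise.

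First I would record that the sheaf $\mathcal{S}$ on $M^k$ given by $U \mapsto \Gamma(\mathcal{K}^{\boxtimes k}\att_U)$ is soft, since it is the sheaf of smooth sections of a finite-dimensional vector bundle over the manifold $M^k$ and smooth partitions of unity yield the required extensions; the associated precosheaf of compactly supported sections is precisely $B_k(\mathcal{K},\cdot)$.

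Next, denote by $j : M^k_q \hookrightarrow M^k$ the inclusion and consider the sheaf of germs of $\mathcal{S}$ along $M^k_q$, constructed as an inverse image sheaf $j^{-1}\mathcal{S}$ on $M^k_q$. I would verify that $j^{-1}\mathcal{S}$ is soft on $M^k_q$: to extend a germ defined on a closed subset $A \subset W \subset M^k_q$, pick a representative section $s$ on some open neighborhood $U \subset M^k$ of $A$, multiply by a smooth bump function on $M^k$ supported in $U$ and equal to $1$ near $A$, and restrict the product to $M^k_q$. The key identification is then that the compactly supported sections of $j^{-1}\mathcal{S}$ over $W \subset M^k_q$ agree with $\Delta_q B_k(\mathcal{K},W)$, once the limit in the definition is unravelled: a compactly supported germ along $W$ is represented by a compactly supported section on some neighborhood $U \subset M^k$ of $W$, and two such sections represent the same germ precisely when their difference lies in $B_k(\mathcal{K}, U \cap (M^k \setminus M^k_q))$. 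Once this identification and its compatibility with the extension-by-zero structure maps are established, the lemma follows immediately from Proposition~\ref{PropositionSoftSheavesGiveFlabbyCosheaves} applied to $j^{-1}\mathcal{S}$.

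The main technical obstacle is the identification in the last step: $M^k_q$ is a rather singular subspace of $M^k$ (not a submanifold, nor in general a smooth subvariety), so some care is required in handling germs along it and the precise form of the (co-)limit defining $\Delta_q B_k(\mathcal{K},W)$. However, both softness and the cosheaf property can be verified by working with smooth bump functions on the ambient manifold $M^k$, so the singular nature of $M^k_q$ does not cause a genuine difficulty; the proof is largely bookkeeping to match the definitions.
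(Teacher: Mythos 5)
Your proposal is correct and takes essentially the same route as the paper: the paper's one-sentence argument (in the paragraph preceding the lemma) is precisely to observe that $\Delta_q B_k(\mathcal{K},\cdot)$ is the precosheaf of compactly supported sections of the sheaf of germs of $\Gamma(\mathcal{K}^{\boxtimes k})$ along $M^k_q$ — a soft sheaf on $M^k_q$ — and then invoke Proposition~\ref{PropositionSoftSheavesGiveFlabbyCosheaves}. Your version spells out that sheaf as the inverse image $j^{-1}\mathcal{S}$, verifies softness via ambient bump functions, and matches the limit in Definition~\ref{DefinitionDiagonalChains} to the compactly-supported-germs description, which is exactly the bookkeeping the paper leaves implicit.
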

\begin{definition}
\label{DefinitionDiagonalChains}
Let~$k, q \geq 1$ be integers, and~$U \subset M$ open.
We define a precosheaf~$\Delta_q C_k(\mathcal{K},\cdot)$ on~$M$, the \emph{$q$-diagonal~$k$-chains} of~$\Gamma(\mathcal{K})$:
\begin{align*}
\Delta_q C_k(\mathcal{K},U) := \left( \Delta_q B_k(\mathcal{K}, U^k \cap M^k_q) \right)_{\Sigma_k}.
\end{align*}
When $U  = M$, we also set the notation
\begin{align*}
\Delta_q C_k^\born(\Gamma_c(\mathcal{K})) 
:=
\Delta_q C_k(\mathcal{K},M) .
\end{align*}
\end{definition}
\begin{remark}
The~$q$-diagonal chains can be viewed as a quotient of the usual bornological complex, i.e. for every~$k,q \geq 1$ there is a natural projection
\begin{align*}
C_k^\born (\Gamma_c(\mathcal{K} \att_U)) 
\to 
\Delta_q C_k(\mathcal{K},U),
\end{align*}
and the Lie algebra differential factors through to a well-defined differential on this new complex.
\end{remark}
\subsection{Cosheaves of compactly supported differential forms}
\label{subsec:SmoothFormsAndKernelCosheaf}
Fix a smooth manifold~$M$ of dimension~$n$, and some~$k \in \mathbb{N}_{0}$ with~$0 \leq k \leq n$. 
\begin{definition}
\label{def:PrecosheavesOfCompactFormsAndZk}
Define the precosheaves~$\Omega^k_c$ and~$Z^k$, respectively given by assigning to an open~$U \subset M$ the compactly supported~$k$-forms~$\Omega^k_c(U)$ and
\begin{align*}
Z^k(U) := \frac{\Omega_c^k(U)}{d_\dR \Omega_c^{k-1}(U)}.
\end{align*}
The extension maps of~$\Omega^k_c$ are induced by the extension of compactly supported forms by zero. They induce the extension maps on the quotient~$Z^k$.
\end{definition}
We find:
\begin{lemma}
The precosheaves~$\Omega^k_c$ and~$Z^k$ over~$M$ are cosheaves. Further,~$\Omega^k_c$ is flabby and~$Z^k$ admits the flabby coresolution
\begin{align}
\label{eq:FlabbyResolutionOfZk}
0 \to \Omega^0_c \to \Omega^1_c \to \dots \to \Omega^k_c \to Z^k \to 0,
\end{align}
where the last nontrivial map is the canonical quotient map, and the other nontrivial maps are given by the de Rham differential.
\end{lemma}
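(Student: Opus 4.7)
The proof assembles three ingredients: partition of unity (for the cosheaf property), the soft-to-flabby principle of Proposition~\ref{PropositionSoftSheavesGiveFlabbyCosheaves} (for flabbiness of $\Omega^k_c$), and the compactly supported Poincaré lemma on Euclidean opens (for exactness of the coresolution).

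First I would verify that $\Omega^k_c$ is a cosheaf. Given an open $U \subset M$ with a cover $\{U_i\}$ and a subordinate partition of unity $\{\rho_i\}$, any $\omega \in \Omega^k_c(U)$ decomposes as $\omega = \sum_i \rho_i \omega$ with $\rho_i \omega \in \Omega^k_c(U_i)$; this yields the surjection $\bigoplus_i \Omega^k_c(U_i) \twoheadrightarrow \Omega^k_c(U)$ required by the cosheaf axiom, and the same partition-of-unity trick applied on pairwise overlaps identifies the relations. Since $Z^k$ is the sectionwise cokernel of the cosheaf morphism $d_\dR \colon \Omega^{k-1}_c \to \Omega^k_c$, it inherits the cosheaf property. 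For flabbiness of $\Omega^k_c$, the sheaf $\Omega^k$ of smooth $k$-forms is soft because $C^\infty(M)$ admits partitions of unity, so Proposition~\ref{PropositionSoftSheavesGiveFlabbyCosheaves} applies directly to its compactly supported sections.

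For the coresolution, I would first note that the sequence is a complex since $d_\dR^2 = 0$ and the quotient $\Omega^k_c \to Z^k$ kills $d_\dR(\Omega^{k-1}_c)$ by construction. Exactness would then be tested on a basis $\mathcal{B}$ of opens of $M$ diffeomorphic to $\mathbb{R}^n$ (with $n = \dim M$): on such $U \cong \mathbb{R}^n$, exactness at $Z^k(U)$ is tautological, exactness at $\Omega^k_c(U)$ is by definition of $Z^k$, and exactness at $\Omega^i_c(U)$ for $0 < i < k$ amounts to $H^i_{\mathrm{dR},c}(\mathbb{R}^n) = 0$ for $i < n$, the classical compactly supported Poincaré lemma. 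Injectivity of $d_\dR$ on $\Omega^0_c(U) = C^\infty_c(\mathbb{R}^n)$ follows from connectedness and non-compactness of $\mathbb{R}^n$, since a compactly supported locally constant function must then vanish. Because $k \leq n$, all the needed vanishing statements lie strictly below the top degree.

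The delicate point I anticipate as the main obstacle is justifying that exactness tested on such a basis $\mathcal{B}$ of Euclidean opens genuinely upgrades to a \emph{flabby coresolution} of $Z^k$ in the cosheaf sense that will be used later in the Čech-type spectral sequence arguments. This invokes the cosheaves-on-a-base formalism of the referenced appendix, by analogy with the sheaf-theoretic fact that morphisms and exactness extend uniquely from a base to the full site. All remaining geometric content—partitions of unity and the compactly supported Poincaré lemma—is classical, so once the base-to-full-cosheaf transfer is in hand the proof is essentially assembled.
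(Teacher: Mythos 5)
Your argument is correct and matches the paper's in substance, but your perceived ``main obstacle'' at the end is a misreading of the definitions. In the paper's Appendix~\ref{AppendixCosheavesOnABase}, a \emph{coresolution} is required to be \emph{locally exact}, not globally exact: for each point $x$ and each neighbourhood $U$, one only needs a smaller $V\subset U$ on which the failure-of-exactness precosheaf dies. Since charts diffeomorphic to $\mathbb{R}^n$ form a base of $M$ and the compactly supported Poincaré lemma (plus injectivity of $d$ on $\Omega^0_c$ of a connected noncompact open, and the tautological exactness at $\Omega^k_c$ and $Z^k$) gives honest exactness on every such chart, local exactness follows immediately --- no appeal to the cosheaves-on-a-base extension theorem is needed. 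That machinery is used elsewhere in the paper to \emph{construct} the product cosheaves $Z^{k_1}\hotimes\dots\hotimes Z^{k_l}$, not to validate this coresolution; Proposition~\ref{PropositionConnectionCechAndCoresolutionHomology} then computes \v{C}ech homology directly from a locally exact flabby coresolution. The only other divergence from the paper is cosmetic: you re-derive the cosheaf property of $\Omega^k_c$ by hand via partitions of unity, whereas the paper cites Proposition~\ref{PropositionSoftSheavesGiveFlabbyCosheaves} (softness of $\Omega^k$) to get both the cosheaf axiom and flabbiness at once, and then invokes Bredon's result that cokernels of cosheaf morphisms are cosheaves to handle $Z^k$. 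Both routes are fine.
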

\begin{proof}
The~$\Omega^k_c$ are flabby cosheaves by Proposition~\ref{PropositionSoftSheavesGiveFlabbyCosheaves}, since they arise as the precosheaf of compactly supported sections of the soft sheaf of differential forms on~$M$. 
The de Rham differential induces a cosheaf morphism~$\Omega^{k-1}_c \to \Omega^k_c$ whose cokernel precosheaf equals exactly~$Z^k$, and cokernels precosheaves of cosheaf morphisms are automatically cosheaves \cite[Chapter VI, Proposition 1.2]{bredon1997sheaf}. 
The Poincaré lemma for~$\Omega_c^\bullet(\mathbb{R}^n)$ implies that the sequence \eqref{eq:FlabbyResolutionOfZk} is locally exact, and hence it is a flabby coresolution for~$Z^k$. The statement is proven.
\end{proof}
The coresolution shows, together with Proposition~\ref{PropositionConnectionCechAndCoresolutionHomology}:
\begin{corollary}
\label{cor:CechHomologyOfZk}
The \v{C}ech homology of~$Z^k$ equals
\begin{align*}
\check{H}_r(M,Z^k)
=
\begin{cases}
Z^k(M) &\text{ if } r = 0, \\
H^{k - r}_{\dR,c}(M) & \text{ if } r > 0,
\end{cases}
\end{align*}
where~$H^\bullet_{\dR,c}(M)$ denotes compactly supported de Rham cohomology of~$M$.
\end{corollary}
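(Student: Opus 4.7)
The plan is to apply Proposition~\ref{PropositionConnectionCechAndCoresolutionHomology} to the flabby coresolution
\[
0 \to \Omega^0_c \to \Omega^1_c \to \dotsb \to \Omega^k_c \to Z^k \to 0
\]
produced in the previous lemma. That proposition identifies the \v{C}ech homology of a cosheaf with the homology of the chain complex of global sections of any flabby coresolution, so the problem reduces to a bookkeeping calculation.

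The coresolution should be indexed so that $\mathcal{F}_r := \Omega^{k-r}_c$ for $0 \leq r \leq k$; this is forced by $\Omega^k_c$ being the augmentation onto $Z^k$. Taking global sections and using the de Rham differentials then produces the chain complex
\[
\Omega^k_c(M) \xleftarrow{d_\dR} \Omega^{k-1}_c(M) \xleftarrow{d_\dR} \dotsb \xleftarrow{d_\dR} \Omega^0_c(M),
\]
with $\Omega^k_c(M)$ in homological degree zero and $\Omega^0_c(M)$ in homological degree $k$.

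The homology is then immediate. In degree $r = 0$ it is the cokernel of $d_\dR \colon \Omega^{k-1}_c(M) \to \Omega^k_c(M)$, which is precisely $Z^k(M)$ by Definition~\ref{def:PrecosheavesOfCompactFormsAndZk}. For $1 \leq r \leq k$ it is
\[
\frac{\ker\bigl(d_\dR \colon \Omega^{k-r}_c(M) \to \Omega^{k-r+1}_c(M)\bigr)}{\mathrm{im}\bigl(d_\dR \colon \Omega^{k-r-1}_c(M) \to \Omega^{k-r}_c(M)\bigr)} = H^{k-r}_{\dR,c}(M),
\]
and for $r > k$ the complex vanishes in that degree, matching $H^{k-r}_{\dR,c}(M) = 0$ under the convention that compactly supported de Rham cohomology vanishes in negative degrees.

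No substantial obstacle is anticipated here: the substantive work, namely the flabbiness and exactness of the coresolution together with the \v{C}ech/coresolution comparison, has been established or cited upstream. The corollary is essentially a matter of unpacking the coresolution and reading off compactly supported de Rham cohomology in the correct degrees.
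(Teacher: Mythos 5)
Your proposal is correct and follows exactly the paper's argument: the paper also deduces the corollary by applying Proposition~\ref{PropositionConnectionCechAndCoresolutionHomology} to the flabby coresolution \eqref{eq:FlabbyResolutionOfZk} and reading off the homology of the global-section complex, with degree $0$ giving $Z^k(M)$ and degree $r\geq 1$ giving $H^{k-r}_{\dR,c}(M)$. Your indexing $\mathcal{F}_r=\Omega^{k-r}_c$ and the vanishing for $r>k$ are handled correctly, so there is nothing to add.
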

We will be working with certain products of the above cosheaves over the cartesian products~$M^1,M^2,M^3,\dots$. Let us formalize what we mean by this:
\begin{lemma}
Let~$l \in \mathbb{N}$ and~$0 \leq k_1,\dots,k_l \leq n$. 
There is a cosheaf~$Z^{k_1} \hotimes \dots \hotimes Z^{k_l}$ over~$M^l$ with the property that for all open~$U_1, \dots, U_l \subset M$ we have
\begin{align*}
(Z^{k_1} \hotimes \dots \hotimes Z^{k_l})(U_1 \times \dots \times U_l) := Z^{k_1}(U_1) \hotimes \dots \hotimes Z^{k_l}(U_l),
\end{align*}
and the cosheaf map associated to an inclusion~$U_1 \times \dots \times U_l \subset V_1 \times \dots \times V_l$  equals the tensor product of the extension maps of the~$Z^{k_1},\dots,Z^{k_l}$.
\end{lemma}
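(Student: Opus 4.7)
The plan is to invoke the extension theorem for cosheaves on a base, as developed in Appendix~\ref{AppendixCosheavesOnABase}. The collection $\mathcal{B}$ of rectangular open sets $U_1 \times \dots \times U_l$ with $U_i \subset M$ open forms a base for the product topology on $M^l$, and it is closed under finite intersections via
\[
(U_1 \times \dots \times U_l) \cap (V_1 \times \dots \times V_l) = (U_1 \cap V_1) \times \dots \times (U_l \cap V_l).
\]
First I would define a precosheaf $T$ on $\mathcal{B}$ by the formula in the statement, with the extension map associated to an inclusion $U_1 \times \dots \times U_l \subset V_1 \times \dots \times V_l$ given as the $l$-fold completed bornological tensor product of the extension maps of the $Z^{k_i}$. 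Functoriality with respect to composition of inclusions then follows from the functoriality of $\hotimes$.

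Next I would verify the cosheaf axiom on the base, i.e., that for every $W = U_1 \times \dots \times U_l \in \mathcal{B}$ and every cover $\{W^\alpha\}$ of $W$ by elements of $\mathcal{B}$, the natural diagram built from double and single intersections exhibits $T(W)$ as the relevant colimit. The key reduction is that by refining any such cover, it suffices to check the axiom on product covers of the form $\{V_1^{\alpha_1} \times \dots \times V_l^{\alpha_l}\}$, where $\{V_i^{\alpha_i}\}_{\alpha_i}$ is an open cover of $U_i$ for each $i$. For a product cover, the corresponding colimit diagram is the external $l$-fold tensor product of the $l$ colimit diagrams that express $Z^{k_i}(U_i)$ as a cosheaf-colimit of the $Z^{k_i}(V_i^{\alpha_i})$. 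Using that $\hotimes$ commutes with direct limits on nuclear strict LF-spaces (Proposition~\ref{prop:PropertiesOfTopTensorProds}) and preserves exact sequences in the nuclear Fréchet setting (Proposition~\ref{prop:ProjTensorProductExact}), the tensor product of colimit diagrams becomes the colimit of the tensor product diagram, which is exactly the cosheaf axiom for $T$ at $W$. The extension theorem from Appendix~\ref{AppendixCosheavesOnABase} then yields a unique cosheaf on $M^l$ extending $T$, which we denote $Z^{k_1} \hotimes \dots \hotimes Z^{k_l}$.

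The main obstacle is the compatibility of $\hotimes$ with the cosheaf colimits on the factors: these colimits are not simply filtered, but are coequalizers built from direct sums of extension maps, so one genuinely needs both the limit-compatibility in Proposition~\ref{prop:PropertiesOfTopTensorProds} and the exactness in Proposition~\ref{prop:ProjTensorProductExact} to commute the tensor product past them. A secondary technical point is ensuring that the refinement step from arbitrary basic covers to product covers does not alter the colimit; this is standard once one works with cofinal systems of covers, but it must be checked that the refinement maps respect the topological structure, which again reduces to continuity and functoriality of $\hotimes$ on the relevant diagrams.
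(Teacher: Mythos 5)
Your overall frame (work on the base $\mathcal{B}$ of rectangles and invoke Theorem~\ref{thm:CosheavesOnBaseExtendToCosheavesOnSpace}) matches the paper, but the way you verify the cosheaf axiom on $\mathcal{B}$ has a genuine gap. Your key reduction -- ``by refining any such cover, it suffices to check the axiom on product covers $\{V_1^{\alpha_1}\times\dots\times V_l^{\alpha_l}\}$'' -- is not available: a cover of a rectangle $U_1\times\dots\times U_l$ by basic rectangles need not admit a refinement by a product cover (already in $\mathbb{R}^2$ one can cover by boxes whose vertical size shrinks as the horizontal coordinate grows, and no single cover $\{V_\alpha\}\times\{W_\beta\}$ refines it). This is exactly the non-cofinality of product covers that the paper itself emphasizes just before Conjecture~\ref{con:CechHomologyOfProduct}; if the reduction you propose worked, that conjecture would essentially be settled by the same argument. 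Moreover, even granting product covers, commuting $\hotimes$ past the cosheaf coequalizer is not covered by the cited facts: the diagrams involve infinite locally convex direct sums $\bigoplus_i \mathcal{P}(U_i)$ (not Fréchet, not strict countable LF-limits of the type in Proposition~\ref{prop:PropertiesOfTopTensorProds}), and Proposition~\ref{prop:ProjTensorProductExact} applies to exact sequences of nuclear Fréchet spaces, not to cokernels of such direct-sum maps, so the claimed ``tensor product of colimit diagrams = colimit of the tensor diagram'' is unsubstantiated.

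The paper avoids verifying the cosheaf axiom for the $Z$-product directly. It first identifies $\Omega^{k_1}_c(U)\hotimes\Omega^{k_2}_c(V)$ with $\Gamma_c\bigl((\Lambda^{k_1}T^*M\boxtimes\Lambda^{k_2}T^*M)\att_{U\times V}\bigr)$, so that $\Omega^{k_1}_c\hotimes\Omega^{k_2}_c$ is a (flabby) cosheaf for free by Proposition~\ref{PropositionSoftSheavesGiveFlabbyCosheaves}; it then produces $Z^{k_1}\hotimes Z^{k_2}$ as an iterated cokernel of the cosheaf morphisms $d_\dR\hotimes\id$ and $\id\hotimes d_\dR$, which is automatically a cosheaf on the base by Proposition~\ref{prop:CokernelCosheaves}. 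The analytic input (closed range of $d_\dR$ on $\Omega^\bullet_{K}(U)$ together with Proposition~\ref{prop:ProjTensorProductExact}, applied at the Fréchet level of a compact exhaustion and then passing to the limit) is used only to compute the \emph{values} of these cokernel cosheaves on rectangles, namely to show they equal $Z^{k_1}(U)\hotimes Z^{k_2}(V)$. To repair your argument, you should either adopt this cokernel route or supply a genuinely new argument for the cosheaf axiom of the $Z$-precosheaf on arbitrary rectangle covers; as written, the two load-bearing steps (refinement to product covers, and exactness/colimit compatibility of $\hotimes$ for the coequalizer diagrams) do not go through.
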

\begin{proof}
For simplicity, we treat the case~$l = 2$, from which the general case easily follows. 
Consider the topological base~$\mathcal{B} := \{U \times V : U, V \subset M \text{ open}\}$ of~$M^2$, and the precosheaf~$\Omega^{k_1}_c \hotimes \Omega^{k_2}_c$ on~$\mathcal{B}$ given by
\begin{align*}
(\Omega^{k_1}_c \hotimes \Omega^{k_2}_c) (U \times V) := \Omega^{k_1}_c(U) \hotimes \Omega^{k_2}_c(V) \quad \forall U \times V \in \mathcal{B}.
\end{align*}
Similar to \eqref{eq:ExteriorTensorProductOfSections}, we have for all open~$U, V \subset M$ the isomorphism
\begin{align*}
\Omega^{k_1}_c(U) \hotimes \Omega^{k_2}_c(V) 
\cong 
\Gamma_c( \left(\Lambda^{k_1} T^*M \boxtimes \Lambda^{k_2} T^*M \right) \att_{U \times V}),
\end{align*}
which extends to an isomorphism of precosheaves on the base~$\mathcal{B}$. The right-hand side defines a cosheaf on~$M^2$ as the cosheaf of compactly supported sections of a vector bundle. Hence, it restricts to a cosheaf on the base~$\mathcal{B}$, and thus~$U \times V \mapsto \Omega^{k_1}_c(U) \hotimes \Omega^{k_2}(V)$ defines a cosheaf on the base~$\mathcal{B}$.
Consider the morphism of cosheaves on~$\mathcal{B}$ given by
\begin{align*}
d_{\dR} \hotimes \id 
:
\Omega^{k_1 - 1}_c \hotimes  \Omega^{k_2}_c
\to
\Omega^{k_1}_c \hotimes \Omega^{k_2}_c.
\end{align*}
If~$\Omega^\bullet_{K}(U)$ denotes the Fréchet space of differential forms on an open set~$U$ whose support is contained in a compact set~$K$, and~$\{K_n\}_{n \geq 1}$ and~$\{L_n\}_{n \geq 1}$ denote compact exhaustions of open sets~$U,V \subset M$, then the image of the morphism~$d \hotimes \id$ at the open set~$U \times V$ is
\begin{align*}
\varinjlim d_\dR \Omega^{k_1 -1}_{K_n}(U) \hotimes \Omega^{k_2}_{L_n}(V)
=
d_\dR \Omega^{k_1 -1}_{c}(U) \hotimes \Omega^{k_2}_{c}(V).
\end{align*}
The range of the de Rham differential~$d: \Omega^k_{K_n}(U) \to \Omega^{k+1}_{K_n}(U)$ is closed (c.f. Lemma~\ref{LemmaFlatDeRhamClosedAndComplemented}), so using Proposition~\ref{prop:ProjTensorProductExact} we deduce
\begin{align*}
\frac{\Omega^{k_1}_{c}(U) \hotimes \Omega^{k_2}_{c}(V)}
{d_\dR \Omega^{k_1 -1}_{c}(U) \hotimes \Omega^{k_2}_{c}(V)}
&\cong
\varinjlim 
\frac
{\Omega^{k_1}_{K_n}(U) \hotimes \Omega^{k_2}_{L_n}(V)}
{d_\dR \Omega^{k_1 -1}_{K_n}(U) \hotimes \Omega^{k_2}_{L_n}(V)}
\\
&\cong 
\varinjlim 
\frac
{\Omega^{k_1}_{K_n}(U)}
{d_\dR \Omega^{k_1 -1}_{K_n}(U)} 
\hotimes 
\Omega^{k_2}_{L_n}(V)
\cong
Z^{k_1}_c(U) \hotimes \Omega^{k_2}_c(V).
\end{align*}
By Proposition~\ref{prop:CokernelCosheaves} the cokernel of~$d_{\dR} \hotimes \id~$ defines a cosheaf on the base~$\mathcal{B}$, and, by the previous calculations, this cosheaf on~$\mathcal{B}$ assumes on~$U \times V \in \mathcal{B}$ the shape~$Z^{k_1}(U) \hotimes \Omega^{k_2}(V)$; we denote this cosheaf by~$Z^{k_1} \hotimes \Omega^{k_2}$.
Analogously by considering the cokernel of
\begin{align*}
Z^{k_1} \hotimes  \Omega^{k_2 - 1}_c
\stackrel{\id \hotimes d_{\dR}}{\to}
Z^{k_1} \hotimes \Omega^{k_2}_c
\end{align*}
we find the cosheaf~$Z^{k_1} \hotimes Z^{k_2}$ on the base~$\mathcal{B}$, which admits for all~$U \times V \in \mathcal{B}$ the desired local form 
\begin{align*}
(Z^{k_1} \hotimes Z^{k_2})(U \times V) = Z^{k_1}(U) \hotimes Z^{k_2}(V).
\end{align*}
Now the statement follows since a cosheaf on $\mathcal{B}$ extends uniquely to a cosheaf on $M$ by Theorem~\ref{thm:CosheavesOnBaseExtendToCosheavesOnSpace}.
\end{proof}
Unfortunately, we are currently not able to calculate the \v{C}ech homology of these product cosheaves. Let us remark the difficulties. Consider open covers~$\mathcal{U}, \mathcal{V}$ of~$M$, and the arising product cover~$\mathcal{U} \times \mathcal{V} := \{U \times V : U \in \mathcal{U}, V \in \mathcal{V}\}$ of~$M^2$. Then one can  deduce
\begin{align*}
\check{H}_\bullet(\mathcal{U} \times \mathcal{V}, Z^{k_1} \hotimes Z^{k_2})
\cong
\check{H}_\bullet(\mathcal{U}, Z^{k_1})
\hotimes
\check{H}_\bullet(\mathcal{V}, Z^{k_2}),
\end{align*}
using that the respective \v{C}ech complex for~$Z^{k_1} \hotimes Z^{k_2}$ factorizes, together with a direct limit argument and the Künneth formula from Theorem~\ref{CorollarySchwartzKuenneth}.
However, product covers~$\mathcal{U} \times \mathcal{V}$ do \emph{not} generally constitute a cofinal subsystem in the directed system of open covers of~$M^2$, and as a consequence, the \v{C}ech homology can not be deduced from the associated projective limit. Without this cofinality, it is unclear to the author how to arrive at a Künneth theorem in the topological setting, such as \cite{cassa1973formule} and \cite{kaup1967kuenneth} – see also \cite{kaup1968topologische}, where the non-cofinality of product covers is acknowledged. We state the likely Künneth formula for our setting as a conjecture.
\begin{conjecture}
\label{con:CechHomologyOfProduct}
If $\mathcal{U}$ is a cover of $M^l$ so that any intersection of elements in $U$ are diffeomorphic to a finite union of Euclidean spaces, then
\begin{align*}
\check{H}_\bullet(\mathcal{U}, Z^{k_1} \hotimes \dots \hotimes Z^{k_l})
\cong
\check{H}_\bullet(M, Z^{k_1}) 
\hotimes 
\dots 
\hotimes 
\check{H}_\bullet(M, Z^{k_l}).
\end{align*}
\end{conjecture}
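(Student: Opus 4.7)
The plan is to avoid taking a direct limit of \v{C}ech complexes over non-cofinal product covers by instead constructing a flabby multicomplex coresolution of $Z^{k_1} \hotimes \dots \hotimes Z^{k_l}$ on $M^l$ and computing via a \v{C}ech--coresolution bicomplex. For each factor, the sequence $0 \to \Omega^0_c \to \dots \to \Omega^{k_i}_c \to Z^{k_i} \to 0$ is a flabby coresolution on $M$, and the completed external tensor product of these $l$ coresolutions forms a multicomplex on $M^l$ whose entries $\Omega^{j_1}_c \hotimes \dots \hotimes \Omega^{j_l}_c \cong \Gamma_c(\Lambda^{j_1}T^*M \boxtimes \dots \boxtimes \Lambda^{j_l}T^*M)$ are flabby cosheaves by Proposition~\ref{PropositionSoftSheavesGiveFlabbyCosheaves}. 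That this total complex genuinely coresolves $Z^{k_1} \hotimes \dots \hotimes Z^{k_l}$ should follow by iterating Proposition~\ref{prop:ProjTensorProductExact} in the nuclear Fréchet setting, using that the de Rham differentials on $\Omega^\bullet_c(U)$ have closed range.

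Applying $\check{C}_\bullet(\mathcal{U}, -)$ termwise to this flabby coresolution yields a double complex with two associated spectral sequences. Filtering by \v{C}ech degree first and invoking the cosheaf analogue of acyclicity for flabby cosheaves, the $E^1$-page should concentrate on the \v{C}ech-degree-zero row, which is the complex of global sections $\Omega^{j_1}_c(M) \hotimes \dots \hotimes \Omega^{j_l}_c(M)$; the spectral sequence then abuts to $\check{H}_\bullet(\mathcal{U}, Z^{k_1} \hotimes \dots \hotimes Z^{k_l})$. This total complex of global sections is the completed tensor product of the de Rham complexes $\Omega^0_c(M) \to \dots \to \Omega^{k_i}_c(M)$, and since each is nuclear Fréchet with closed-range differential, iterated Künneth via Corollary~\ref{CorollarySchwartzKuenneth} produces a tensor product of factor cohomologies, which are identified with $\check{H}_\bullet(M, Z^{k_i})$ by Corollary~\ref{cor:CechHomologyOfZk}. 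Reassembling the factors yields the claimed isomorphism.

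The principal obstacle is the cosheaf-theoretic input used in the second paragraph, namely the vanishing $\check{H}_p(\mathcal{U}, \Omega^{j_1}_c \hotimes \dots \hotimes \Omega^{j_l}_c) = 0$ for $p > 0$. For a flabby cosheaf realized as compactly supported sections of a soft sheaf this is expected by analogy with Cartan's acyclicity theorem for flabby sheaves, and the good-cover hypothesis on $\mathcal{U}$ should enter precisely by supplying a partition-of-unity argument on each intersection (a finite disjoint union of Euclidean open sets). The delicate point is that the partition of unity must act compatibly with the completed bornological tensor structure rather than the algebraic tensor product, so that the dualized Cartan argument survives passage to the topological completion; this is exactly where the conjectural step is concentrated, and it plausibly requires either a refined cosheaf theory in the bornological category or a direct verification of higher \v{C}ech vanishing on the specific good cover at hand.
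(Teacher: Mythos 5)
This statement is explicitly a \emph{conjecture} in the paper, not a theorem; the author states immediately before it that they ``are currently not able to calculate the \v{C}ech homology of these product cosheaves,'' the obstruction being that product covers are not cofinal in the system of all open covers of $M^l$. So there is no in-paper proof to compare against, and you should be evaluating your own argument as a potential new contribution.

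Your general strategy --- form the completed boxed multicomplex of the $l$ flabby coresolutions $0\to\Omega^0_c\to\dots\to\Omega^{k_i}_c\to Z^{k_i}\to 0$, apply $\check C_\bullet(\mathcal{U},-)$, and compare the two spectral sequences of the resulting bicomplex --- is reasonable and does cleverly sidestep the cofinality problem. However, you have misidentified the principal gap. The vanishing $\check H_p(\mathcal{U},\Omega^{j_1}_c\hotimes\dots\hotimes\Omega^{j_l}_c)=0$ for $p>0$ is not a difficulty at all: these are flabby cosheaves (they are compactly supported sections of soft sheaves on $M^l$, by Proposition~\ref{PropositionSoftSheavesGiveFlabbyCosheaves}), and Corollary~\ref{CorollaryFlabbyCosheavesTrivialCech} gives trivial higher \v{C}ech homology with respect to \emph{any} cover, with no partition-of-unity argument in the bornological category required. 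The genuine gap sits in the other filtration: to collapse the coresolution-degree spectral sequence onto the $q=0$ column you need \emph{pointwise} exactness of the total coresolution complex on every finite intersection $U_{i_0}\cap\dots\cap U_{i_p}$ of elements of $\mathcal{U}$ --- this is precisely the hypothesis of Proposition~\ref{PropositionConnectionCechAndCoresolutionHomology}(ii), and it is strictly stronger than the local exactness that makes the coresolution a coresolution. The conjecture's hypothesis only gives that these intersections are diffeomorphic to finite disjoint unions of Euclidean spaces inside $M^l$; they are generally \emph{not} products $V_1\times\dots\times V_l$ of opens in $M$, so the K\"unneth argument (Corollary~\ref{CorollarySchwartzKuenneth}), which only works factorwise, cannot be invoked. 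One would need a ``Poincar\'e lemma'' for the truncated boxed compactly supported de Rham multicomplex on arbitrary non-product Euclidean opens of $M^l$, together with the closed-range properties needed to control the bornological quotients --- and these are not supplied by anything in the paper. So your proposal usefully reframes the difficulty but does not close it, and the remaining obstruction is in a different place than you say.
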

\subsection{A globalizing \v{C}ech double complex}
We now use the tools developed so far to examine homology of nontrivial~$\gl_t(\mathbb{K})$-bundles. The idea of using local-to-global spectral sequences goes back to Bott, Segal, Gelfand and Fuks, who used this strategy to describe the continuous Lie algebra cohomology of vector fields, see \cite{bott1977cohomology} and \cite{gelfand1969cohomologies}. Our approach will transfer their method to our current setting, in the spirit of the recently developed theory of factorization algebras, see \cite{costello2016factorization}.
\begin{definition}
Let~$q \geq 1$ and~$\mathcal{U} := \{U_\alpha\}$ an open cover of~$M$. We define the \emph{$q$-diagonal double complex associated to~$\mathcal{U}$} as the following double complex
\[\begin{tikzcd}
	\vdots & \vdots \\
	{\bigoplus_\alpha \Delta_q C_2(\mathcal{K},U_\alpha)} & 
	{\bigoplus_{\alpha,\beta} \Delta_q C_2(\mathcal{K},U_\alpha \cap U_\beta)} & 
	\dots \\
	{\bigoplus_\alpha \Delta_q C_1(\mathcal{K},U_\alpha)} & 
	{\bigoplus_{\alpha,\beta} \Delta_q C_1(\mathcal{K},U_\alpha \cap U_\beta)} & 
	\dots
	\arrow[from=2-1, to=3-1]
	\arrow[from=3-2, to=3-1]
	\arrow[from=2-2, to=2-1]
	\arrow[from=2-2, to=3-2]
	\arrow[from=1-1, to=2-1]
	\arrow[from=1-2, to=2-2]
	\arrow[from=3-3, to=3-2]
	\arrow[from=2-3, to=2-2]
\end{tikzcd}\]
The horizontal differentials are induced by the \v{C}ech differentials associated to the cosheaves~$\Delta_q B_k(\mathcal{K},\cdot)$, and the vertical differentials arise from the Chevalley-Eilenberg differential.
\end{definition}
\begin{remark}
We disregard the zeroeth homology groups on purpose, since they do not behave quite as neatly in the sheaf-theoretic sense, and are only connected to the complex by a zero differential.
\end{remark}
It will prove insightful to calculate the spectral sequences associated to this double complex in certain cases. To this end, we borrow a special notion of open covers from the theory of homotopy sheaves and factorization algebras:
\begin{definition} \cite[Definition 2.9]{debrito2013manifold}
Fix~$q,n \geq 1$. 
Let~$M$ be a smooth manifold of dimension~$n$, and~$\mathcal{U}$ an open cover of~$M$. We say that~$\mathcal{U}$ is \emph{$q$-good} if:
\begin{itemize}
\item[i)] All intersections of elements in~$\mathcal{U}$ are diffeomorphic to the disjoint union of at most~$q$ copies of~$\mathbb{R}^n$.
\item[ii)] If~$\{x_1,\dots,x_q\} \in M$ is a collection of~$q$ points, there is some~$U \in \mathcal{U}$ containing all~$x_1,\dots,x_q$.
\end{itemize}
\end{definition}
\begin{remark}
Note that~$1$-good covers are exactly the well-known \emph{good covers}, i.e. open covers so that all intersections of elements of the cover are diffeomorphic to~$\mathbb{R}^n$.
\end{remark}
Using the existence of Riemannian metrics on smooth manifolds, one can always construct such a cover:
\begin{proposition}\cite[Proposition 2.10]{debrito2013manifold}
For all~$q \in \mathbb{N}$ and every smooth manifold~$M$, there exists a~$q$-good cover on~$M$.
\end{proposition}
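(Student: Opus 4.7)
The plan is to construct~$\mathcal{U}$ explicitly via Riemannian geometry, generalizing the classical construction of an ordinary good cover. First I would fix any Riemannian metric~$g$ on~$M$ and recall the standard fact that at each point~$x \in M$ there is a basis of \emph{geodesically convex} open neighborhoods; each such neighborhood is diffeomorphic to~$\mathbb{R}^n$, and the intersection of two geodesically convex sets is again geodesically convex, hence empty or diffeomorphic to~$\mathbb{R}^n$.

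Next I would define~$\mathcal{U}$ to consist of disjoint unions of at most~$q$ geodesically convex balls, indexed by finite subsets~$F = \{x_1, \dots, x_k\} \subset M$ with~$k \leq q$ together with radii~$r_i$ small enough that the balls~$B_{r_i}(x_i)$ are pairwise disjoint. Property (ii) is then immediate: given any configuration of at most~$q$ points in~$M$, choose sufficiently small pairwise disjoint convex balls around each to produce an element of~$\mathcal{U}$ containing them all.

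The harder task is verifying property (i). The intersection of two elements~$B_1 \sqcup \dots \sqcup B_k$ and~$C_1 \sqcup \dots \sqcup C_l$ decomposes as the disjoint union of all pairwise intersections~$B_i \cap C_j$, each of which is empty or geodesically convex, hence diffeomorphic to~$\mathbb{R}^n$. Naively this yields up to~$kl \leq q^2$ components, whereas the definition demands at most~$q$. The main obstacle, therefore, is controlling this multiplicity. I expect to resolve this by coupling the radii in each element of~$\mathcal{U}$ to the mutual distances among the chosen centers, so that no ball~$C_j$ of one element can meet more than one ball~$B_i$ of the other simultaneously; this reduces the number of nonempty pieces of the intersection back down to at most~$\max(k,l) \leq q$. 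The careful combinatorial execution of this shrinking argument is the crux of the proof and is the content of the cited reference \cite{debrito2013manifold}, on which we rely here.
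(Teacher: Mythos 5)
The paper does not prove this statement; it is a cited result, with only the one-line hint that Riemannian metrics make the construction possible, so there is no proof in the paper to compare against directly. Your Riemannian-geometric plan — taking disjoint unions of geodesically convex metric balls and shrinking radii to control overlaps — elaborates exactly that hint, and you correctly isolate the real obstacle, namely that the naive component count for $U \cap V$ can reach $q^2$. The remedy you propose, tying the radius $\epsilon_V$ of each element $V$ to the minimal separation of its own centers (say $\epsilon_V < \tfrac14 \min_{j\neq j'} d(y_j,y_{j'})$), is also the correct one: a triangle-inequality argument then shows that for any pair $U,V$ with $\epsilon_U\le\epsilon_V$, each ball of $U$ meets at most one ball of $V$, so $U\cap V$ has at most $q$ connected components, each geodesically convex and hence diffeomorphic to $\mathbb{R}^n$.

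There is nonetheless one genuine gap: condition (i) of the definition of a $q$-good cover concerns \emph{all} finite intersections $U_{i_0}\cap\cdots\cap U_{i_p}$, not only pairwise ones, and your proposal addresses only the pairwise case. Fortunately the omission is not fatal, because the same constraint handles the general case: order the $U_{i_r}$ by radius; by the same triangle-inequality estimate each ball of the smallest-radius element meets at most one ball of every other $U_{i_r}$, so the component count of the whole intersection is bounded by the number of balls in the smallest element, hence $\le q$, and each component is an intersection of geodesically convex sets, hence again geodesically convex and diffeomorphic to $\mathbb{R}^n$. So your sketch is recoverable, but as written it leaves the higher-order case unaddressed and outsources the combinatorial execution entirely to the reference.
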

The following statement is straightforward to prove and shows us the use of this notion:
\begin{lemma}
\label{LemmaExponentiationOfQGoodCovers}
If~$\mathcal{U}$ is a~$q$-good cover of~$M$, then the sets 
\begin{align*}
\mathcal{U}^i := \{U^i \subset M^i : U \in \mathcal{U}\}, \quad
\mathcal{U}^i_q := 
\{U^i \cap M^i_q : U \in \mathcal{U}\}
\end{align*}
are open covers of~$M^i$ and~$M^i_q$, respectively, for all~$i = 1,\dots, q$
\end{lemma}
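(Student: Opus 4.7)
The plan is to verify the two required properties (openness and covering) directly from the definitions of $q$-good cover and of the $q$-th diagonal. Since the statement is essentially a bookkeeping exercise, I do not expect any serious obstacle; the only point worth articulating carefully is that we consistently count \emph{distinct} coordinates when comparing with the defining condition $|\{x_1,\dots,x_i\}|\leq q$ of $M^i_q$.

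First I would handle openness. For every $U \in \mathcal{U}$ the set $U^i \subseteq M^i$ is a finite product of open sets and hence open in the product topology, so $\mathcal{U}^i$ is a family of open sets in $M^i$. Giving $M^i_q$ the subspace topology inherited from $M^i$, the intersection $U^i \cap M^i_q$ is automatically open in $M^i_q$, so $\mathcal{U}^i_q$ is a family of open sets in $M^i_q$.

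Next I would verify the covering property. Fix $i$ with $1 \leq i \leq q$ and pick any $(x_1,\dots,x_i) \in M^i$. Then the underlying set $\{x_1,\dots,x_i\} \subseteq M$ has cardinality at most $i \leq q$, so by property (ii) of a $q$-good cover there exists $U \in \mathcal{U}$ with $\{x_1,\dots,x_i\} \subseteq U$. This gives $(x_1,\dots,x_i) \in U^i$, showing that $\mathcal{U}^i$ covers $M^i$. For $\mathcal{U}^i_q$ the argument is the same except that the bound $|\{x_1,\dots,x_i\}| \leq q$ is now built directly into the definition of $M^i_q$ rather than coming from $i \leq q$; applying property (ii) again yields $(x_1,\dots,x_i) \in U^i \cap M^i_q$, and hence $\mathcal{U}^i_q$ covers $M^i_q$.

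Putting these two observations together proves the lemma. The only step where one might slip is to forget that the defining inequality for membership in $M^i_q$ allows us to apply property (ii) even when $i > q$, but in the range $i \leq q$ considered here this is not even needed for $\mathcal{U}^i$, and for $\mathcal{U}^i_q$ it is precisely the content of the definition of the $q$-th diagonal.
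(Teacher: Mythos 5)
Your verification is correct and is exactly the straightforward argument the paper has in mind (it omits the proof, calling the lemma "straightforward to prove"): openness is immediate from the product and subspace topologies, and the covering property follows from condition (ii) of a $q$-good cover applied to the at most $q$ distinct coordinates of a point, padding the collection to $q$ points if necessary. No gaps.
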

\begin{remark}
Another reason the notion of~$q$-good covers will be useful to us: since all intersections of elements in a~$q$-good cover~$\mathcal{U}$ are disjoint unions of Euclidean spaces, the section spaces~$\Gamma_c(\mathcal{K})$ restricted to these open sets becomes a finite direct sum of trivial gauge algebras~$\gl_n(C^\infty_c(\mathbb{R}^n))$.
Thus the Lie algebra homology of~$\Gamma_c(\mathcal{K}\att_U)$ for such~$U$ can be approached with the methods of the previous sections.
\end{remark}
\begin{proposition}
For every~$q,r \geq 1$ and every~$q$-good cover~$\mathcal{U}$ of~$M$, the~$r$-th row of the~$q$-diagonal double complex associated to~$\mathcal{U}$ are acyclic in nonzero degree, and their zeroeth homology is~$\Delta_q C_r^\born(M)$.
\end{proposition}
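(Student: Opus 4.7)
The plan is to identify the $r$-th row with the Čech complex on $M$ of the precosheaf $U \mapsto \Delta_q C_r(\mathcal{K}, U)$ associated to $\mathcal{U}$, to lift this to a Čech complex on $M^r_q$ of the flabby cosheaf $\Delta_q B_r(\mathcal{K}, \cdot)$, and then to conclude using the vanishing of higher Čech homology for flabby cosheaves together with the exactness of $\Sigma_r$-coinvariants in characteristic zero.

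First, unpack Definition~\ref{DefinitionDiagonalChains}. Using the elementary identity $(U_{\alpha_0} \cap \dots \cap U_{\alpha_p})^r = U_{\alpha_0}^r \cap \dots \cap U_{\alpha_p}^r$, set $V_\alpha := U_\alpha^r \cap M^r_q$ for every $\alpha$; then
\begin{align*}
\Delta_q C_r(\mathcal{K}, U_{\alpha_0} \cap \dots \cap U_{\alpha_p})
=
\bigl( \Delta_q B_r(\mathcal{K}, V_{\alpha_0} \cap \dots \cap V_{\alpha_p}) \bigr)_{\Sigma_r}.
\end{align*}
By Lemma~\ref{LemmaExponentiationOfQGoodCovers} (extended to $r > q$ using condition (ii) of $q$-goodness, which ensures that any tuple in $M^r_q$ has all of its at most $q$ distinct entries contained in a common $U_\alpha$), the family $\mathcal{V} = \{V_\alpha\}$ is an open cover of $M^r_q$. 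Hence the $r$-th row of the $q$-diagonal double complex equals the $\Sigma_r$-coinvariants of the Čech complex $\check{C}_\bullet(\mathcal{V}, \Delta_q B_r(\mathcal{K}, \cdot))$ on $M^r_q$.

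Next, since $\Delta_q B_r(\mathcal{K}, \cdot)$ is a flabby cosheaf on $M^r_q$, the cosheaf-theoretic analogue of Leray's vanishing theorem (via Proposition~\ref{PropositionConnectionCechAndCoresolutionHomology} and the surrounding discussion in the appendix on cosheaves) implies that $\check{C}_\bullet(\mathcal{V}, \Delta_q B_r(\mathcal{K}, \cdot))$ is acyclic in positive degree, with zeroeth homology equal to the global sections $\Delta_q B_r(\mathcal{K}, M^r_q)$. Because $\mathbb{Q} \subset \mathbb{K}$ and $\Sigma_r$ is finite, the averaging idempotent exists, so $\Sigma_r$-coinvariants define an exact functor on $\mathbb{K}[\Sigma_r]$-modules and commute with homology. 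Combining these two facts yields acyclicity of the $r$-th row in positive degree and
\begin{align*}
H_0(r\text{-th row})
=
\bigl( \Delta_q B_r(\mathcal{K}, M^r_q) \bigr)_{\Sigma_r}
=
\Delta_q C_r(\mathcal{K}, M)
=
\Delta_q C_r^\born(\Gamma_c(\mathcal{K})),
\end{align*}
as claimed.

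The main obstacle is the invocation of Čech vanishing for the flabby cosheaf $\Delta_q B_r(\mathcal{K}, \cdot)$ with respect to the specific cover $\mathcal{V}$, rather than merely in the projective limit defining Čech homology. If the appendix only establishes such vanishing along cofinal systems of covers, one must additionally argue that $\mathcal{V}$ is itself cofinal enough, or, alternatively, that iterated refinement of $\mathcal{U}$ by further $q$-good covers produces refinements of $\mathcal{V}$ of the same exponentiated form, so that the Čech complex above is unchanged (up to isomorphism on homology) under passage to the limit. Beyond this, each step is standard bookkeeping in the cosheaf formalism developed in Section~\ref{subsec:SmoothFormsAndKernelCosheaf} and the cosheaf appendix.
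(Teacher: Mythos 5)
Your proposal is correct and follows essentially the same route as the paper: identify the $r$-th row with the $\Sigma_r$-coinvariants of the \v{C}ech complex of the flabby cosheaf~$\Delta_q B_r(\mathcal{K},\cdot)$ with respect to the exponentiated cover~$\mathcal{U}^r_q$, use exactness of coinvariants over a field containing~$\mathbb{Q}$, and conclude from flabbiness. The obstacle you flag at the end is not actually an issue in this paper, since Corollary~\ref{CorollaryFlabbyCosheavesTrivialCech} gives trivial higher \v{C}ech homology of a flabby cosheaf for \emph{every} open cover (via the coresolution~$0 \to \mathcal{P} \stackrel{\id}{\to} \mathcal{P} \to 0$ and Proposition~\ref{PropositionConnectionCechAndCoresolutionHomology}(ii)), so no cofinality argument is needed; your explicit check that~$\mathcal{U}^r_q$ covers~$M^r_q$ also for~$r > q$ is a welcome detail left implicit in the paper.
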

\begin{proof}
Recall that 
\begin{align*}
\Delta_q C_r(\mathcal{K},U) =
\left( \Delta_q B_r(\mathcal{K}, U^r \cap M^r_q) \right)_{\Sigma_r}.
\end{align*}
Hence, the~$r$-th row of the double complex is equal to the antisymmetrized \v{C}ech complex of the cosheaf~$\Delta_q B_r(\mathcal{K},\cdot)$ with respect to the cover~$\mathcal{U}^r_q$, using the notation of Lemma~\ref{LemmaExponentiationOfQGoodCovers}. 
Taking coinvariants with respect to the finite group~$\Sigma_r$ is an exact functor, hence the calculation of the homologies of the rows reduces to calculating the \v{C}ech homology of the cosheaves~$\Delta_q B_r(\mathcal{K},\cdot)$ and taking coinvariants afterwards.
But then the statement is a direct consequence of the fact that all these cosheaves are flabby, and thus have trivial \v{C}ech homology, see Corollary~\ref{CorollaryFlabbyCosheavesTrivialCech}.
\end{proof}
\begin{corollary}
\label{CorollarySpectralSequenceConvergesToDiagonal}
The total complex of the~$q$-diagonal double complex associated to a~$q$-good cover~$\mathcal{U}$ has homology equal to diagonal homology
\begin{align*}
\Delta_q H_\bullet^\born(\Gamma_c(\mathcal{K})) 
:=
H_\bullet(\Delta_q C_\bullet^\born(\Gamma_c(\mathcal{K}))).
\end{align*}
\end{corollary}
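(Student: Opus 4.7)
The plan is to read off the statement from the previous proposition by means of one of the two standard spectral sequences associated to a double complex. Denote the $q$-diagonal double complex associated to $\mathcal{U}$ by $E^{\bullet,\bullet}$, where the first index counts the \v Cech (horizontal) direction and the second the Chevalley--Eilenberg (vertical) direction, and write $T_\bullet$ for its total complex.

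First I would filter $T_\bullet$ by the column filtration $F^p T_n := \bigoplus_{i \geq p} E^{i, n-i}$ and consider the associated spectral sequence. Its $E_1$-page is obtained by taking homology in the vertical direction, so a priori the $E_1$-terms may be unbounded in the horizontal direction; however, since $\mathcal{U}$ can be assumed to be countable and the double complex lives in the first quadrant, the filtration is bounded below and exhaustive, and each total degree is the direct sum of finitely many terms on each page, so the spectral sequence converges to $H_\bullet(T_\bullet)$. I would then switch to the spectral sequence associated to the row filtration, which takes horizontal homology first; the preceding proposition says exactly that for every fixed row $r \geq 1$, the \v Cech complex of the flabby cosheaf $\Delta_q B_r(\mathcal{K},\cdot)$ with respect to the cover $\mathcal{U}^r_q$ is acyclic in positive degrees and has zeroeth homology~$\Delta_q C_r(\mathcal{K}, M) = \Delta_q C_r^\born(\Gamma_c(\mathcal K))$. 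Hence the $E^1$-page of this row-spectral sequence is concentrated in the single column $p = 0$, with entries $\Delta_q C_r^\born(\Gamma_c(\mathcal K))$.

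Next I would observe that the induced vertical differential on this surviving column is, by construction of the double complex, exactly the Chevalley--Eilenberg differential descended to $\Delta_q$-chains. Consequently the $E^2$-page of this spectral sequence is $\Delta_q H_\bullet^\born(\Gamma_c(\mathcal K))$ in the $p=0$ column and zero elsewhere, so it collapses and converges to $\Delta_q H_\bullet^\born(\Gamma_c(\mathcal K))$. Comparing with the other spectral sequence, both converge to $H_\bullet(T_\bullet)$, which yields the claimed identification.

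The only place I expect friction is checking that the conventions really give a convergent spectral sequence in our setting: the double complex is supported in the first quadrant but potentially unbounded horizontally if $\mathcal{U}$ is infinite, so I would either restrict to locally finite $q$-good covers (which exist by a standard partition-of-unity style argument on a paracompact manifold) or invoke the convergence of the column-filtration spectral sequence for first-quadrant double complexes directly; neither is substantive, but both need to be mentioned to justify the collapsing argument.
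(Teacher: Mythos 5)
Your proposal is correct and follows essentially the same argument the paper intends: the preceding proposition shows that each row of the double complex is the \v{C}ech complex of a flabby cosheaf and hence acyclic in positive degree with zeroeth homology $\Delta_q C_r^\born(\Gamma_c(\mathcal{K}))$, so the spectral sequence obtained by filtering by rows (taking horizontal homology first) collapses at $E^2$ to $\Delta_q H_\bullet^\born(\Gamma_c(\mathcal{K}))$. The detour through the column filtration and the concern about convergence for infinite covers are unnecessary: since the Lie algebra degree starts at $1$, in each total degree $n$ the \v{C}ech index $p$ is bounded by $n-1$, so only finitely many bidegrees contribute and both spectral sequences of this first-quadrant double complex automatically converge to the homology of the direct-sum total complex regardless of whether $\mathcal{U}$ is locally finite.
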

Hence the spectral sequence associated to the horizontal filtration will give us information about the diagonal bornological homology, granted that we understand the homology of the restricted algebras on all~$U \in \mathcal{U}$, and granted that we understand the Cech homology associated to the precosheaves of the homology groups~$U \mapsto H_\bullet (\Delta_q C_\bullet(\mathcal{K},U))$.
For a simple presentation of the following spectral sequence, let us introduce a piece of notation. Fix some~$n \in \mathbb{N}$. Then we set, for all~$k \in \mathbb{N}_0$:
\begin{align}
\label{FormulaNotationXiNK}
\xi_n(k) := 
\min \{k, n + (k - n \mod 2)\},
\end{align}
in other words, the sequence~$\{\xi_n(k)\}_{k \geq 0}$ assumes the shape
\begin{align*}
0, 1, 2, \dots, n - 1, n, n + 1, n, n+1, n, \dots
\end{align*}
Then, due to the periodic nature of cyclic homology, we can rephrase Corollary~\ref{cor:CyclicHomologyOfCompactlySupported} in the following way:
\begin{align}
\label{eq:CyclicHomologyInXiNotation}
H_k^{\lambda, \born}(C^\infty(\mathbb{R}^n)) 
\cong
\frac
{\Omega_c^{\xi_n(k)}(\mathbb{R}^n) }
{d_{\dR} \Omega_c^{\xi_n(k) - 1}(\mathbb{R}^n)} \quad \forall k \geq 0.
\end{align}
\begin{theorem}
\label{TheoremSpectralSequenceForGammaK}
Let~$M$ be a manifold of finite dimension~$n$ and~$P \to M$ a principal~$GL_t(\mathbb{R})$-bundle. Let ~$ \Ad(P) \to M$ be the associated adjoint bundle, and~$q := \lfloor \tfrac{t - 1}{2} \rfloor$. Denote by~$Z^k$ for~$k \geq 0$ the cosheaves over~$M$ from Definition~\ref{def:PrecosheavesOfCompactFormsAndZk} for~$k \geq 0$.
There is a homological first-quadrant spectral sequence~$\{E^\bullet_{r,s}\}_{r,s \geq 0}$ which converges to the homology of the~$(q+1)$-diagonal complex of~$\Gamma_c(\Ad(P))$, i.e. with the notation of Definition~\ref{DefinitionDiagonalChains} we have the convergence
\begin{align*}
E_{r,s}^k \implies 
\Delta_{q+1} H_{r+s}^\born ( \Gamma_c(\Ad(P))).
\end{align*}
For~$r \geq 0$ and~$1  \leq s \leq q$, we can express the second page~$E^2_{r,s}$ with the notation~\eqref{FormulaNotationXiNK}:
\begin{align*}
E^2_{r,s} = 
\bigoplus_{k \geq 1}
\left(
\bigoplus_{s_1 + \dots + s_k = s}
\check{H}_{r_1}
\left(\mathcal{U}^k, Z^{\xi_n(s_1 - 1)} \hotimes \dots \hotimes Z^{\xi_n(s_k - 1)} \right)
 \right)_{\Sigma_k} \quad 
\end{align*}
\end{theorem}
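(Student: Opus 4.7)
The plan is to build the spectral sequence by applying the vertical-filtration spectral sequence to the $(q+1)$-diagonal double complex associated to a $(q+1)$-good cover $\mathcal{U}$ of $M$, whose existence is recalled in Section~\ref{SectionNontrivialGaugeAlgs}. By Corollary~\ref{CorollarySpectralSequenceConvergesToDiagonal}, the total complex of this double complex computes $\Delta_{q+1} H_\bullet^\born(\Gamma_c(\Ad P))$, and both associated spectral sequences converge to this limit. I will compute the one obtained by taking vertical (Chevalley--Eilenberg) homology first and horizontal (\v{C}ech) differentials second.

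For the $E^1$ page, fix a multi-index $I$ and let $U_I := \bigcap_{i \in I} U_i$. Since $\mathcal{U}$ is $(q+1)$-good, $U_I$ is diffeomorphic to a disjoint union $\bigsqcup_{j=1}^m \mathbb{R}^n$ of at most $q+1$ copies of Euclidean space, and $\Ad P$ trivializes on each component, so
\begin{align*}
\Gamma_c(\Ad P|_{U_I}) \cong \gl_t\!\left(\textstyle\bigoplus_{j=1}^m C^\infty_c(\mathbb{R}^n)\right).
\end{align*}
The algebra $A_I := \bigoplus_j C^\infty_c(\mathbb{R}^n)$ is bornologically $H$-unital and has closed-range cyclic differential, by Corollary~\ref{CorollaryBarComplexCompactAcyclic}, Theorem~\ref{TheoremContinuousCyclicHomologyOfCinftyD} and Remark~\ref{RemarkCyclicHomologyOfDirectSum}. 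The choice $q = \lfloor (t-1)/2 \rfloor$ is precisely so that the stable range $2s + 1 \leq t$ of Theorem~\ref{TheoremNonUnitalLQT} covers $1 \leq s \leq q$. On this range the non-unital LQT theorem applies, yielding
\begin{align*}
H_s^\born\bigl(\gl_t(A_I)\bigr) \cong \bigl(\hat\Lambda^\bullet H_{\bullet-1}^{\lambda,\born}(A_I)\bigr)_s.
\end{align*}
Moreover, a $k$-fold wedge contribution in this decomposition satisfies $k \leq s \leq q < q+1$, so the $(q+1)$-diagonal truncation does not cut it off; consequently $\Delta_{q+1} H_s^\born(\Gamma_c(\Ad P|_{U_I}))$ agrees with the full bornological homology in the stated range.

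To pass to $E^2$, I will interpret the resulting horizontal complex under the LQT identification. A $k$-fold wedge factor with cyclic degrees $s_1-1, \dots, s_k-1$ is naturally localized on $k$ distinct spatial sites of $M$, and the \v{C}ech differential for $\mathcal{U}$ restricted to the $(q+1)$-diagonal translates, after antisymmetrization and use of Lemma~\ref{LemmaExponentiationOfQGoodCovers}, into the \v{C}ech differential for the product cover $\mathcal{U}^k$ on $M^k$, acting on the product cosheaf $Z^{\xi_n(s_1-1)} \hotimes \dots \hotimes Z^{\xi_n(s_k-1)}$ from Subsection~\ref{subsec:SmoothFormsAndKernelCosheaf}. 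Invoking~\eqref{eq:CyclicHomologyInXiNotation} to identify each $H_{s_i-1}^{\lambda,\born}(C^\infty_c(\mathbb{R}^n))$ with $Z^{\xi_n(s_i-1)}(\mathbb{R}^n)$, passing to $\Sigma_k$-coinvariants, and summing over all $k \geq 1$ and compositions $s_1 + \dots + s_k = s$ should then produce the claimed form of $E^2_{r,s}$.

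The main obstacle will be verifying that under the LQT quasi-isomorphism the horizontal \v{C}ech differentials on $\mathcal{U}$ translate cleanly into the $\mathcal{U}^k$-\v{C}ech differentials on the product cosheaves, rather than introducing cross-terms that mix different wedge degrees $k$. This requires tracking the localisation of each wedge factor to a single copy of $M$ and checking that the $(q+1)$-diagonal truncation respects this factorisation across all multi-intersections $U_I$, in a manner compatible with the $\Sigma_k$-action on wedge factors. A secondary subtlety is confirming that the exterior tensor product cosheaf on $M^k$ so obtained truly restricts to the section-space tensor product on the base of product open sets; this should follow from Proposition~\ref{prop:PropertiesOfTopTensorProds}(ii) together with the compact-support isomorphism analogous to~\eqref{eq:ExteriorTensorProductOfSections}, but must be articulated carefully.
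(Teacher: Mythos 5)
Your overall strategy coincides with the paper's: filter the $(q+1)$-diagonal double complex of a $(q+1)$-good cover by columns, use Corollary~\ref{CorollarySpectralSequenceConvergesToDiagonal} for convergence, compute $E^1$ on multi-intersections via the (non-unital) LQT theorem in the stable range $2s+1\leq t$ guaranteed by $q=\lfloor (t-1)/2\rfloor$, and then recognize the rows of $E^1$ as skew-symmetrized \v{C}ech complexes of the product cosheaves $Z^{\xi_n(s_1-1)}\hotimes\dots\hotimes Z^{\xi_n(s_k-1)}$ over the covers $\mathcal{U}^k$. Your observations that the diagonal truncation does not interfere because $k\leq s\leq q<q+1$, and that the exterior-product identification needs the analogue of~\eqref{eq:ExteriorTensorProductOfSections}, are both sound.

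However, there is a genuine gap at the step you treat as routine: the identification of the horizontal ($E^1$) differential with the cosheaf extension maps. The bundle $\Ad(P)$ is \emph{not} trivial, so the isomorphism $\Gamma_c(\Ad P|_{U})\cong\gl_t(C^\infty_c(U))$ (and hence the identification of $H_s^\born$ with wedge products of $Z^{\xi_n(\bullet)}(U)$) depends on a choice of local trivialization over each $U\in\mathcal{U}$. The extension maps $H_\bullet^\born(\Gamma_c(\Ad P|_U))\to H_\bullet^\born(\Gamma_c(\Ad P|_V))$ entering the \v{C}ech differential therefore differ from the naive extensions $\iota_U^V\colon Z^k(U)\to Z^k(V)$ by the action of the transition functions $g_{UV}\colon U\cap V\to \GL_t(\mathbb{R})$. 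The paper's proof resolves this by noting that on $\Ad(P)$ the transition functions act through the adjoint action, and that after the LQT reduction one is working in the $\gl_t(\mathbb{K})$-tensor-invariant part, which coincides with the $\GL_t(\mathbb{K})$-invariants (\cite[Lemma 9.2.5]{loday1998cyclic}); hence the transition functions act trivially there and the $E^1$ differential really is the product-cosheaf \v{C}ech differential. Without this argument your $E^2$ identification does not go through, and indeed it would fail for a general $\gl_t$-bundle whose transition functions act by outer automorphisms — this is precisely why the theorem is stated for adjoint bundles. By contrast, the obstacle you single out (cross-terms mixing wedge degrees $k$) is comparatively harmless, since the LQT identification is already graded by wedge degree and the extension maps respect that grading. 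A secondary, more minor point: Theorem~\ref{TheoremNonUnitalLQT} is stated for nuclear Fréchet algebras, so you cannot apply it directly to the LF-algebra $A_I=\bigoplus_j C^\infty_c(\mathbb{R}^n)$; you should pass through the Fréchet subalgebras of fixed compact support and take direct limits, as in Remark~\ref{RemarkCyclicHomologyOfDirectSum} and Corollary~\ref{cor:LQTForSmoothFunctions}.
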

\begin{remark}
Assuming Conjecture~\ref{con:CechHomologyOfProduct}, we find that, graded by its diagonals, the second page~$\{E^2_{r,s}\}_{r,s \geq 0}$ looks exactly like the compactly supported analogue of~$H^\bullet(\gl(C^\infty(M)) = \hat{\Lambda}^\bullet H^{\lambda, \born}_{\bullet - 1}(C^\infty(M))$ in total degree~$\leq  q$, all instances of~$C^\infty(M)$ and~$\Omega^\bullet(M)$ replaced by~$C^\infty_c(M)$ and $\Omega^\bullet_c(M)$, respectively. 
Thus, while it is not instantly recognizable, the second page of this spectral sequence is a quite reasonable double grading of what one would expect bornological Lie algebra homology of~$\gl_n(C^\infty_c(M))$ to be, intertwining the homological grading with a certain grading that encodes the ``locality'' of the data.
\end{remark}
\begin{proof}[Proof of Theorem~\ref{TheoremSpectralSequenceForGammaK}]
We consider the~$(q+1)$-th diagonal double complex associated to the cover~$\mathcal{U}$, and consider the spectral sequence arising by filtering along the columns. We know that this spectral sequence converges to the~$(q+1)$-th diagonal homology by Corollary~\ref{CorollarySpectralSequenceConvergesToDiagonal}. It remains to describe the second page.
The differential on the zeroeth page is simply given by taking the homology in vertical direction.
 If~$s \leq q$ and~$U \in \mathcal{U}$, then
\begin{align*}
H_s( \Delta_{q+1} C_\bullet(\Ad(P),U) ) \cong H_s^\born(\gl_t(C^\infty_c(U)))  \cong H_s^\born(\gl(C^\infty_c(U))).
\end{align*}
If further~$U \cong \bigsqcup_{i=1}^r \mathbb{R}^n$, then
\begin{align*}
\Gamma_c(\mathcal{K} \att_U) \cong \bigoplus_{i=1}^r \gl(C^\infty_c(\mathbb{R}^n)).
\end{align*} 
With Remark~\ref{RemarkCyclicHomologyOfDirectSum} and Equation \eqref{eq:CyclicHomologyInXiNotation} we get
\begin{align*}
H_\bullet^\born(\gl(C^\infty_c(U))) 
\cong
\hat{\Lambda}^\bullet 
\left( \bigoplus_{i=1}^r 
H_{\bullet-1}^{\lambda,\born}(C^\infty_c(\mathbb{R}^n)) \right)
\cong
\hat{\Lambda}^\bullet \left(Z^{\xi_n(\bullet - 1 )}(U) \right).
\end{align*}
%
%
%
%
In degrees, this translates to:
\begin{align}
\label{eq:IdentificationOfCohomologyWithForms}
H_s^\born(\Gamma_c(\mathcal{K} \att_U) ) 
\cong
\bigoplus_{k \geq 1} 
\left( \bigoplus_{s_1 + \dots + s_k = s}  
Z^{\xi_n(s_1 - 1)}(U) 
\hotimes \dots \hotimes 
Z^{\xi_n(s_k - 1)}(U) \right)_{\Sigma_{k}}.
\end{align}
This calculates the first page of the spectral sequence.
Now, recall that the horizontal differential of the double complex arises from a \v{C}ech differential. To understand how this differential acts on the first page, it suffices to understand, for open sets~$U \subset V \subset \mathbb{R}^n$ with~$U \cong V \cong \mathbb{R}^n$, the composition
\begin{equation}
\label{MapExtensionOfHomologyMaps}
\begin{aligned}
H_k^\born( \gl_t(C^\infty_c(U))) &\cong
H_k^\born(\Gamma_c(\mathcal{K} \att_U ) )
\\
&\to
H_k^\born(\Gamma_c(\mathcal{K} \att_V ) )
\cong 
H_k^\born( \gl_t(C^\infty_c(V))), 
\end{aligned}
\end{equation}
where the middle map is induced by the extension map
\begin{align*}
C_k^\born(\Gamma_c(\mathcal{K}  \att_U )) \to C_k^\born(\Gamma_c(\mathcal{K}  \att_V )).
\end{align*}
The differential of the first page is then given as a linear combination of such maps. 
Under the identification of~$H_k^\born( \gl(C^\infty_c(U)))$ with antisymmetrized tensor products of terms~$Z^k(U)$, the map \eqref{MapExtensionOfHomologyMaps} is induced by extensions~$\iota_U^V: Z^k(U) \to Z^k(V)$, up to an action by the transition function~$g_{UV} : U \cap V \to \GL_t(\mathbb{R})$ arising from the choice of local trivializations on~$U$ and~$V$.
Now recall that on~$\Ad(P)$, the transition functions act by the adjoint action of~$\GL_t(\mathbb{K})$. However, in the calculation of~$H_k^\iota( \gl_t(C^\infty_c(U)))$ we have seen that we may reduce to the ~$\gl_t(\mathbb{K})$ tensor invariants, which are equal to the~$\GL_t(\mathbb{K})$ tensor invariants by \cite[Lemma 9.2.5]{loday1998cyclic}. Thus the transition functions act trivially on this space.
Hence, under the isomorphism \eqref{eq:IdentificationOfCohomologyWithForms}, the map \eqref{MapExtensionOfHomologyMaps} can be identified with the appropriate direct sum of extension maps for the product cosheaves~$Z^{s_1 -1 } \hotimes \dots \hotimes Z^{s_k - 1}$. As a consequence, the rows on the first page of the spectral sequence can be identified with the skew-symmetrization of the direct sum of \v{C}ech complexes of these product cosheaves, with respect to the covers~$\mathcal{U},  \mathcal{U}^2, \dots, \mathcal{U}^q$.  This concludes the proof.
\end{proof}
Some easy consequences:
\begin{corollary}
\label{cor:StableCohomologyInSpecialCases}
Let~$P \to M$ be a principal~$GL_t(\mathbb{K})$-bundle and~$\dim M \geq 1$, and assume that Conjecture~\ref{con:CechHomologyOfProduct} holds.
\begin{itemize}
\item[i)] If~$t \geq 3$, then
\begin{align*}
H_1^\born(\Gamma_c(\Ad(P))) \cong \Omega^0_c(M).
\end{align*}
\item[ii)]
If~$t \geq 5$, then
\begin{align*}
H_2^\born(\Gamma_c(\Ad(P))) &\cong 
\left(\Omega^0_c(M)^{\hotimes^2}\right)_{\Sigma_2} 
\oplus 
\frac{\Omega^1_c(M)}{d_\dR \Omega^0_c(M)}.
\end{align*}
\item[iii)] If~$t \geq 7$, then
\begin{align*}
H_3^\born&(\Gamma_c(\Ad(P)) 
\cong \\
&
\left(\Omega^0_c(M)^{\hotimes^3}\right)_{\Sigma_3}
\oplus 
\left(\frac{\Omega^1_c(M)}{d_\dR \Omega^0_c(M)} \hotimes \Omega^0_c(M)\right)
\oplus 
\frac{\Omega^2_c(M)}{d_\dR \Omega^1_c(M)}
\oplus
H^0_c(M).
\end{align*}
\end{itemize}
\end{corollary}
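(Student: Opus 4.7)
The strategy is to apply Theorem~\ref{TheoremSpectralSequenceForGammaK} with $q := \lfloor (t-1)/2 \rfloor$, read off the relevant entries of the second page, and check collapse. In cases (i), (ii), (iii) we have $q = 1, 2, 3$ respectively, so the degree $r_0 \in \{1,2,3\}$ under consideration satisfies $r_0 \leq q$. For any $k \leq q + 1$ one has $M^k_{q+1} = M^k$, hence $\Delta_{q+1} C^\born_k(\Gamma_c(\Ad(P))) = C^\born_k(\Gamma_c(\Ad(P)))$ for all $k \leq r_0 + 1$. Since homology in degree $r_0$ only sees $C^\born_{r_0 - 1}, C^\born_{r_0}, C^\born_{r_0 + 1}$, this identification implies $\Delta_{q+1} H_{r_0}^\born = H_{r_0}^\born$, so the spectral sequence genuinely converges to the object we want to compute.

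Next I would enumerate the entries $E^2_{r, s}$ with $r + s = r_0$ and $1 \leq s \leq q$. For each $k \geq 1$ and each ordered composition $s_1 + \dots + s_k = s$ with $s_i \geq 1$, one reads off the tensor factors $Z^{\xi_n(s_i - 1)}$ from~\eqref{FormulaNotationXiNK}, applies Conjecture~\ref{con:CechHomologyOfProduct} to decompose $\check H_r(\mathcal{U}^k, Z^{\xi_n(s_1 - 1)} \hotimes \dots \hotimes Z^{\xi_n(s_k - 1)})$ as a graded tensor product of the Čech groups over $M$, and evaluates each factor via Corollary~\ref{cor:CechHomologyOfZk}: degree zero contributes $Z^{k_i}(M) = \Omega^{k_i}_c(M)/d_\dR \Omega^{k_i - 1}_c(M)$, while higher degree gives $H^{k_i - r_i}_{\dR, c}(M)$, which vanishes as soon as $k_i < r_i$. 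Taking $\Sigma_k$-coinvariants with Koszul signs assembles the answer. For case (iii) this yields contributions from $k = 1, 2, 3$ at $(r, s) = (0, 3)$ producing respectively $\Omega^2_c(M)/d_\dR\Omega^1_c(M)$, $\Omega^1_c(M)/d_\dR \Omega^0_c(M) \hotimes \Omega^0_c(M)$, and $(\Omega^0_c(M)^{\hotimes 3})_{\Sigma_3}$, together with the surviving summand $\check H_1(M, Z^1) = H^0_{\dR, c}(M)$ at $(r, s) = (1, 2)$; cases (i) and (ii) proceed analogously but more briefly.

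Finally, I would verify that the computed $E^2$ entries in total degree $r_0$ survive to $E^\infty$. Outgoing differentials from the zeroth column land in negative column indices and hence vanish. For incoming differentials $d_j \colon E^j_{p, q} \to E^j_{p - j, q + j - 1}$ arriving at total degree $r_0$, one must check that all candidate source entries with $p + q = r_0 + 1$ are zero; by Corollary~\ref{cor:CechHomologyOfZk} these all involve some factor $\check H_r(M, Z^k)$ with $k < r$ (for instance, in case (ii), $E^2_{2,1} = \check H_2(M, Z^0) = H^{-2}_{\dR, c}(M) = 0$, and analogous checks in cases (i) and (iii)) or involve $s = 0$, which is excluded by the reduction convention on the double complex, so they vanish.

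The genuinely unresolved obstacle is the use of Conjecture~\ref{con:CechHomologyOfProduct} to evaluate the Čech homology of the product cosheaves $Z^{k_1} \hotimes \dots \hotimes Z^{k_l}$; without it, only the unconditional $k = 1$ summands survive. A secondary bookkeeping issue is the consistent treatment of Koszul signs in the $\Sigma_k$-coinvariants for mixed compositions, notably the $k = 2$ contribution $Z^0 \hotimes Z^1$ in case (iii), where one must verify that after antisymmetrization the two orderings $(1,2)$ and $(2,1)$ collapse to the single summand $\Omega^1_c(M)/d_\dR \Omega^0_c(M) \hotimes \Omega^0_c(M)$ displayed in the statement.
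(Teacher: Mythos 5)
The paper states this as ``some easy consequences'' with no proof, so there is no given argument to compare against; your proof supplies the intended one, and it is correct. Two small points. First, your preliminary reduction $\Delta_{q+1}H^\born_{r_0}\cong H^\born_{r_0}$ for $r_0\leq q$ is a step the paper leaves implicit but which is genuinely needed for Theorem~\ref{TheoremSpectralSequenceForGammaK} to yield the unmodified homology; your justification via $M^k_{q+1}=M^k$ for $k\leq q+1$ and the three-term window $\{r_0-1,r_0,r_0+1\}$ is exactly right, and it is also why the theorem's description of $E^2_{r,s}$ for $1\leq s\leq q$ suffices (the relevant sources of incoming differentials at total degree $r_0+1$ have first index $\geq 2$, hence $s\leq r_0-1<q$, so they are covered by the formula). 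Second, in case (iii) the spectral sequence a priori produces a two-step filtration of $H^\born_3$ with graded pieces $E^\infty_{0,3}$ and $E^\infty_{1,2}$, whereas the corollary asserts a direct sum; this is automatic at the level of abstract vector spaces, and as TVS it holds whenever $E^\infty_{1,2}\cong H^0_c(M)$ is finite-dimensional (so that the quotient map admits a continuous section), but neither you nor the paper flags this — worth a sentence if you want a watertight topological statement. Otherwise the collapse check, the appeal to Conjecture~\ref{con:CechHomologyOfProduct}, the evaluation via Corollary~\ref{cor:CechHomologyOfZk}, and the identification of the $\Sigma_2$-coinvariants for the mixed composition $(1,2)\sim(2,1)$ are all as intended.
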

\section{Comments and further outlook on the spectral sequence}
We want to dedicate this section to a large amount of comments that can be made about Theorem~\ref{TheoremSpectralSequenceForGammaK}, its assumptions and possible outlooks for future generalizations.
\\
Firstly, the only reason we needed to restrict to bundles~$\Ad(P) \to M$ was to make sure that the transition functions of the bundle can be chosen to act by inner automorphisms of the Lie algebra~$\gl_t(\mathbb{K})$. In a more general Lie algebra bundle, \emph{outer automorphisms} may show up, i.e. automorphisms which are not in the image of~$\Ad : GL_t(\mathbb{K}) \to \text{Aut}(\gl_t(\mathbb{K}))$. 
In this case, one may need to twist the cosheaf structure of the~$Z^k$ by a locally constant system depending on the bundle. Hence, by including this data, it should be a straightforward exercise to generalize Theorem~\ref{TheoremSpectralSequenceForGammaK} to more general~$\gl_t(\mathbb{K})$-bundles.
\\
Secondly, in contrast to Theorems~\ref{thm:NuclearFrechetAlgebrasFulfilLQT} and~\ref{TheoremNonUnitalLQT}, one cannot phrase Theorem~\ref{TheoremSpectralSequenceForGammaK} in terms of Lie algebra bundles with in\-fi\-nite\--di\-men\-sion\-al fibre~$\gl(\mathbb{K})$, since the local sections in this situation use the wrong tensor product: \cite[Theorem 44.1]{treves1967topological} implies
\begin{align*}
C^\infty_c(U, \gl(\mathbb{K})) \cong \overline{C^\infty_c(U) \otimes_\pi \gl(\mathbb{K})},
\end{align*}
which is not necessarily equal to~$C^\infty_c(U) \hotimes \gl(\mathbb{K}) = \gl(C^\infty_c(U))$, as in general~$\otimes_\beta \neq \otimes_\pi$ on LF-spaces.
\\
Thirdly, the reader may be tempted to consider the non-compactly supported analogue of Theorem~\ref{TheoremSpectralSequenceForGammaK}. Constructing this would, in some ways, be more standard, as this would rather use the \emph{sheaf} theory associated to the sheaf of sections of~$\Ad(P) \to M$, rather than the cosheaf theory of its compactly supported counterpart. Another advantage would be that the stabilization 
\begin{align*}
H_k^\born(\gl(C^\infty(\mathbb{R}^n))) \cong H_k^\born(\gl_n(C^\infty(\mathbb{R}^n)))
\end{align*}
 occurs already when~$k + 1 \leq n$, which is strictly stronger than the stabilization for~$C^\infty_c(\mathbb{R}^n)$ in Theorem~\ref{TheoremNonUnitalLQT}, occurring only when~$2k + 1 \leq n$.
 However, the crucial difference is that the arising double complex would not be of first-quadrant anymore, as it arises as a mixture of a cohomological \v{C}ech complex with a homological Lie algebra complex. Such double complexes are in many ways more inconvenient: For one, it is not immediately clear whether the spectral sequences associated to the horizontal and vertical filtration converge to the same infinity-page, and even if they do, the relevant diagonals of the spectral sequence may hit infinitely many nonzero terms. 
\\
In particular, knowing only a part of the second page~$E_2^{p,q}$, as it is the case in Theorem~\ref{TheoremSpectralSequenceForGammaK}, is then a lot less helpful, as it does not significantly restrict the size of any total homology group. The relevant diagonals of the double complex cross an unbounded, unidentified part of the spectral sequence. For further material on similar calculations with non-first-quadrant complexes, we direct the reader to \cite{bott1977cohomology}.
Furthermore, one may be interested to construct explicit cochains that generate the terms of the spectral sequence. To this end, it may be helpful to compare this to \cite[Section 3]{gelfand1972cohomology}; there, continuous cochains for~$\gl_n(\mathbb{K})$-Lie algebra bundles are constructed from elements in~$\left( \tfrac{\Omega^\bullet(M)}{ d \Omega^{\bullet -1}(M)} \right)^*$. When the bundle equals the endomorphism bundle~$\End(E)$ of some vector bundle~$E \to M$, these cochains equal the antisymmetrizations of the cyclic cochains constructed in \cite[Section 7]{quillen1988algebra}.
 
 Finally, we want to remark what would be necessary to remove the condition~$s \leq q$ in the description of~$E_{r,s}^2$ in Theorem~\ref{TheoremSpectralSequenceForGammaK}. There are essentially two problems
The first one is the lack of a full description of~$q$-diagonal, bornological Lie algebra homology of~$\gl(C^\infty_c(\mathbb{R}^n))$. We know that the~$q$-diagonal complex equals the usual one in degree~$r \leq q$, hence their homologies agree in degree~$r \leq q-1$. We have not yet explored higher degrees. 
Adapting our methods to the~$q$-diagonal case would force one to consider corresponding notions of~$q$-diagonal cyclic homology of~$C^\infty_c(\mathbb{R}^n)$, which could likely be calculated using the localization techniques for cyclic homology, see for example \cite{tele1998micro}. One would also require Künneth theorems for ``diagonal'' tensor products, which we have not defined here. Intuitively, a~$1$-diagonal tensor product would be of the following shape:
\begin{align*}
A \otimes_\Delta B := A \hotimes B / \, \overline{\langle a \otimes b : \supp a \cap \supp b = \emptyset \rangle}.
\end{align*}
It is not yet clear to us what such a Künneth theorem would look like.
The second problem is the lack of a full description of the unstable homology groups of~$\gl_n(C^\infty_c(\mathbb{R}^n))$. Already in the algebraic setting, this appears to be highly nontrivial:  Conjecture 10.3.9 in \cite{loday1998cyclic} attempts to give a description for Lie algebra homology of~$\gl_n(A)$ when~$A$ is commutative and unital, and it is stated that their conjecture implies a certain case of the Macdonald conjectures. In \cite{tele2003hodge}, the conjecture is verified for many special cases, but it is also shown that, in full generality, it does not hold. To our knowledge, a satisfying, general description of these unstable homology groups is an open problem.
\section{Proof of Theorem~\ref{TheoremNonUnitalLQT}}
\label{sec:ProofOfNonUnitalLQT}
The strategy and proof of Theorem~\ref{TheoremNonUnitalLQT} takes some preparation. The material and notation of this section originates from \cite{hanlon1988complete}, and we also refer to the related preprint \cite{cortinas2005cyclic}. There, the algebraic analogue of Theorem~\ref{TheoremNonUnitalLQT} was proven, and our contribution will be the extension of the results to nuclear Fréchet algebras. We closely follow the outline of the proof of  \cite{cortinas2005cyclic} and make remarks to how this extends to our setting at the appropriate places.
\begin{definition}
Let~$m,n \geq 1$.
\begin{itemize}
\item[i)]
 We say that~$\alpha := (\alpha_1,\dots,\alpha_l) \in \mathbb{Z}_{> 0}^l$ is a \emph{partition of~$m$ of length~$l = l(\alpha)$} if~$\sum_{i=1}^l \alpha_i = m$ and~$\alpha_1 \geq \dots \geq \alpha_l$. \\
Additionally, we define~$\emptyset$ to be a partition of 0 of length 0. \\
The set of partitions of~$m$ is denoted by~$P(m)$.
\item[ii)]
Let~$\alpha, \beta$ be two partitions of~$m$, and~$l(\alpha) + l(\beta) \leq n$, then we set
\begin{align*}
[\alpha, \beta]_n := (\alpha_1,\dots,\alpha_{l_1},0,\dots,0,-\beta_{l_2},\dots,-\beta_1) \in \mathbb{Z}^n.
\end{align*}
\item[iii)]
Let~$V$ be a~$\gl_n(\mathbb{K})$-representation and~$\mu \in \mathbb{Z}^n$, then we define the \emph{highest weight module}~$V_\mu$ via
\begin{align*}
M_\mu(V) &:= \{v \in V : e_{ij} \cdot v = 0, \, e_{kk} \cdot v = \mu_k \cdot v \quad \forall k, \, \forall i < j \}, \\
V_\mu &:= U(\gl_n(\mathbb{K})) \cdot M_\mu(V) .
\end{align*}
Here,~$e_{ij} \in \gl_n(\mathbb{K})$ denotes the elementary matrix with a one in the~$(i,j)$-th entry and zeroes everywhere else.
\end{itemize}
\end{definition}
\begin{lemma}
\label{LemmaDecompositionNonUnitalChains}
Let~$n, k \geq 1$, and~$A$ be any topological algebra.  Then:
\begin{align*}
C_k^\born(\gl_n(A))
= 
\bigoplus_{m \geq 0} 
\bigoplus_{ \substack{\alpha, \beta \in P(m) \\ l(\alpha) + l(\beta) \leq n }} 
C_k^\born(\gl_n(A))_{[\alpha,\beta]_n}.
\end{align*}
\end{lemma}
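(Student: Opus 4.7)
The plan is to identify this decomposition as the isotypic/highest-weight decomposition of $C_k^\born(\gl_n(A))$ under the diagonal adjoint action of $\gl_n(\mathbb{K})$, and to observe that only finitely many isotypic components appear, so no topological subtleties arise.

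First, by Proposition~\ref{prop:PropertiesOfTopTensorProds} together with the finite-dimensionality of $\gl_n(\mathbb{K})$, I would identify
\[
C_k^\born(\gl_n(A)) \cong \bigl( \gl_n(\mathbb{K})^{\otimes k} \otimes A^{\hotimes k} \bigr)_{\Sigma_k},
\]
with $\gl_n(\mathbb{K})$ acting diagonally by the adjoint action on the first factor and trivially on $A^{\hotimes k}$; this action commutes with the simultaneous permutation action of $\Sigma_k$ on both tensor factors. I would then invoke the classical representation theory of $GL_n(\mathbb{K})$ for mixed tensor powers: writing $\gl_n(\mathbb{K}) \cong \mathbb{K}^n \otimes (\mathbb{K}^n)^*$, the finite-dimensional $GL_n(\mathbb{K})$-module $\gl_n(\mathbb{K})^{\otimes k}$ decomposes into isotypic components indexed by dominant weights $\mu$ satisfying $\sum_i \mu_i = 0$, and every such $\mu$ has a unique presentation $\mu = [\alpha,\beta]_n$ for partitions $\alpha, \beta$ of some $m \leq k$ with $l(\alpha) + l(\beta) \leq n$. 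In the notation of the lemma's preamble, this yields the finite decomposition
\[
\gl_n(\mathbb{K})^{\otimes k} = \bigoplus_{m \geq 0} \bigoplus_{\substack{\alpha,\beta \in P(m) \\ l(\alpha)+l(\beta) \leq n}} \bigl(\gl_n(\mathbb{K})^{\otimes k}\bigr)_{[\alpha,\beta]_n}.
\]

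Next, I would tensor both sides on the right with $A^{\hotimes k}$ and pass to $\Sigma_k$-coinvariants. The first operation preserves the highest-weight labelling because $\gl_n(\mathbb{K})$ acts trivially on $A^{\hotimes k}$, so each summand becomes a highest-weight module with the same highest weight. The second respects the decomposition because $\Sigma_k$ commutes with $\gl_n(\mathbb{K})$, so coinvariants split accordingly. Since the direct sum is finite, no completion issue arises and the decomposition descends verbatim to $C_k^\born(\gl_n(A))$ as claimed.

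The main obstacle, such as it is, is the purely representation-theoretic step of verifying that every irreducible constituent of $\gl_n(\mathbb{K})^{\otimes k}$ has highest weight of the form $[\alpha,\beta]_n$ with $l(\alpha)+l(\beta) \leq n$, so that the labels in the lemma exhaust the decomposition. This is standard via Schur-Weyl duality for mixed tensors on $(\mathbb{K}^n)^{\otimes k} \otimes ((\mathbb{K}^n)^*)^{\otimes k}$, or via a direct computation of highest weight vectors, but it is the only piece of substantive content in the statement; the topological aspects contribute nothing because the ambient decomposition is already a finite algebraic direct sum.
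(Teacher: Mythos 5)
Your proposal follows essentially the same route as the paper: identify $C_k^\born(\gl_n(A))$ with $\bigl(\gl_n(\mathbb{K})^{\otimes k}\otimes A^{\hotimes k}\bigr)_{\Sigma_k}$, decompose the finite-dimensional module $\gl_n(\mathbb{K})^{\otimes k}$ into highest-weight components indexed by pairs of partitions with $l(\alpha)+l(\beta)\le n$ (the paper cites Hanlon for this standard fact, you justify it via mixed Schur--Weyl duality), and then observe that the decomposition passes through tensoring with the trivially-acted-on factor, $\Sigma_k$-coinvariants, and completion since only finitely many components occur. This is correct and matches the paper's argument.
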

\begin{proof}
Consider~$\gl_n(\mathbb{K})$ and its tensor products~$\gl_n(\mathbb{K})^{\otimes^k}$ as a~$\gl_n(\mathbb{K})$-representation in the natural way, via the adjoint action and tensor products thereof. Since~$\gl_n(\mathbb{K})$ is a reductive Lie algebra, its adjoint representation is completely reducible, and the decomposition of the following finite-dimensional tensor modules is standard (for a detailled discussion, see \cite[p.211f.]{hanlon1988complete})
\begin{align*}
\gl_n(\mathbb{K})^{\otimes k} 
= 
\bigoplus_{m \geq 0} 
\bigoplus_{ \substack{\alpha, \beta \in P(m) \\ l(\alpha) + l(\beta) \leq n }} 
\left( \gl_n(\mathbb{K})^{\otimes k} \right)_{[\alpha,\beta]_n}.
\end{align*}
Now, since~$\gl_n(\mathbb{K})$ acts trivially on~$A$, this extends to a decomposition for the~$\gl_n(\mathbb{K})$-module~$\left( \gl_n(\mathbb{K})^{\otimes k} \otimes A^{\otimes^k} \right)_{\Sigma_k}$.
Lastly, since completion commutes with topological direct sums, this extends to~$C_k^\born(\gl_n(A))$ and the statement is shown.
\end{proof}
The action of a Lie algebra respects associated Chevalley-Eilenberg differentials, so we have the subcomplexes 
\begin{gather*}
C_\bullet^\born(\gl_n(A))_{[\alpha,\beta]_n} \subset C_\bullet^\born(\gl_n(A)), \\
 M_{[\alpha,\beta]_n} (C_\bullet^\born(\gl_n(A))) \subset M_{[\alpha,\beta]_n} (C_\bullet^\born(\gl_n(A))).
\end{gather*}
By reducing to the finite-dimensional highest weight theory as in Lemma~\ref{LemmaDecompositionNonUnitalChains}, one shows:
\begin{lemma}
\label{LemmaCalculationOfHighestWeightModules}
Let~$A$ be any topological algebra. For every~$n \geq 1, m \geq 0$ and~$\alpha, \beta \in P(m)$ with~$l(\alpha) +l (\beta) \leq n$, we have
\begin{align*}
H_\bullet(C^\born_\bullet(\gl_n(A))_{[\alpha,\beta]_n} ) \cong
U(\gl_n(\mathbb{K})) \cdot H_\bullet(M_{[\alpha,\beta]_n} (C^\born_\bullet(\gl_n(A)))).
\end{align*}
\end{lemma}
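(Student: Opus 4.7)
The plan is to reduce the statement to a standard decomposition fact for $\gl_n(\mathbb{K})$-equivariant chain complexes, exploiting the complete reducibility of the finite-dimensional tensor module $\gl_n(\mathbb{K})^{\otimes k}$ together with the trivial $\gl_n$-action on $A$.

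First I would verify that the Chevalley-Eilenberg differential on $C^\born_\bullet(\gl_n(A))$ is $\gl_n(\mathbb{K})$-equivariant: since $\gl_n(\mathbb{K})$ acts on $\gl_n(A) = \gl_n(\mathbb{K}) \otimes A$ only on the first factor by the adjoint action (and trivially on $A$), and the bracket on $\gl_n(A)$ is built from the $\gl_n$-equivariant bracket on $\gl_n(\mathbb{K})$ and the $\gl_n$-invariant multiplication on $A$, equivariance is automatic. In particular, both $C^\born_\bullet(\gl_n(A))_{[\alpha,\beta]_n}$ and $M_{[\alpha,\beta]_n}(C^\born_\bullet(\gl_n(A)))$ are $\gl_n$-stable subcomplexes.

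Next I would establish an isomorphism of chain complexes
\begin{align*}
C^\born_\bullet(\gl_n(A))_{[\alpha,\beta]_n}
\cong
L_{[\alpha,\beta]_n} \otimes_{\mathbb{K}} M_{[\alpha,\beta]_n}(C^\born_\bullet(\gl_n(A))),
\end{align*}
where $L_{[\alpha,\beta]_n}$ denotes the finite-dimensional irreducible of highest weight $[\alpha,\beta]_n$ and $\gl_n$ acts only on the first factor. Since $\gl_n(\mathbb{K})^{\otimes k}$ is finite-dimensional, this reduces to the purely algebraic decomposition $(\gl_n(\mathbb{K})^{\otimes k})_{[\alpha,\beta]_n} \cong L_{[\alpha,\beta]_n} \otimes_{\mathbb{K}} M_{[\alpha,\beta]_n}(\gl_n(\mathbb{K})^{\otimes k})$, tensored with $A^{\hotimes k}$ and pushed through $\Sigma_k$-coinvariants. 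Schur's lemma applied degreewise then forces $d$ to take the form $\id_{L_{[\alpha,\beta]_n}} \otimes d'$ for some differential $d'$ on the multiplicity subcomplex, and passage to homology yields
\begin{align*}
H_\bullet(C^\born_\bullet(\gl_n(A))_{[\alpha,\beta]_n})
\cong
L_{[\alpha,\beta]_n} \otimes_{\mathbb{K}} H_\bullet(M_{[\alpha,\beta]_n}(C^\born_\bullet(\gl_n(A)))),
\end{align*}
which equals $U(\gl_n(\mathbb{K})) \cdot H_\bullet(M_{[\alpha,\beta]_n}(C^\born_\bullet(\gl_n(A))))$ since $L_{[\alpha,\beta]_n}$ is generated by a highest weight vector under $U(\gl_n(\mathbb{K}))$.

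The main point requiring care will be the topological aspect of the second step, namely checking that the structural decomposition of $(\gl_n(\mathbb{K})^{\otimes k})_{[\alpha,\beta]_n}$ survives the bornological completion $\hotimes A^{\hotimes k}$ and the passage to $\Sigma_k$-coinvariants. This is not a serious obstacle, however, since the isotypic decomposition of the finite-dimensional module $\gl_n(\mathbb{K})^{\otimes k}$ is a finite direct sum of finite-dimensional subspaces; both $\hotimes$ and $\Sigma_k$-coinvariants commute with such finite direct sums, so the decomposition and the resulting generation by highest weight vectors pass through to the bornological setting without difficulty.
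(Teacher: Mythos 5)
Your proof is correct and fills in exactly the details the paper leaves implicit when it says ``by reducing to the finite-dimensional highest weight theory as in Lemma~\ref{LemmaDecompositionNonUnitalChains}, one shows.'' The tensor factorization $(C^\born_\bullet)_{[\alpha,\beta]_n} \cong L_{[\alpha,\beta]_n} \otimes M_{[\alpha,\beta]_n}(C^\born_\bullet)$, the Schur-lemma argument that the equivariant differential must split as $\id \otimes d'$, and the observation that all of this lives on a finite direct sum coming from the finite-dimensional module $\gl_n(\mathbb{K})^{\otimes k}$ (so it survives $\hotimes\,A^{\hotimes k}$ and $\Sigma_k$-coinvariants) are exactly the intended route.
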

Some last definitions before we get to the relevant theorems: Given a partition~$\alpha \in P(m)$, we denote by~$V^\alpha$ the well-known \emph{Specht~$\Sigma_m$-module} associated to~$\alpha$, see \cite[p.216]{hanlon1988complete} for a detailled definition.
Lastly, given a chain complex~$C_\bullet$, denote by~$T^m(C_\bullet) := (C_\bullet)^{\otimes^m}$ the chain complex given by the~$m$-th algebraic tensor power of~$C_\bullet$.
The following will be our main proposition:
\begin{proposition}
\label{PropositionChainIsomorphismForNonunitalLQT}
Assume~$A$ is a nuclear Fréchet algebra. 
Then, for every~$n,m \geq 1$ and~$\alpha, \beta \in P(m)$ with~$l(\alpha) + l(\beta) \leq n$, there is a chain morphism
\begin{equation}
\label{MappingNonUnitalLQTIsomorphism1}
\begin{aligned}
&M_{[\alpha,\beta]_n} (C_\bullet^\born(\gl_n(A)))
\to
\\
&\quad \quad \quad \hat{\Lambda}^\bullet(C^{\lambda,\born}_{\bullet - 1}(A)) 
\otimes 
 \left(\hat{\tilde{T}}^m (C^{\barr, \born}_\bullet(A)) \otimes V^\alpha \otimes V^\beta \right)_{\Sigma_m},
 \end{aligned}
\end{equation}
which is an isomorphism of TVS in degree~$\leq \tfrac{n}{2}$. 
Here,~$\hat{T}$,~$\hat{\tilde{T}}$ and~$\hat{\Lambda}$ denote the completions of the (reduced/exterior) tensor algebra in the bornological tensor product topology, and the codomain is equipped with the product differential arising from the cyclic and bar differentials.
\end{proposition}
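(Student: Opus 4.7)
My plan is to follow the algebraic construction from \cite[Theorem 3.1]{cortinas2005cyclic} (itself a refinement of the finite-dimensional computations of \cite{hanlon1988complete}) and check that every step survives completion in the bornological tensor product. The crucial structural observation is that $\gl_n(\mathbb{K})^{\otimes k}$ is finite-dimensional: the Schur--Weyl decomposition into $[\alpha,\beta]_n$-isotypic summands therefore only introduces finite direct sums, which by Proposition~\ref{prop:PropertiesOfTopTensorProds} commute with the bornological tensor product, with completion, and with the finite-group coinvariants $(\cdot)_{\Sigma_k}$ and $(\cdot)_{\Sigma_m}$.

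First I would recall the algebraic map $\phi^{\alg}$: after projecting onto the $[\alpha,\beta]_n$-isotypic component via Schur--Weyl, one writes an explicit representative of a highest-weight chain as a wedge product of cyclic words in $A$ of lengths read off from the parts of $\alpha$ and $\beta$, coupled to a residual bar word and to generators of the Specht modules $V^\alpha$ and $V^\beta$. This map intertwines the Chevalley--Eilenberg differential on the left with the product of the Connes differential (on the $\hat\Lambda^\bullet C_{\bullet -1}^{\lambda}$ factor) and the bar differential (on the $\hat{\tilde T}^m C_\bullet^{\barr}$ factor) on the right, and is an isomorphism in the stable range $k \leq n/2$ by the highest-weight analysis of \cite{hanlon1988complete}. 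Its building blocks are: (i) $\gl_n(\mathbb{K})$-isotypic projectors acting on the finite-dimensional factor $\gl_n(\mathbb{K})^{\otimes k}$, (ii) compositions of the cyclic permutation $\tau$, the norm operator $N$, the bar differential $b'$, and the extra degeneracy $s$ acting on $A^{\otimes_\beta k}$, and (iii) Young (anti)symmetrizers on $V^\alpha$ and $V^\beta$. Each building block is a continuous linear map on the bornological tensor product --- (i) and (iii) because the relevant spaces are finite-dimensional, (ii) by continuity of multiplication on $A$ together with Proposition~\ref{prop:PropertiesOfTopTensorProds}(iii) --- so $\phi^{\alg}$ is continuous on the algebraic chains with respect to their inherited bornological topology.

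Consequently $\phi^{\alg}$ extends uniquely by continuity to a continuous chain map between the bornological completions, which is exactly the morphism~\eqref{MappingNonUnitalLQTIsomorphism1}. To upgrade the stable-range algebraic isomorphism to a topological isomorphism of TVS, I would construct an inverse in the same algebraic manner --- the Schur--Weyl projectors and the symmetrizers are all invertible on the appropriate isotypic components --- and verify that this inverse is also continuous. Since both $\phi^{\alg}$ and its inverse are continuous and are mutually inverse on dense subspaces, their completions are mutually inverse bornological isomorphisms in degree $\leq n/2$.

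The main obstacle, and the reason nuclearity of $A$ is assumed, is justifying that the isotypic decomposition of $C_k^\born(\gl_n(A))$ used in the algebraic proof really does hold at the level of completed bornological tensor products --- i.e.\ that
\begin{align*}
C_k^\born(\gl_n(A)) \;\cong\; \bigl(\gl_n(\mathbb{K})^{\otimes k} \otimes A^{\hotimes k}\bigr)_{\Sigma_k}
\;\cong\;
\bigoplus_{[\alpha,\beta]_n} \bigl(\text{iso.\ type}\bigr) \otimes A^{\hotimes k}\bigr)_{\Sigma_k},
\end{align*}
and similarly on the codomain, where one must commute completion past $V^\alpha \otimes V^\beta$ and past $\Sigma_m$-coinvariants. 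All of this reduces to finite-dimensionality of $\gl_n(\mathbb{K})^{\otimes k}$, $V^\alpha$, $V^\beta$ together with Proposition~\ref{prop:PropertiesOfTopTensorProds}, but the bookkeeping is delicate because the degrees of the tensor powers on the two sides of~\eqref{MappingNonUnitalLQTIsomorphism1} are matched only modulo the partitions $\alpha,\beta$. Once this reduction is made precise, the homological arguments of \cite{cortinas2005cyclic} transport verbatim to the nuclear Fréchet setting and yield the desired chain isomorphism.
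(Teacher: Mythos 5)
Your proposal conflates two maps that the paper is careful to distinguish, and as a result you misplace the role of nuclearity. The explicit map that "writes a highest-weight chain as a wedge product of cyclic words coupled to a bar word and Specht generators" is the map $\psi_A$ built from the operators $\zeta_{rs}$, $\tilde\theta$, $\tilde\epsilon$. Crucially, as the paper points out explicitly, this $\psi_A$ is \emph{not} a chain morphism --- it is only a morphism of graded TVS. Lemma~\ref{LemmaAuxiliaryPsiAIsomorphism} handles the continuity and stable-range invertibility of $\psi_A$ for arbitrary complete TVS $A$ by pure linear algebra (tensoring the finite-dimensional case $A=\mathbb K$ with $\id_{T^\bullet(A)}$ and using Maschke), with no nuclearity assumption. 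Your claim that the explicitly constructed map intertwines the Chevalley--Eilenberg differential with the product of the Connes and bar differentials is precisely what $\psi_A$ fails to do.

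The actual chain map $\phi$ of \cite[Theorem 3.1]{cortinas2005cyclic} is characterized by a commuting square that relates it, via the evaluation map $\mathrm{ev}$ and the trace-pairing $\nu$, to the \emph{transpose} of $\psi_{A^\vee}\circ(K\otimes 1)$, where $A^\vee$ is a dual and $K$ is the degree operator. The paper then deduces that $\phi$ is a topological isomorphism in degree $\leq n/2$ not by exhibiting a continuous inverse directly, but by observing that every other arrow in the square becomes a topological isomorphism once algebraic duals are replaced by strong continuous duals --- and this replacement is legitimate precisely because nuclear Fréchet spaces are reflexive. That is where nuclearity enters; it is not needed for the isotypic decomposition of $C_k^\born(\gl_n(A))$, which Lemma~\ref{LemmaDecompositionNonUnitalChains} proves for any topological algebra using only finite-dimensionality of $\gl_n(\mathbb K)^{\otimes k}$. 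Your plan to check continuity of $\phi^{\alg}$ ``from its building blocks'' and then ``construct an inverse in the same algebraic manner'' does not engage with the fact that $\phi$ is only accessible through this duality detour; without a direct formula for $\phi$ (or a proof that one exists and has the continuity properties you assert), the proposal has a gap at exactly the step where the paper's argument is most delicate.
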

\begin{proof}
Assume first that~$A$ is an arbitrary TVS, not necessarily with an algebra structure. Set
\begin{align*}
S(A) := 
T^\bullet( \tilde{T}^{\bullet}(A)) \otimes (\tilde{T}^m(C_\bullet^\barr(A)) \otimes V^\alpha \otimes V^\beta).
\end{align*}
We first construct a map of graded vector spaces
\begin{align*}
\tilde{\psi}_A : 
S(A)
\to
M_{[\alpha,\beta]_n}
T^\bullet(\gl_n(A))
\end{align*}
as follows: 
Denote by~$e_{ij} \in \gl_n(\mathbb{K})$ the elementary matrix with a one in the~$(i,j)$-th entry and zeroes everywhere else. Set then for~$c = a_1 \otimes \dots \otimes a_p \in A^{\otimes^{p}}$ and~$1 \leq r,s \leq n$:
\begin{align*}
\zeta_{rs}(c) := 
\sum_{i_2,\dots,i_{p}} (e_{r i_2} \otimes a_1) \, \otimes  \, &(e_{i_2 i_3} \otimes a_2)
\otimes \dots \\ & \dots \otimes   (e_{i_{p-1} i_p} \otimes a_{p-1}) \otimes   (e_{i_{p} s} \otimes a_p) \in (\gl_n(A))^{\otimes^p}
\end{align*}
With this we define
\begin{align*}
\tilde{\theta} : T^\bullet(\tilde{T}^\bullet(A)) \to T^\bullet(\gl_n(A))
\end{align*}
as the graded algebra map induced by, for~$a_1 \otimes \dots \otimes a_p \in T^1(T^p(A))$:
\begin{align*}
\tilde{\theta} (a_1 \otimes \dots \otimes a_p) :&=
\sum_{1 \leq k \leq n}  \zeta_{kk}(a_1 \otimes \dots \otimes a_p)
\\
&=
\sum_{i_1,\dots,i_p} (e_{i_1 i_2} \otimes a_1) \otimes \dots \otimes (e_{i_n i_1} \otimes a_n).
\end{align*}
Further, recall that for some partition~$\gamma$ of a number~$m$, the Specht module~$V^\gamma$ is defined as generated by equivalence classes of standard Young tableaux of shape~$\gamma$ in a certain way \cite[p.216]{hanlon1988complete}. Two standard Young tableaux are considered equivalent if their rows contain the same numbers. 
Let~$x$ be such a Young diagram of length~$\leq n$ and~$1 \leq i \leq m$, then we set~$\rho_i(x)$ to be the row of~$x$ containing the number~$i$. We define:
\begin{align*}
\tilde{\epsilon} : \tilde{T}^m(C^\barr_\bullet(A)) \otimes V^{\alpha} \otimes V^\beta &\to T^\bullet(\gl_n(A)),\\
(c_1 \otimes \dots \otimes c_m) \otimes x \otimes y &\mapsto  \zeta_{\rho_1(x),n + 1 - \rho_1(y)} (c_1) \otimes \dots \otimes  \zeta_{\rho_m(x),n + 1 - \rho_m(y)} (c_m)
\end{align*}
Then, finally, we define 
\begin{align*}
\tilde{\psi}_A : S(A) \to M_{[\alpha,\beta]_n}
T^\bullet(\gl_n(A))
\end{align*} 
as the tensor product of~$\tilde{\theta}$ and~$\tilde{\epsilon}$. \cite[Theorem 3.4]{hanlon1988complete} shows that this indeed maps into the highest weight module~$M_{[\alpha,\beta]_n}
T^\bullet(\gl_n(A))$. 
Note also that~$\tilde{\psi}_A$ intertwines the actions
\begin{align}
\label{MappingActionsOnTheSpaceSA}
\mathbb{Z}/k_1 \mathbb{Z} \, \, \action \, \,  T^{k_1}(A), \quad 
\Sigma_{k_2} \, \, \action \, \,  T^{k_2}(\tilde{T}^{\bullet}(A)), \quad
\Sigma_{m} \, \, \action \, \,  (\tilde{T}^m(C_\bullet^\barr(A)) \otimes V^\alpha \otimes V^\beta)
\end{align}
on the domain with corresponding permutations of~$\Sigma_m$ on the codomain, and the invariants of both spaces with respect to these actions are
\begin{align*}
R^\Sigma(A) &:= \Lambda^\bullet(C^\lambda_{\bullet - 1}(A)) \otimes  \left( \tilde{T}^m (C^{\barr}_\bullet(A)) \otimes V^\alpha \otimes V^\beta \right)^{\Sigma_m},\\
R_\Sigma(A) &:= \Lambda^\bullet(C^\lambda_{\bullet - 1}(A)) \otimes  \left( \tilde{T}^m (C^{\barr}_\bullet(A)) \otimes V^\alpha \otimes V^\beta \right)_{\Sigma_m},
\end{align*}
and~$M_{[\alpha,\beta]_n} C_\bullet(\gl(A))$. Denote by~$\psi_A : R^\Sigma(A) \to C^\bullet(\gl(A))$ the arising map on invariants.
We show now:
\begin{lemma}
\label{LemmaAuxiliaryPsiAIsomorphism}
Let~$A$ be a complete TVS. Consider~$R^\Sigma(A)$ and~$M_{[\alpha,\beta]_n} (C_\bullet(\gl_n(A)))$ as TVS with the topology induced by the projective tensor product.
Then~$\psi_A$ extends to a morphism of topological graded vector spaces on the completions
\begin{align*}
\overline{R^\Sigma(A)}
 \to
M_{[\alpha,\beta]_n} (C_\bullet^\born(\gl_n(A))),
\end{align*}
which is an isomorphism in degree~$\leq \tfrac{n}{2}$. 
\end{lemma}
\begin{proof}
\cite[Theorem 3.6]{hanlon1988complete} shows that for every finite-dimensional~$A$, the map~$\psi_A$ is an isomorphism of vector spaces in degree~$\leq \tfrac{n}{2}$ .
Maschke's theorem implies that in characteristic zero and for a finite group~$G$, we can always assign to every equivariant morphism~$f : U \to V$ between~$G$-modules an equivariant map~$h : V \to U$, with the property that if~$f$ reduces to an isomorphism on invariants~$f^G: U^G \to V^G$, then~$h^G$ inverts~$f^G$. 
Hence in the case~$A = \mathbb{K}$ there is a map
\begin{align*}
\tilde{\kappa}_{\mathbb{K}} : 
T^\bullet(\gl_n(\mathbb{K}))
\to
S(\mathbb{K}),
\end{align*} 
intertwining the actions \eqref{MappingActionsOnTheSpaceSA},
whose reduction to invariants~$\kappa_\mathbb{K}: C_\bullet (\gl_n(\mathbb{K})) \to R(\mathbb{K})$ inverts~$\psi_\mathbb{K}$ in degree~$\leq \tfrac{n}{2}$. Note that the domain and codomain of~$\tilde{\kappa}_\mathbb{K}$ have finite-dimensional graded components and hence~$\tilde{\kappa}_\mathbb{K}$ and~$\kappa_\mathbb{K}$ are automatically continuous.
In the case of~$A$ being an arbitrary TVS, we can use canonical isomorphisms to identify
\begin{align*}
S(A) \cong S(\mathbb{K}) \otimes T^\bullet(A), \quad
T^\bullet(\gl_n(A)) \cong T^\bullet(\gl_n(\mathbb{K}))  \otimes T^\bullet(A).
\end{align*}
These isomorphisms identify~$\tilde{\psi}_A$ with~$\tilde{\psi}_{\mathbb{K}} \otimes \id_{T^\bullet(A)}$.
Under these identifications, consider~$\tilde{\kappa}_A := \tilde{\kappa}_{\mathbb{K}} \otimes \id_{T^\bullet(A)}$. The equivariance of~$\tilde{\kappa}_A$ and~$\tilde{\psi}_A$ under the actions \eqref{MappingActionsOnTheSpaceSA} show that the reduction to invariants~$\kappa_A: C_\bullet (\gl_n(A)) \to R(A)$ is a two-sided inverse to~$\psi_A$ in degree~$\leq \tfrac{n}{2}$, just as in the case~$A = \mathbb{K}$. This map is continuous as the tensor product of continuous maps.
Hence~$\psi_A$ has a continuous inverse, and thus lifts to a continuous morphism of graded TVS on the topological closures, and that this lift is an isomorphism in degree~$\leq \tfrac{n}{2}$. The lemma is shown.
\end{proof}
Now, assume~$A$ is a nuclear Fréchet algebra.
We use the previously constructed map~$\psi_A$, which only induced a morphism ofTVS, to induce a morphism of chain complexes. 
Denote by~$A^*$ the strongly continuous dual of~$A$ and by~$A^\vee$ the algebraic dual. Define the following maps for~$a \in A, \beta \in A^*, g, h \in \gl(\mathbb{K})$:
\begin{align*}
\text{ev} &: A \to (A^{\vee})^\vee,  \quad 
&&\text{ev}(a)(\beta) := \beta(a), \\
\nu &: \gl_n(A) \to \gl_n(A^\vee)^\vee, \quad
&&\nu(g \otimes a) (h \otimes \beta) := \tr (g^T \cdot h) \cdot \text{ev}(a)(\beta), \\
K &: C^{\lambda}_\bullet(A) \to C^{\lambda}_\bullet(A), \quad 
&&K([a_1 \otimes \dots \otimes a_n]) := n \cdot [a_1 \otimes \dots \otimes a_n].
\end{align*}
In \cite[Theorem 3.1]{cortinas2005cyclic}, a continuous graded chain map~$\phi$ is defined which makes the following diagram commute:
\[
\begin{tikzcd}
	M_{[\alpha,\beta]_n} 
	C_\bullet(\mathfrak{gl}_n(A)) 
	\arrow{d}{}
	\arrow{rrr}{\phi} 
	&&&
	R_\Sigma(A)
	\arrow{d}{}
	\\
	M_{[\alpha,\beta]_n} 
	C_\bullet(\mathfrak{gl}_n(A^\vee))^\vee 
	\arrow{rrr}{(\psi_{A^\vee} \circ (K \otimes 1))^T}
	&&&
	\left(R^\Sigma(A^\vee)\right)^\vee
\end{tikzcd}
\]
Here, the left vertical arrow is induced by~$\nu$, the right vertical arrow by~$\text{ev}$.
Since nuclear Fréchet algebras are automatically reflexive,~$\text{ev}$ and~$\nu$ are topological isomorphisms when all algebraic duals are replaced with strongly continuous duals and~$C_\bullet$ with~$C_\bullet^\born$.
We have also proven in Lemma~\ref{LemmaAuxiliaryPsiAIsomorphism} that~$\psi_{A^*}$ extends to an isomorphism of TVS in degree~$\leq \tfrac{n}{2}$ on the completion, and so does its transpose~$\psi_{A^*}^T$. Hence, if in the above diagram the algebraic duals are replaced with continuous duals and~$C_\bullet$ with~$C_\bullet^\born$, the bottom arrow, too, is a topological isomorphism. Hence all arrows except~$\phi$ in the above diagram induce topological isomorphisms in degree~$\leq \tfrac{n}{2}$ under the given replacements. Hence~$\phi$ does, too.
Hence the extension of~$\phi$ to the closures is a morphism of chain complexes and an isomorphism in degree~$\leq \tfrac{n}{2}$. This concludes the statement.
\end{proof}
\begin{proof}[Proof of Theorem~\ref{TheoremNonUnitalLQT}]
Due to the Lemmata~\ref{LemmaDecompositionNonUnitalChains} and~\ref{LemmaCalculationOfHighestWeightModules}, we have
\begin{align*}
H_\bullet^\born(\gl_n(A)) \cong 
H_\bullet &\big(C^\born_\bullet(\gl_n(A))_{[\emptyset,\emptyset]_n} \big)
\\
&\oplus
\bigoplus_{m \geq 1} 
\bigoplus_{ \substack{\alpha, \beta \in P(m) \\ l(\alpha) + l(\beta) \leq n }} 
U(\gl_n(\mathbb{K})) \cdot
H_\bullet \left( M_{[\alpha,\beta]_n}( C^\born_\bullet(\gl_n(A))) \right).
\end{align*}
If~$A$ is bornologically~$H$-unital, then its bornological bar complex is acyclic, and thus its differential has closed range. By assumption on~$A$, the differential of the cyclic complex has closed range, we can calculate the homology of the codomain of \eqref{MappingNonUnitalLQTIsomorphism1} via the Künneth isomorphism from Corollary~\ref{CorollarySchwartzKuenneth}. But acyclicity of the bar complex then implies acyclicity of this product complex. Since \eqref{MappingNonUnitalLQTIsomorphism1} is a chain isomorphism in degree~$r \leq \tfrac{n}{2}$, it induces isomorphisms of homology groups in degree~$2r  +1 \leq n$.
Thus, if~$2r + 1 \leq n$, we have 
\begin{align*}
H_r^\born(\gl_n(A)) \cong H_r \left(C^\born_\bullet(\gl_n(A))_{[\emptyset,\emptyset]_n} \right) = H_r \left(C^\born_\bullet(\gl_n(A))_{\gl_n(\mathbb{K})} \right),
\end{align*}
and the calculation of the homology of the invariant complex~$C^\born_\bullet(\gl_n(A))_{\gl_n(\mathbb{K})}$ is carried out exactly as in the unital case in Section~\ref{SectionLodayQuillenFrechet}. In particular, this homology stabilizes so that~$H_r^\iota(\gl(A)) \cong H_r^\iota(\gl_n(A)) \cong H_r^\born(\gl_n(A))$ when~$2r + 1 \leq n$.
Hence the statement is shown.
\end{proof}
\appendix
\section{The algebraic Loday-Quillen result}
\label{AppendixAlgebraicLodayQuillen}
We recall from the presentation in \cite[Chapter 10]{loday1998cyclic} the outline of the proof of the algebraic Loday-Quillen-Tsygan theorem, originally developed in \cite{loday1984cyclic} and even earlier in \cite{tsygan1983homology}.
Fix a unital algebra~$A$ in this section.
\begin{theorem}[Loday, Quillen, Tsygan]
Let~$A$ be a unital algebra and 
\begin{align*}
\gl(A) := \varinjlim \gl_n(A) := \varinjlim \gl_n(\mathbb{K}) \otimes A.
\end{align*}
 Then we have the following relation of the Lie algebra homology~$H_\bullet(\gl(A))$ and the cyclic homology~$H^\lambda_\bullet(A)$:
\begin{align*}
H_\bullet(\gl(A)) \cong \Lambda^\bullet H_{\bullet -1}^\lambda(A).
\end{align*}
All above tensor products and homologies are taken algebraically.
\end{theorem}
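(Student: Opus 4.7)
The plan is to prove the statement by reducing the Chevalley--Eilenberg complex of~$\gl_n(A)$ to a simpler complex via invariant theory for the action of~$\gl_n(\mathbb{K})$, identifying that complex with an exterior algebra on the Connes complex, and then stabilizing in~$n$. Concretely, I would first exploit that, since~$A$ is unital, the constant matrices~$\gl_n(\mathbb{K}) \hookrightarrow \gl_n(A)$ form a Lie subalgebra acting by the adjoint action on all of~$C_\bullet(\gl_n(A))$. Because~$\gl_n(\mathbb{K})$ is reductive and every graded piece~$C_k(\gl_n(A)) \cong (\gl_n(\mathbb{K})^{\otimes k} \otimes A^{\otimes k})_{\Sigma_k}$ is completely reducible as a~$\gl_n(\mathbb{K})$-module, the complex splits as the direct sum of its invariant part and a complement built from isotypic components on which~$\gl_n(\mathbb{K})$ acts nontrivially. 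Using the Cartan-style homotopy~$X \cdot c = d(X \wedge c) + X \wedge dc$ for~$X \in \gl_n(\mathbb{K})$, one shows that every element of the nontrivial complement is a boundary, so that the reduction
\begin{align*}
C_\bullet(\gl_n(A)) \longrightarrow C_\bullet(\gl_n(A))_{\gl_n(\mathbb{K})}
\end{align*}
is a quasi-isomorphism. This is the algebraic analogue of Proposition~\ref{prop:ReduceToCoinvariantsPossible}.

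Next I would invoke the first fundamental theorem of invariant theory for~$\GL_n$: for~$n \geq k$, the coinvariants~$(\gl_n(\mathbb{K})^{\otimes k})_{\gl_n(\mathbb{K})}$ stabilize and are naturally isomorphic to the group algebra~$\mathbb{K}[\Sigma_k]$, with a permutation~$\sigma$ corresponding to the trace-contraction pattern~$\tr(X_1 X_{\sigma(1)^{-1}} \cdots)$ through the cycles of~$\sigma$. This produces, for~$n \geq k$, a canonical isomorphism
\begin{align*}
C_k(\gl_n(A))_{\gl_n(\mathbb{K})} \;\cong\; (\mathbb{K}[\Sigma_k] \otimes A^{\otimes k})_{\Sigma_k},
\end{align*}
with~$\Sigma_k$ acting simultaneously by conjugation on~$\mathbb{K}[\Sigma_k]$ and by the signed permutation action on~$A^{\otimes k}$ (coming from the antisymmetrization defining~$\Lambda^k \gl_n(A)$). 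Passing to the direct limit~$n \to \infty$, which commutes with homology and with coinvariants since each graded piece stabilizes, reduces the problem to computing the homology of the complex~$\bigoplus_k (\mathbb{K}[\Sigma_k] \otimes A^{\otimes k})_{\Sigma_k}$ with the differential induced by the Chevalley--Eilenberg differential.

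The key step, and in my view the main technical obstacle, is identifying this complex with an exterior product of shifted Connes complexes. I would decompose~$(\mathbb{K}[\Sigma_k] \otimes A^{\otimes k})_{\Sigma_k}$ over conjugacy classes~$[\sigma] \subset \Sigma_k$; a permutation of cycle type~$(k_1, \ldots, k_r)$ contributes precisely the coinvariants of a tensor product of cyclic groups~$\mathbb{Z}/k_i\mathbb{Z}$ acting on factors~$A^{\otimes k_i}$, which is~$C^\lambda_{k_1-1}(A) \otimes \cdots \otimes C^\lambda_{k_r-1}(A)$ modulo reordering of equal-length cycles. Collecting these over all~$k$ and all cycle types assembles into the exterior algebra~$\Lambda^\bullet C^\lambda_{\bullet - 1}(A)$, giving a graded isomorphism
\begin{align*}
\theta \colon \Lambda^\bullet C^\lambda_{\bullet - 1}(A) \;\xrightarrow{\;\cong\;}\; \bigoplus_{k \geq 0} (\mathbb{K}[\Sigma_k] \otimes A^{\otimes k})_{\Sigma_k}.
\end{align*}
The nontrivial content is that~$\theta$ intertwines the two differentials: the Chevalley--Eilenberg differential on the right, when written in terms of permutations, splits into an "inner" piece that acts within each cycle and an "outer" piece that merges two cycles; the former matches the Hochschild/Connes boundary~$b$ on a single factor~$C^\lambda_{\bullet-1}(A)$, while the outer piece vanishes on coinvariants by a cancellation that uses the cyclic symmetry on each factor. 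This is where care is needed: setting up signs correctly and verifying the cancellation of the merging terms is the heart of the Loday--Quillen--Tsygan computation.

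Finally, once~$\theta$ is a chain isomorphism, the algebraic Künneth formula applied to the exterior algebra yields
\begin{align*}
H_\bullet(\gl(A)) \;\cong\; H_\bullet\!\left(\Lambda^\bullet C^\lambda_{\bullet-1}(A)\right) \;\cong\; \Lambda^\bullet H^\lambda_{\bullet-1}(A),
\end{align*}
where the last step uses that we are in characteristic zero and the complex decomposes as a graded-commutative free algebra, so the Künneth theorem identifies the homology of the exterior algebra with the exterior algebra on the homology. This completes the proof outline; the hard part, as noted, is verifying that~$\theta$ is a chain map, since it requires a careful combinatorial matching between the Chevalley--Eilenberg differential on coinvariants of tensors of matrices and the cyclic boundary on the Connes complex.
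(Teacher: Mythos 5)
Your proposal follows the same route as the paper's Appendix~\ref{AppendixAlgebraicLodayQuillen}: reduction to $\gl_n(\mathbb{K})$-coinvariants via complete reducibility and the Cartan homotopy, the first fundamental theorem identifying $(\gl_n(\mathbb{K})^{\otimes k})_{\gl_n(\mathbb{K})} \cong \mathbb{K}[\Sigma_k]$ via trace contractions, stabilization by direct limit, the cycle-decomposition isomorphism $\theta$ onto $\Lambda^\bullet C^\lambda_{\bullet-1}(A)$, and Künneth. The paper likewise defers the verification that $\theta$ is a chain map to Loday's book (Prop.~\ref{PropositionThetaMapLodayQuillen} is stated, not reproven), so your honest flagging of that step as the crux matches the exposition; your heuristic about the ``inner'' piece giving the Connes boundary and the ``merging'' piece cancelling on coinvariants is the right intuition for that deferred computation.
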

Due to the unitality of~$A$, for all finite~$n \in \mathbb{N}$ the Lie algebra~$\gl_n(A)$ contains the reductive subalgebra~$\gl_n(\mathbb{K})$, and thus the reduction~$C_\bullet(\gl_n(A)) \to C_\bullet(\gl(A))_{\gl_n(\mathbb{K})}$ is a quasi-isomorphism, see \cite[Prop 10.1.8]{loday1998cyclic}
\begin{proposition}
\label{PropositionInvariantTheoryOfGLn}
Denote by~$\Sigma_k$ the permutation group on~$k$ elements. \\
If~$n \geq k$, then there is an isomorphism 
\begin{align*}
\phi_n: \left(\gl_n(\mathbb{K}) ^{\bigotimes^k} \right)_{\gl_n(\mathbb{K})} \to \mathbb{K}[\Sigma_k],
\end{align*}
of~$\Sigma_k$-modules, where~$\Sigma_k$ acts on the left-hand side by permutation of tensor factors and on the right-hand side by the adjoint action.
\end{proposition}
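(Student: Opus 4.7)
The plan is to recognize this as a classical consequence of Schur--Weyl duality, plus a careful check that the $\Sigma_k$-action is correctly identified. Since $\gl_n(\mathbb{K})$ is reductive and acts on the finite-dimensional module $\gl_n(\mathbb{K})^{\otimes k}$, coinvariants and invariants coincide, so it suffices to produce a $\Sigma_k$-equivariant isomorphism
\begin{align*}
\left(\gl_n(\mathbb{K})^{\otimes k}\right)^{\gl_n(\mathbb{K})} \cong \mathbb{K}[\Sigma_k].
\end{align*}

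First I would reformulate the left-hand side. Setting $V := \mathbb{K}^n$, the trace pairing gives a $\gl_n(\mathbb{K})$-equivariant isomorphism $\gl_n(\mathbb{K}) \cong V \otimes V^* \cong \End(V)$, and taking tensor powers produces the equivariant isomorphism
\begin{align*}
\gl_n(\mathbb{K})^{\otimes k} \cong \End(V)^{\otimes k} \cong \End(V^{\otimes k}),
\end{align*}
where $\gl_n(\mathbb{K})$ acts on the right by conjugation relative to the diagonal action on $V^{\otimes k}$. Under these identifications, the invariants become $\End_{\gl_n(\mathbb{K})}(V^{\otimes k})$, i.e.\ the commutant of the $\gl_n$-action on $V^{\otimes k}$.

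Next I would invoke the First Fundamental Theorem of invariant theory for $GL_n$ (equivalently, Schur--Weyl duality): there is a canonical algebra homomorphism $\mathbb{K}[\Sigma_k] \to \End(V^{\otimes k})$ sending a permutation to its action by permuting tensor factors, and its image is exactly $\End_{\gl_n(\mathbb{K})}(V^{\otimes k})$. Moreover, for $n \geq k$ this map is injective (this is where the hypothesis enters), giving an isomorphism $\mathbb{K}[\Sigma_k] \xrightarrow{\sim} \End_{\gl_n(\mathbb{K})}(V^{\otimes k})$. Composing with the inverse of the equivariant identification from the previous paragraph defines the desired map $\phi_n$.

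Finally, I would verify the $\Sigma_k$-equivariance, which is the one place where one must be careful. Under the isomorphism $\gl_n(\mathbb{K})^{\otimes k}\cong \End(V^{\otimes k})$, permutation of the $k$ tensor factors of $\gl_n(\mathbb{K})^{\otimes k}$ corresponds to conjugation on $\End(V^{\otimes k})$ by the very same permutation acting on $V^{\otimes k}$. But the subspace $\mathbb{K}[\Sigma_k] \hookrightarrow \End(V^{\otimes k})$ that we identified with the invariants is precisely the image of $\Sigma_k$ itself, so this conjugation restricts to conjugation within $\mathbb{K}[\Sigma_k]$, i.e.\ to the adjoint action of $\Sigma_k$ on its group algebra. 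This is exactly the action appearing in the statement. The main obstacle, and the only nontrivial input, is the invariant-theoretic identification of the commutant; everything else is bookkeeping in tensor algebra and reductivity of $\gl_n(\mathbb{K})$.
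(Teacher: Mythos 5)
Your argument is correct, and it reaches the statement by a somewhat different route than the paper. The paper (following \cite[Chapter 9.2]{loday1998cyclic}) writes down the isomorphism explicitly in the direction $\left(\gl_n(\mathbb{K})^{\otimes^k}\right)_{\gl_n(\mathbb{K})} \to \mathbb{K}[\Sigma_k]$, namely $g \mapsto \sum_{\sigma} T(\sigma)(g)\,\sigma$ with $T(\sigma)$ a product of traces taken along the cycles of $\sigma$; cyclic invariance of the trace gives well-definedness on coinvariants and the equivariance, and the bijectivity for $n \geq k$ is the classical invariant-theoretic input. You instead go in the opposite direction: identify $\gl_n(\mathbb{K})^{\otimes k} \cong \End(V^{\otimes k})$, use reductivity to replace coinvariants by invariants (this step is fine: the projection $W^{\gl_n} \hookrightarrow W \twoheadrightarrow W_{\gl_n}$ is an isomorphism because the nontrivial isotypic part $W'$ satisfies $\gl_n \cdot W' = W'$, and it is $\Sigma_k$-equivariant since the two actions commute), and then invoke Schur--Weyl duality/the double commutant theorem to identify the invariants with the image of $\mathbb{K}[\Sigma_k]$, injectivity for $n \geq k$ giving the isomorphism; your equivariance check (permutation of tensor factors of $\End(V)^{\otimes k}$ equals conjugation by the permutation operator, hence the adjoint action on the embedded $\mathbb{K}[\Sigma_k]$) is accurate. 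Both arguments rest on the same first-fundamental-theorem-type fact, so the difference is mostly one of packaging: your version is more conceptual and shorter, while the paper's explicit trace formula has the side benefit of making the compatibility of the maps $\phi_n$ with the stabilization inclusions $\gl_n \to \gl_m$ (used immediately afterwards in the direct-limit argument) visible by inspection, something your abstract construction would need a small extra check for, though that lies outside the statement itself.
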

We can make this isomorphism map explicit (see \cite[Chapter 9.2]{loday1998cyclic}): Define
\begin{align*}
g := g_1 \otimes \dots \otimes g_k \mapsto \sum_{\sigma \in \Sigma_k} T(\sigma)(g) \, \sigma \quad \forall g_i \in \gl_n(\mathbb{K}),
\end{align*}
where, if~$\sigma \in \Sigma_k$ assumes a cycle decomposition
\begin{align*}
\sigma = (i_1 \; \dots \; i_k) (j_1 \; \dots \; j_r) \dots (t_1 \; \dots \; t_s),
\end{align*}
we set
\begin{align*}
T(\sigma)(g) := \tr(g_{i_1} \dots g_{i_{k}}) \tr(g_{j_{1}} \dots g_{j_r}) \dots \tr(g_{t_{1}} \dots g_{t_{s}}).
\end{align*}
By the invariance of the trace under cyclic permutations, it is straightforward to show that this map factors through to~$C_\bullet(\gl_n(A))_{\gl_n(\mathbb{K})}$. A careful analysis then shows that this yields a bijection of the spaces.
The family of isomorphisms~$\{\phi_n\}_{n \geq k}$ is compatible with the inclusions induced by~$ \gl_n(\mathbb{K}) \to \gl_m(\mathbb{K})$ for~$m \geq n \geq k$, meaning the following diagram commutes:
\begin{figure}[h!]
\[
\begin{tikzcd}
	{\left(\mathfrak{gl}_n(\mathbb{K})^{\otimes^k} \right)_{\mathfrak{gl}_n(\mathbb{K})}} 
	\arrow{dr}{\phi_n}
	&& 
	{\left(\mathfrak{gl}_m(\mathbb{K})^{\otimes^k} \right)_{\mathfrak{gl}_m(\mathbb{K})}} 
	\arrow{dl}{\phi_m}
	\\
	& {\mathbb{K}[\Sigma_k]} &
	\arrow[from=1-1, to=1-3, hook]
\end{tikzcd}
\]
\end{figure}
Hence, since homology commutes with direct limits,  we get
\begin{align*}
H_\bullet(\gl(A)) 
\cong 
\varinjlim H_\bullet(\gl_n(A)) 
\cong
\varinjlim H_\bullet \left( C_\bullet(\gl_n(A))_{\gl_n(\mathbb{K})} \right),
\end{align*}
and as in the direct limit~$n \to \infty$, every graded component of~$C_\bullet(\gl_n(A))_{\gl_n(\mathbb{K})}$ becomes constant at some point, we can identify
\begin{align*}
\varinjlim  C_\bullet(\gl_n(A))_{\gl_n(\mathbb{K})} \cong C_\bullet(\gl(A))_{\gl(\mathbb{K})} 
\cong \bigoplus_{k \geq 0} \left( \mathbb{K}[\Sigma_k]  \otimes A^{ \otimes^k} \right)_{\Sigma_k}.
\end{align*}
The~$\Sigma_k$-action on the last term is given by the tensor product of the signed permutation action on~$A^{\otimes^k}$ and the adjoint action on~$\mathbb{K}[\Sigma_k]$
The last ingredient is to relate the last cochain complex space and the differential it inherits to the cyclic complex of~$A$ and the cyclic differential.
Recall that~$(1 \; \dotsb \; k) \in \Sigma_k$ denotes the cyclic permutation of~$k$ elements. 
\begin{proposition}
\label{PropositionThetaMapLodayQuillen}
Consider the map
\begin{align*}
C_{\bullet-1}^{\lambda}(A) &\to\bigoplus_{k \geq 0}  \left(  \mathbb{K}[\Sigma_{k}] \otimes A^{\otimes^k} \right)_{\Sigma_{k}},  \\
[a_1 \otimes \dotsb \otimes a_k]
&\mapsto 
[ (1 \; \dotsb \; k) \otimes (a_1 \otimes \dots \otimes a_{k}) ].
\end{align*}
This map is well-defined, intertwines the differentials, and extends to an isomorphism of chain complexes
\begin{align*}
\theta : 
\Lambda^\bullet  C_{\bullet-1}^{\lambda}(A) \to  \bigoplus_{k \geq 0} \left( \mathbb{K}[\Sigma_{k}] \otimes A^{\otimes^k} \right)_{\Sigma_{k}}
\end{align*}
by setting, for~$[u_1],\dots,[u_l]$  with~$[u_i] \in C_{k_i - 1}^\lambda(A)$ and~$N := \sum_i k_i$:
\begin{align*}
\theta([u_1] \wedge \dots \wedge [u_l])
:= 
\Big[\Big((&1 \; \dotsb \; k_1) \circ (k_1 + 1 \; \dotsb \; k_2) 
\circ 
\dots 
\circ (k_{l-1} + 1 \; \dotsb \; k_l) \Big) \\
&\otimes 
( u_1 \otimes \dots \otimes u_l) ] 
\in 
\left(\mathbb{K}\Big[\Sigma_{N} \Big] \otimes A^{\otimes^N} \right)_{\Sigma_{N}}.
\end{align*}
\end{proposition}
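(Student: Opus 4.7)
The plan is to establish Proposition~\ref{PropositionThetaMapLodayQuillen} in three steps: well-definedness of the formula on the cyclic quotient, the single-cycle chain map property together with its multiplicative extension, and bijectivity of the extended map.

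\textbf{Well-definedness.} The target $(\mathbb{K}[\Sigma_k] \otimes A^{\otimes k})_{\Sigma_k}$ is the coinvariants under the diagonal $\Sigma_k$-action combining adjoint conjugation on $\mathbb{K}[\Sigma_k]$ with signed permutation on $A^{\otimes k}$. Applying $(1\;\dots\;k) \in \Sigma_k$ diagonally fixes the cycle $(1\;\dots\;k) \in \mathbb{K}[\Sigma_k]$ under conjugation, while sending $a_1 \otimes \dots \otimes a_k$ to $(-1)^{k-1}\,a_k \otimes a_1 \otimes \dots \otimes a_{k-1}$. Hence the cyclic relation defining $C^\lambda_{k-1}(A)$ is automatically imposed in the target, so the formula for $\theta$ on each single-cycle summand descends from $A^{\otimes k}$ to $C^\lambda_{k-1}(A)$.

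\textbf{Chain map and multiplicative extension.} Via the isomorphism $\phi_n$ of Proposition~\ref{PropositionInvariantTheoryOfGLn}, the Chevalley-Eilenberg differential on $C_\bullet(\gl_n(A))_{\gl_n(\mathbb{K})}$ transports to a differential $d_\Sigma$ on $\bigoplus_k (\mathbb{K}[\Sigma_k] \otimes A^{\otimes k})_{\Sigma_k}$. On the single-cycle summand indexed by $(1\;\dots\;k)$, expanding the bracket $[e_{ij} \otimes a, e_{rl} \otimes b] = \delta_{jr}\,e_{il}\otimes ab - \delta_{li}\,e_{rj}\otimes ba$ and matching the Kronecker symbols against the cyclic ordering via the trace formula implicit in $\phi_n$ selects exactly the contractions of adjacent indices, reproducing the Hochschild boundary $b(a_1 \otimes \dots \otimes a_k)$, which descends to the cyclic differential on $C^\lambda_{k-1}(A)$. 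The target also carries a natural graded-commutative product by permutation concatenation: $[\sigma \otimes \vec{a}] \cdot [\tau \otimes \vec{b}] := [(\sigma \sqcup \tau_{+k}) \otimes \vec{a} \otimes \vec{b}]$ with $\tau_{+k}$ acting on $\{k+1,\dots,k+l\}$, and $d_\Sigma$ is a graded derivation for this product. Since the formula for $\theta$ in the statement is the unique multiplicative extension of the single-cycle formula, and the Chevalley-Eilenberg differential satisfies the dual Leibniz rule on $\Lambda^\bullet C^\lambda_{\bullet-1}(A)$, the extended $\theta$ is a chain map.

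\textbf{Bijectivity and main obstacle.} Decompose $\mathbb{K}[\Sigma_k]$ into conjugacy-class summands labelled by partitions $\lambda = (k_1 \geq \dots \geq k_l) \vdash k$. By orbit-stabilizer, the $\Sigma_k$-coinvariants of each such summand reduce to coinvariants of $A^{\otimes k}$ under the centralizer $Z(\sigma_\lambda)$ of a standard representative $\sigma_\lambda = (1\;\dots\;k_1)(k_1+1\;\dots\;k_1+k_2)\cdots$. This centralizer is the semidirect product of the $l$ cyclic subgroups generated by each cycle with the subgroup permuting cycles of equal length: coinvariants under the cyclic subgroups yield the factors $C^\lambda_{k_i - 1}(A)$, while signed permutation of equal-length cycles antisymmetrizes them. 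Summing over $\lambda$ recovers exactly $\Lambda^\bullet C^\lambda_{\bullet-1}(A)$, and tracing generators confirms the identification agrees with $\theta$. The main obstacle is the sign bookkeeping in the chain map step: the Kronecker-delta contractions in the Lie bracket must conspire with the alternating Chevalley-Eilenberg signs to produce precisely the Hochschild-cyclic sign pattern, and one must verify that bracket-contributions crossing between different cycles either recombine multiplicatively under the extension or vanish after passing to $\Sigma_k$-coinvariants.
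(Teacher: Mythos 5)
Your high-level structure — well-definedness on each single-cycle summand, then showing the single-cycle map is a chain map and extending multiplicatively, then bijectivity via the conjugacy-class decomposition of $\mathbb{K}[\Sigma_k]$ and centralizers — is the standard one, and it is essentially what \cite[Chapter 10.2]{loday1998cyclic} (to which the paper defers; the appendix states this proposition without proof beyond a remark on well-definedness) carries out. Your well-definedness argument is correct: the diagonal action of $(1\;\dotsb\;k)$ on $\mathbb{K}[\Sigma_k]\otimes A^{\otimes k}$ fixes the cycle by conjugation and effects the signed cyclic shift $(-1)^{k-1}a_k\otimes a_1 \otimes \dots \otimes a_{k-1}$ on the tensor leg, reproducing the Connes relation. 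Your bijectivity sketch via centralizers of a standard representative $\sigma_\lambda$ is also the right decomposition (and the Koszul signs do come out: swapping two disjoint $m$-cycles has sign $(-1)^m = (-1)^{m\cdot m}$, matching the sign of swapping two degree-$m$ elements in $\Lambda^\bullet$).

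However, your chain-map step has a genuine gap, which you yourself flag as the ``main obstacle'' without resolving. You assert (a) that on single-cycle summands the transported differential $d_\Sigma$ reproduces the Hochschild/cyclic boundary, and (b) that $d_\Sigma$ is a graded derivation for the concatenation product, so that the multiplicative extension $\theta$ automatically intertwines. But (b) is precisely what requires proof: the Chevalley--Eilenberg differential produces, under $\phi_n$, bracket terms pairing indices that lie in \emph{different} cycles of $\sigma$, and one must verify that after passing to $\Sigma_N$-coinvariants these cross-cycle contributions vanish (equivalently, that $d_\Sigma$ only ``sees'' adjacent positions within a single cycle). Invoking uniqueness of the multiplicative extension presupposes (b), so as written the argument is circular. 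This cancellation is the content of \cite[Lemma~10.2.3]{loday1998cyclic}, and the calculation is a nontrivial use of the coinvariance, the adjacency structure in the trace formula defining $\phi_n$, and careful sign tracking; it cannot be waved through. Your sketch correctly identifies the shape of what must happen, but until the cross-cycle vanishing is established, the chain-map claim — and therefore the proposition — is not proved.
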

\begin{remark}
The reader is invited to check that the final product map is well-defined on all levels: It is independent of the ordering of~$[u_1] \wedge \dots \wedge [u_l]$, as a different ordering is equivalent to a permutation by~$\Sigma_N$, and we map into the~$\Sigma_N$-coinvariants. It is also independent of the choice of representatives~$u_i \in [u_i] \in C^\lambda_{k_i - 1}(A)$, since this is equivalent to a cyclic permutation acting on~$u_i$, and the cycle~$(k_{i-1} + 1 \; \dotsb \; k_i)$ is fixed under conjugation by itself.
\end{remark}
The differential on the domain of~$\theta$ is simply the differential of tensor product complexes. Hence, the Künneth theorem finally implies
\begin{align*}
\Lambda^\bullet H_{\bullet -1}^\lambda (A) \cong
\varinjlim H_\bullet \left( C_\bullet(\gl_n(A))_{\gl_n(\mathbb{K})} \right)
\cong
H_\bullet(\gl(A)).
\end{align*}
\section{The flat de Rham complex}
\label{AppendixFlatDeRhamPalamodov}
Let~$M$ be an smooth,~$n$-dimensional manifold (possibly with boundary), ~$N \subset M$ a closed submanifold, both of 
\begin{align*}
\dim H^k(N), \, \dim H^k(M) < \infty \quad \forall k \in \mathbb{N}_0.
\end{align*}
In this section, we want to make a short excursion in understanding the flat de Rham complex
\begin{align*}
\Omega_\fl^\bullet(M, N) := \{\omega \in \Omega^\bullet(M) : (j^\infty \omega)|_N = 0\},
\end{align*}
i.e. the subcomplex of~$\Omega^\bullet(M)$ given by forms which are flat on~$N$.
\begin{lemma}
\label{LemmaFlatDeRhamClosedAndComplemented}
Consider the complex
\begin{align}
\label{eq:RelativeFlatComplex}
0 \to \Omega^0_\fl(M,N) \to \dots \to \Omega^n_\fl(M,N) \to 0,
\end{align}
with the differential given by the restriction of the de Rham differential.
Its homology equals relative singular cohomology of the pair~$(M,N)$, and the differential has closed range.
\end{lemma}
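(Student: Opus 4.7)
The plan is to compute $H^\bullet(\Omega^\bullet_\fl(M,N))$ via a short exact sequence involving the $\infty$-jet complex along $N$, match it with the long exact sequence of the pair $(M,N)$, and then invoke Serre's Theorem~\ref{thm:SerreClosedRange} for the closed-range claim.

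Let $\mathcal{J}^\infty_N(\Omega^\bullet)$ denote the Fréchet complex of infinite jets along $N$ of smooth differential forms on $M$. Since $N$ is a closed submanifold, Whitney's extension theorem (applied to the components of a form with respect to a local frame of $\Lambda^\bullet T^*M$ and patched with a partition of unity) shows that the jet-restriction map $j: \Omega^\bullet(M) \to \mathcal{J}^\infty_N(\Omega^\bullet)$ is surjective with kernel exactly $\Omega^\bullet_\fl(M,N)$, yielding the short exact sequence of Fréchet complexes
\[
0 \to \Omega^\bullet_\fl(M,N) \to \Omega^\bullet(M) \xrightarrow{j} \mathcal{J}^\infty_N(\Omega^\bullet) \to 0.
\]
I would then show that the evaluation-at-$N$ map $\mathcal{J}^\infty_N(\Omega^\bullet) \to \Omega^\bullet(N)$ is a quasi-isomorphism: using a tubular neighborhood identifying a neighborhood of $N$ in $M$ with the normal bundle $\mathcal{N} \to N$, the jet bundle is globally isomorphic (after choosing a connection) to $\widehat{S^\bullet}(\mathcal{N}^*) \otimes \Lambda^\bullet T^*M\att_N$, and the de Rham differential splits as $d_N + d_{\mathrm{nor}}$. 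The formal Poincaré lemma in the normal directions—an algebraic statement about the Koszul complex of $\widehat{S^\bullet}(\mathcal{N}^*)$—together with the spectral sequence of the resulting bicomplex gives the desired quasi-isomorphism.

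Composing, the long exact sequence in cohomology becomes
\[
\cdots \to H^{k-1}_\dR(N) \to H^k(\Omega^\bullet_\fl(M,N)) \to H^k_\dR(M) \xrightarrow{i^*} H^k_\dR(N) \to \cdots,
\]
which is naturally isomorphic, via the classical de Rham theorem and the five lemma, to the singular cohomology long exact sequence of the pair $(M,N)$, giving $H^k(\Omega^\bullet_\fl(M,N)) \cong H^k(M,N)$. For the closed-range claim, the assumptions $\dim H^k(M), \dim H^k(N) < \infty$ together with the long exact sequence of the pair force $\dim H^k(M,N) < \infty$, so in each degree $d(\Omega^{k-1}_\fl(M, N))$ has finite codimension in the Fréchet subspace $\ker d \cap \Omega^k_\fl(M, N)$; Serre's theorem then yields closedness of the image in $\ker d$, hence in $\Omega^k_\fl(M,N)$. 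The main obstacle will be verifying the global quasi-isomorphism in the middle step: it requires carefully globalizing the formal Poincaré lemma via a tubular neighborhood and a connection, and checking that the associated formal-jet bicomplex spectral sequence degenerates as expected.
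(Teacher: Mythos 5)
Your proposal is correct, but it takes a genuinely different route from the paper's. The paper's proof is purely sheaf-theoretic: it applies the exact, softness-preserving ``restriction to~$M\setminus N$'' functor~$S \mapsto S_{M\setminus N}$ to the de~Rham resolution~$0 \to \underline{\mathbb{R}} \to \Omega^0 \to \Omega^1 \to \dotsb$, observes that~$\Omega^k_{M\setminus N}(U) = \Omega^k_\fl(U, U\cap N)$, and thereby recognizes the complex~\eqref{eq:RelativeFlatComplex} as the global sections of a soft resolution of~$\underline{\mathbb{R}}_{M\setminus N}$ --- whose sheaf cohomology is relative singular cohomology of~$(M,N)$. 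You instead realize~$\Omega^\bullet_\fl(M,N)$ as the kernel of the jet-restriction map~$j:\Omega^\bullet(M) \to \mathcal{J}^\infty_N(\Omega^\bullet)$, invoke the Whitney extension theorem for surjectivity, argue that~$\mathcal{J}^\infty_N(\Omega^\bullet)$ is quasi-isomorphic to~$\Omega^\bullet(N)$ via a tubular neighbourhood and a formal Poincar\'e lemma, and then read off~$H^\bullet(M,N)$ from the long exact sequence and the five lemma; the concluding Serre argument for closed range is identical in both. The paper's route is shorter and avoids the step you yourself flag as the main obstacle: after choosing a connection, the de~Rham differential on the jet bundle splits as~$d_N + d_{\mathrm{nor}}$ only up to curvature corrections, so one must filter by jet order and establish~$E_1$-degeneration with some care (it does work, but it is not a bare bicomplex as written). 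Your route, in exchange, produces a more explicit intermediate object --- the jet de~Rham complex along~$N$ --- and makes the role of the normal bundle visible, which is closer in spirit to the Gelfand--Kazhdan formal-geometry viewpoint and could be useful if one wanted to localize the statement further.
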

The proof utilizes some basic knowledge of sheaves and sheaf cohomology, for which we direct the reader to \cite{bredon1997sheaf}.
\begin{proof}
The proof idea uses ideas from \cite{nicolaescu2016derham}.
Denote by~$\underline{\mathbb{R}}$ the constant sheaf on~$M$ and by~$\Omega^k$ the soft sheaf of~$k$-forms on~$M$. Then there is the well-known resolution
\begin{align*}
0 \to \underline{\mathbb{R}} \to \Omega^0 \to \Omega^1 \to \dots
\end{align*}
Assigning to a sheaf~$S$ on~$M$ the sheaf~$S_{M \setminus N}$ on~$M$ with stalks
\begin{align*}
(S_{M \setminus N})_x := 
\begin{cases}
S_x &\text{ if } x \not \in N, \\
0 &\text{ if } x \in N
\end{cases}
\end{align*}
is an exact functor, since exactness of sequences of sheaves may be checked on stalks, see \cite [Section I.2]{bredon1997sheaf}. Further, if~$S$ was soft, then so is~$S_{M \setminus N}$ \cite[Prop II.9.13]{bredon1997sheaf}.
Then the sheaf~$\Omega^\bullet_{M\setminus N}$ assigns to an open~$U \subset M$ the set~$\Omega^\bullet_\fl(U, U \cap N)$, and, if~$U \subset M$ is diffeomorphic to~$\mathbb{R}^n$ then for~$\underline{\mathbb{R}}_{M \setminus N}$ we have
\begin{align*}
\underline{\mathbb{R}}_{M \setminus N} (U) := 
\begin{cases}
\mathbb{R} & \text{ if } U \cap N = \emptyset, \\
0 & \text{ if } U \cap N \neq \emptyset.
\end{cases}
\end{align*}
Thus
\begin{align*}
0 \to \underline{\mathbb{R}}_{M \setminus N} \to \Omega^0_{M \setminus N} \to \Omega^1_{M \setminus N} \to \dots \to \Omega^n_{M \setminus N} \to 0
\end{align*}
is a soft resolution of the sheaf~$\underline{\mathbb{R}}_{M \setminus N}$, and the complex \eqref{eq:RelativeFlatComplex} calculates its sheaf cohomology.
But by standard sheaf-theoretical arguments using resolutions by singular co\-chain spaces, the sheaf cohomology of~$\underline{\mathbb{R}}_{M \setminus N}$ equals relative singular cohomology of the pair~$(M,n)$, see for example \cite[Chapter III.1]{bredon1997sheaf}.
Now, since we assumed~$M$ and~$N$ to have finite-dimensional cohomology, the image of the de Rham differential is, in every degree, cofinite-dimensional within in its kernel~$\ker d_\dR$. Together with Theorem~\ref{thm:SerreClosedRange} and closedness of~$\ker d_{k+1}$, this shows the statement.
\end{proof}
%
%
\begin{remark}
It is likely possible to drop the assumption of finite-dimensionality of the cohomology groups of~$M$ and~$N$ and proceed in a similar manner as \cite[Proposition 5.4]{palamodov1972stein}, but we do not need this here.
\end{remark}

\section{Cosheaves and \v{C}ech homology}
\label{AppendixCosheavesOnABase}
We require a few definitions regarding the sheaflike aspects of the space of bornological Lie algebra chains~$C_\bullet^\born(\Gamma_c(\mathcal{K}))$. Much of the material within this section can be read up on in more detail in \cite{bredon1997sheaf}. We assume the reader is familiar with the basic notions of sheaves on topological spaces. 
\begin{definition} \cite[Chapter V.1]{bredon1997sheaf}
Let~$M$ be a topological space.
\begin{itemize}
\item[i)]  A \emph{precosheaf} (of abelian groups)~$\mathcal{P}$ on~$M$ is a covariant functor from the category of open sets of~$M$, morphisms given by inclusions, into the category of abelian groups.\\
Given an inclusion~$U \subset V$ of open sets, we denote the associated mapping~$\mathcal{P}(U) \to \mathcal{P}(V)$ by~$\iota_U^V$, called the \emph{extension map} from~$U$ to~$V$ of the precosheaf~$\mathcal{P}$.
\item[ii)]
A \emph{cosheaf} is a precosheaf~$\mathcal{P}$ with the property that for every open cover~$\mathcal{U}$ of an open set~$U \subset M$, the sequence
\begin{align*}
\bigoplus_{i,j} \mathcal{P}(U_i \cap U_j) \to 
\bigoplus_i \mathcal{P}(U_i) \to
\mathcal{P}(U)
\to 0
\end{align*}
is exact, where the maps are given by
\begin{align*}
(a_{ij})_{i,j} \mapsto 
\left( \sum_j
\iota_{U_i \cap U_j}^{U_i} (a_{ij} - a_{ji})
\right)_i, \quad
(b_i)_i \mapsto \sum_i \iota_{U_i}^U b_i.
\end{align*}
We call a cosheaf~$\mathcal{P}$ \emph{flabby} if all extension maps~$\iota_U^V$ are injective.
\item[iii)] A \emph{morphism of (pre-)cosheaves} is a natural transformation between the functors defining the (pre-)cosheaves.
\end{itemize}
\end{definition}
Most practical examples of cosheaves arise from some notion of compactly supported objects, since compactly supported objects on some open set can always extended by zero to bigger open sets. The following proposition formalizes this:
\begin{proposition}
\label{PropositionSoftSheavesGiveFlabbyCosheaves}
\cite[ Proposition V.1.6]{bredon1997sheaf} Let~$\mathcal{S}$ be a sheaf a topological space~$M$, and consider the precosheaf~$\mathcal{S}_c$ which associates to an open~$U \subset M$ the set
\begin{align*}
\mathcal{S}_c(U) := \{ s \in \mathcal{S}(M) : \supp s \subset U \}
\end{align*}
and whose extension maps are given by extending by zero.
If~$\mathcal{S}$ is soft, then~$\mathcal{S}_c$ is a flabby cosheaf.
\end{proposition}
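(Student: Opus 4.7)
The plan is to verify the two conditions separately. Flabbiness is immediate; the cosheaf axiom is the substantive part and relies essentially on softness together with the standing paracompactness that makes softness a useful notion.

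Flabbiness is clear: each extension map $\iota_U^V : \mathcal{S}_c(U) \to \mathcal{S}_c(V)$ is literally the inclusion of one subset of $\mathcal{S}(M)$ into another, hence injective.

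For the cosheaf axiom, I would fix an open set $U \subset M$ and an open cover $\mathcal{U} = \{U_i\}_{i \in I}$ of $U$; the task is to verify exactness of
\[
\bigoplus_{i,j} \mathcal{S}_c(U_i \cap U_j) \stackrel{\alpha}{\to} \bigoplus_i \mathcal{S}_c(U_i) \stackrel{\beta}{\to} \mathcal{S}_c(U) \to 0.
\]
For surjectivity of $\beta$: given $s \in \mathcal{S}_c(U)$ with closed support $K := \supp s \subset U$, I would first pass to a locally finite open refinement of $\mathcal{U}$ still covering $K$, shrink this via normality to closed sets $F_i \subset U_i$ covering $K$, and then use softness (sections on closed sets extend globally) to split $s$ as a locally finite sum $\sum_i s_i$ with $s_i \in \mathcal{S}(M)$ and $\supp s_i \subset U_i$. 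This is the standard partition-of-unity argument made available by the defining property of soft sheaves, and it exhibits $(s_i)_i$ as a preimage of $s$ under $\beta$.

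For exactness in the middle, suppose $(b_i)_i \in \bigoplus_i \mathcal{S}_c(U_i)$ satisfies $\sum_i b_i = 0$ in $\mathcal{S}(M)$. I would well-order $I$ and, for each $i$, apply the splitting argument above to the restriction of $b_i$ to the overlap $U_i \cap \bigcup_{j \neq i} U_j$, producing candidates $a_{ij} \in \mathcal{S}_c(U_i \cap U_j)$. Antisymmetrizing in $(i,j)$ and checking inductively, along the well-ordering, that the partial telescoping sums match the given $b_i$ would then yield the desired relation $b_i = \sum_j (a_{ij} - a_{ji})$.

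The main obstacle I anticipate is the middle exactness: the surjectivity reduces cleanly to the standard softness-plus-partition-of-unity mechanism, but producing an antisymmetric family $(a_{ij})$ that telescopes correctly requires care. In particular, one must ensure that for each fixed $i$ only finitely many $a_{ij}$ are nonzero (so the family actually lies in the direct sum), which should be arranged by starting from a locally finite refinement chosen fine enough that $\supp b_i$ meets only finitely many overlaps $U_i \cap U_j$.
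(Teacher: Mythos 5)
Your outline follows the standard Bredon-style argument, which is exactly what the paper's citation points to (the paper itself does not reprove the statement). Two points need tightening.

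First, the surjectivity step. You produce a ``locally finite sum'' $\sum_i s_i$, but the target is the direct sum $\bigoplus_i \mathcal{S}_c(U_i)$, whose elements have only finitely many nonzero components; a locally finite but infinite family is not an element of it. Indeed, with merely closed (not compact) supports the surjectivity fails: take $M = \mathbb{R}$, $\mathcal{S}$ the sheaf of smooth functions, $U = M$, the cover $U_i = (i-1,i+1)$, and $s$ the constant function~$1$ --- any finite sum of sections supported in finitely many $U_i$ has bounded support, so cannot equal $s$. What makes the proposition true is that the intended support family is compact (this is Bredon's standing hypothesis of a paracompactifying family of supports, and is what the paper actually applies to $\Gamma_c$ of vector bundles): for $\supp s$ compact you pass to a finite subcover of $\supp s$, and the resulting decomposition is honestly finite. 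You should invoke this compactness explicitly; the finiteness concern you raise at the end applies already to surjectivity, not only to middle exactness.

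Second, the middle exactness. Simply antisymmetrizing the per-$i$ splittings does not give the required relation: if $b_i = \sum_j c_{ij}$ with $\supp c_{ij} \subset U_i \cap U_j$, then putting $\tilde a_{ij} := c_{ij} - c_{ji}$ yields $\sum_j (\tilde a_{ij} - \tilde a_{ji}) = 2\bigl(b_i - \sum_j c_{ji}\bigr)$, and there is no reason for $\sum_j c_{ji}$ to vanish. The ``telescoping along a well-ordering'' you gesture at must be set up recursively: since $\sum_i b_i = 0$ one has $\supp b_1 \subset U_1 \cap \bigcup_{j \geq 2} U_j$; split $b_1 = \sum_{j \geq 2} a_{1j}$ with $\supp a_{1j} \subset U_1 \cap U_j$ via softness, set $a_{j1} := 0$, replace $b_j$ by $b_j + a_{1j}$ for $j \geq 2$ (these still sum to zero and keep their supports in $U_j$), and recurse over the remaining, finitely many, indices. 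Spelling out this recursion is what closes the gap you correctly flagged.
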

Completely dually to sheaf theory, one can define the \emph{\v{C}ech complex}~$\check{C}_\bullet(\mathcal{U};\mathcal{P})$ of a (pre-)cosheaf~$\mathcal{P}$ associated to an open cover~$\mathcal{U}$ of~$M$, which is then given as a chain complex
\begin{align*}
\dots \to \bigoplus_{i,j} \mathcal{P}(U_i \cap U_j) \to 
\bigoplus_i \mathcal{P}(U_i) \to 0,
\end{align*}
its differential being a skew-symmetric linear combination of the extension maps~$\iota_U^V$.
Its construction is fully dual to the standard, sheaf-theoretic \v{C}ech cochain complex, and we direct the reader to \cite[Chapter VI.4]{bredon1997sheaf} for details. We denote its homology by~$\check{H}_\bullet(\mathcal{U};\mathcal{P})$. The defining properties of a cosheaf~$\mathcal{P}$ directly imply~
\begin{align*}
\check{H}_0(\mathcal{U};\mathcal{P}) = \mathcal{P}(M)
\end{align*}
independently of the choice of~$\mathcal{U}$. 
Refinements of open covers induce on the associated \v{C}ech complexes the structure of a inverse system, and as such we may define the \v{C}ech homology of a cosheaf~$\mathcal{P}$ as the inverse limit of the \v{C}ech homologies of its associated open covers:
\begin{align*}
\check{H}_\bullet(M ; \mathcal{P}) := 
\varprojlim
\check{H}_\bullet(\mathcal{U} ; \mathcal{P}).
\end{align*}
Just like sheaf cohomology can be calculated in terms of resolutions, we shall calculate \v{C}ech homology in terms of \emph{coresolutions:}
\begin{definition}\cite[Chapter VI.7]{bredon1997sheaf}
\begin{itemize}
\item[i)] 
A precosheaf~$\mathcal{P}$ on~$M$ is called \emph{locally zero} if for every~$x \in M$ and every open neighbourhood~$U$ of~$x$ there is an open neighbourhood~$V \subset U$ so that~$\iota_U^V = 0$.
\item[ii)] A sequence of precosheaves
\begin{align*}
\mathcal{P}_1 \stackrel{f}{\to} \mathcal{P}_2 \stackrel{g}{\to} \mathcal{P}_3
\end{align*}
is called \emph{locally exact} ifthe precosheaf 
\begin{align*}
U \mapsto \im f(\mathcal{P}_1(U))/ \ker g (\mathcal{P}_2(U))
\end{align*}
is locally zero.
\item[iii)] A \emph{coresolution} of a cosheaf~$\mathcal{P}$ is a locally exact sequence of cosheaves
\begin{align*}
\dots \to \mathcal{P}_2 \to \mathcal{P}_1 \to \mathcal{P}_0 \to \mathcal{P} \to 0.
\end{align*}
The coresolution is called \emph{flabby} if the~$\mathcal{P}_0,\mathcal{P}_1,\dots$ (but not necessarily~$\mathcal{P}$) are flabby.
\end{itemize}
\end{definition}
To calculate \v{C}ech homology of cosheaves, the following result will be helpful:
\begin{proposition} \cite[Thms VI.7.2, VI.13.1]{bredon1997sheaf}
\label{PropositionConnectionCechAndCoresolutionHomology}
Let~$\mathcal{P}$ be a cosheaf on~$M$ with flabby coresolution
\begin{align*}
\dots \to \mathcal{P}_2 \to \mathcal{P}_1 \to \mathcal{P}_0 \to \mathcal{P} \to 0.
\end{align*}
\begin{itemize}
\item[i)] The \v{C}ech homology~$\check{H}_\bullet(M;\mathcal{P})$ is equal to the homology of the complex
\begin{align*}
\dots \to \mathcal{P}_2(M) \to \mathcal{P}_1(M) \to \mathcal{P}_0(M) \to 0.
\end{align*}
\item[ii)] 
If~$\mathcal{U}$ is an open cover of~$M$ with the property that
\begin{align*}
\dots \to \mathcal{P}_2(U) \to \mathcal{P}_1(U) \to \mathcal{P}_0(U) \to \mathcal{P}(U) \to 0
\end{align*}
is exact whenever~$U$ is a finite intersection of elements of~$\mathcal{U}$, then 
\begin{align*}
\check{H}_\bullet(\mathcal{U};\mathcal{P}) = \check{H}_\bullet(M;\mathcal{P}).
\end{align*}
\end{itemize}
\end{proposition}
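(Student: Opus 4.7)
The plan is to prove both parts via a Čech–coresolution double complex argument, dualising the classical Leray–Cartan computation of sheaf cohomology. First I would establish as a key lemma that flabby cosheaves are Čech-acyclic: for any flabby cosheaf $\mathcal{Q}$ on $M$ and any open cover $\mathcal{U}$,
\begin{align*}
\check{H}_0(\mathcal{U}; \mathcal{Q}) = \mathcal{Q}(M), \qquad \check{H}_r(\mathcal{U}; \mathcal{Q}) = 0 \text{ for } r \geq 1.
\end{align*}
The degree-zero identity is exactly the cosheaf axiom. The vanishing in higher degrees is proved by induction on the cardinality of subcovers of $\mathcal{U}$, dually to the flabby-sheaf acyclicity argument of Godement: injectivity of the extension maps $\iota_U^V$ lets one lift any cycle on a subcover to a cycle on a larger subcover, which reduces the problem to the trivial subcover.

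For part (i), fix an open cover $\mathcal{U}$ and form the first-quadrant double complex $D_{p,q} := \check{C}_p(\mathcal{U}; \mathcal{P}_q)$ from the flabby coresolution. Two spectral sequences converge to the homology of its total complex. Filtering by the Čech index $p$, the key lemma collapses each row, yielding $E^1_{0,q} = \mathcal{P}_q(M)$ and $E^1_{p,q} = 0$ for $p > 0$; the remaining vertical differential computes $H_\bullet(\mathcal{P}_\bullet(M))$, an expression manifestly independent of $\mathcal{U}$. Filtering by the coresolution index $q$, the local exactness of the coresolution implies that on a sufficiently fine refinement $\mathcal{U}'$ of $\mathcal{U}$ the columns $\mathcal{P}_\bullet(U_{i_0} \cap \dots \cap U_{i_p})$ are exact, so the second spectral sequence collapses to the Čech complex $\check{C}_\bullet(\mathcal{U}'; \mathcal{P})$. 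Passing to the inverse limit over all refinements yields $\check{H}_\bullet(M; \mathcal{P}) \cong H_\bullet(\mathcal{P}_\bullet(M))$, proving (i).

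For part (ii), the hypothesis upgrades local exactness of the coresolution to genuine exactness on every finite intersection from $\mathcal{U}$, which makes the second filtration collapse on the fixed cover without any passage to refinements. Running the same double-complex argument on $\mathcal{U}$ directly gives $\check{H}_\bullet(\mathcal{U}; \mathcal{P}) \cong H_\bullet(\mathcal{P}_\bullet(M))$, and combining with (i) concludes the identification $\check{H}_\bullet(\mathcal{U}; \mathcal{P}) = \check{H}_\bullet(M; \mathcal{P})$.

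The principal obstacle is the control of refinements in the second filtration of part (i). Local exactness of a sequence of precosheaves is a germ-level statement and does not provide exactness of $\mathcal{P}_\bullet(U) \to \mathcal{P}(U)$ on any specific open $U$; one must show the obstruction genuinely dies in the inverse limit over covers. Since Čech homology of cosheaves uses an \emph{inverse} limit (rather than the direct limit familiar from sheaf cohomology), the standard Leray template does not transport verbatim, and one must argue that cocycles locally killed by local exactness become globally boundaries after passing to a cofinal system of refinements. This delicate point is precisely why, in the concrete applications in the body of the paper, it is easier to arrange that the hypothesis of (ii) holds — via $q$-good covers and flabby coresolutions by cosheaves of compactly supported sections — rather than invoke (i) in its full generality.
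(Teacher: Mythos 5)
The paper gives no proof of this proposition at all---it is quoted from Bredon (Thms.\ VI.7.2 and VI.13.1)---so your attempt has to be measured against the standard argument there. Your overall skeleton (the double complex $\check{C}_p(\mathcal{U};\mathcal{P}_q)$, two filtrations, \v{C}ech-acyclicity of flabby cosheaves as the key input) is the right one, and the part of the argument establishing (ii) from the key lemma is sound. But there are two genuine gaps. First, the key lemma itself ($\check{H}_r(\mathcal{U};\mathcal{Q})=0$ for $r\geq 1$, $\mathcal{Q}$ flabby, $\mathcal{U}$ arbitrary) is only asserted: ``injectivity of the extension maps lets one lift any cycle'' is not an argument---injective maps let you conclude that elements are zero, they do not let you lift anything, and the flasque-sheaf proof you invoke rests on \emph{surjectivity} of restrictions (extending sections), whose formal dual is not an element-level induction of the kind you sketch. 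This is not a pedantic point: in the logical order of both Bredon and this paper, flabby \v{C}ech-acyclicity is \emph{derived from} the proposition you are proving (see Corollary~\ref{CorollaryFlabbyCosheavesTrivialCech}, whose proof applies the proposition to $0\to\mathcal{P}\to\mathcal{P}\to 0$), so taking it as your starting lemma without an independent proof risks circularity.

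Second, and more seriously, part (i) is not proven. Your intermediate claim that local exactness of the coresolution ``implies that on a sufficiently fine refinement $\mathcal{U}'$ the columns $\mathcal{P}_\bullet(U_{i_0}\cap\dots\cap U_{i_p})$ are exact'' is false: local exactness (the homology precosheaf is locally zero) only says that a homology class over a small open set dies after extension into a larger one; it never yields exactness over any actual open set, however small---that is exactly the extra hypothesis that distinguishes (ii) from (i). Consequently the second filtration does not collapse on any fixed cover, and one is forced to argue in the pro-system of covers, where $\check{H}_\bullet(M;\cdot)=\varprojlim_{\mathcal{U}}\check{H}_\bullet(\mathcal{U};\cdot)$ and the non-exactness of $\varprojlim$ must be confronted (vanishing of \v{C}ech homology of locally zero precosheaves, long exact sequences for locally exact sequences of cosheaves over paracompact spaces, induction along the coresolution). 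This is precisely the content of Bredon's VI.7.2, and your final paragraph concedes that this step is missing rather than supplying it. Since your derivation of (ii) ends by ``combining with (i)'' to identify $\check{H}_\bullet(\mathcal{U};\mathcal{P})$ with $\check{H}_\bullet(M;\mathcal{P})$, the gap in (i) propagates to (ii) as well.
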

\begin{corollary}
\label{CorollaryFlabbyCosheavesTrivialCech}
For every flabby cosheaf~$\mathcal{P}$ on~$M$, and every open cover~$\mathcal{U}$ of~$M$, we have
\begin{align*}
H_r(M,\mathcal{P}) = H_r(\mathcal{U},\mathcal{P}) = 
\begin{cases}
\mathcal{P}(M) &\text{ if } r = 0, \\
0 &\text{ else.}
\end{cases}
\end{align*}
\end{corollary}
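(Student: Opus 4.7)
The plan is to deduce this directly from Proposition~\ref{PropositionConnectionCechAndCoresolutionHomology} by exhibiting a particularly trivial flabby coresolution of a flabby cosheaf. Specifically, for any flabby cosheaf $\mathcal{P}$ on $M$, consider the coresolution
\begin{align*}
\cdots \to 0 \to 0 \to \mathcal{P} \stackrel{\id}{\to} \mathcal{P} \to 0,
\end{align*}
i.e.\ $\mathcal{P}_0 := \mathcal{P}$ and $\mathcal{P}_k := 0$ for $k \geq 1$, with the augmentation being the identity. This is a sequence of flabby cosheaves (the zero cosheaf is trivially flabby, and $\mathcal{P}$ is flabby by assumption), and it is evaluation-wise exact on every open set, hence a fortiori locally exact; so it constitutes a flabby coresolution in the sense of the appendix.

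First I would apply part~i) of Proposition~\ref{PropositionConnectionCechAndCoresolutionHomology} to this coresolution. The associated complex of global sections is $\cdots \to 0 \to 0 \to \mathcal{P}(M) \to 0$, whose homology is $\mathcal{P}(M)$ in degree $0$ and $0$ in every positive degree. This immediately yields the stated value of $\check{H}_\bullet(M, \mathcal{P})$.

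Next, for an arbitrary open cover $\mathcal{U}$ of $M$, I would verify the hypothesis of part~ii) of Proposition~\ref{PropositionConnectionCechAndCoresolutionHomology} using the same coresolution. For any open $U \subset M$ (in particular any finite intersection of elements of $\mathcal{U}$), evaluating on $U$ gives the sequence $\cdots \to 0 \to 0 \to \mathcal{P}(U) \stackrel{\id}{\to} \mathcal{P}(U) \to 0$, which is manifestly exact. Hence the proposition guarantees $\check{H}_\bullet(\mathcal{U}, \mathcal{P}) = \check{H}_\bullet(M, \mathcal{P})$, and combined with the previous paragraph this completes the corollary.

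There is essentially no obstacle in this proof: the only subtle point is to check that our trivial coresolution actually qualifies as a flabby coresolution in the technical sense used in the appendix, which is straightforward once one unwinds the definitions. The entire content lies in noting that a flabby cosheaf is its own resolution of length zero.
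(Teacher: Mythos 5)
Your proof is correct and takes essentially the same approach as the paper: both use the trivial flabby coresolution $0 \to \mathcal{P} \stackrel{\id}{\to} \mathcal{P} \to 0$ and then invoke both parts of Proposition~\ref{PropositionConnectionCechAndCoresolutionHomology}. You have simply spelled out the two applications of the proposition more explicitly than the paper does.
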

\begin{proof}
Consider the flabby coresolution~$0 \to \mathcal{P} \stackrel{\id}{\to} \mathcal{P} \to 0$ and apply Proposition~\ref{PropositionConnectionCechAndCoresolutionHomology}.
\end{proof}
One concept which one might hope for in the theory of cosheaves is a dual version of the well-known concept of sheafification, in other words, a way to universally assign to every precosheaf an appropriate cosheaf. For sheaves, one speaks of a left-adjoint functor to the inclusion of presheaves into sheaves, and sheafification exists for presheaves in most standard categories, e.g. the category of sets or abelian groups. Since sheafification respects stalks, locally, the original presheaf and its associated sheafification carry the same information.
Surprisingly, the dual concept of ``cosheafification'' is a lot more involved, and even existence of this concept in most standard categories is a difficult question, let alone constructing it explicitly, see for example \cite{curry2014sheaves}.
Instead, we will consider the concept of a \emph{cosheaf on a base}. While the dual notion of sheaves on a base is well-studied, we are not aware of any mention in the literature of the cosheaf-theoretic version thereof.
\begin{definition}
Let~$\mathcal{B}$ be a topological base of~$M$. In the following, view~$\mathcal{B}$ as a subcategory of the category of open sets of~$M$. 
\begin{itemize}
\item[i)] A \emph{precosheaf~$S$ on }~$\mathcal{B}$ is a covariant functor from~$\mathcal{B}$ to the category of abelian groups. We denote the image of~$U \in \mathcal{B}$ as~$S(U)$ and the arising extension maps for~$U \subset V \in \mathcal{B}$ by~$\iota_U^V$.
\item[ii)] Choose for any~$U \in \mathcal{B}$ an open cover~$\{U_i\}_{i \in I}$ by elements in~$\mathcal{B}$, and for every~$i,j \in I$ an open cover ~$\{V_{ij,k}\}_{k \in K}$ of~$U_i \cap U_j$ by elements in~$\mathcal{B}$. We call a precosheaf~$S$ on~$\mathcal{B}$ a \emph{cosheaf on~$\mathcal{B}$} if, for all such choices, the following sequence is exact:
\begin{align*}
0 \leftarrow
P(U)
\leftarrow
\bigoplus_i P(U_i)
\leftarrow
\bigoplus_{ijk} P(V_{ij,k}).
\end{align*}
\item[iii)] A morphism of (pre-)cosheaves on~$\mathcal{B}$ is a natural transformation of the functors defining the (pre-)cosheaves.
\end{itemize}
\end{definition}
The sequence is the analogue of the cosheaf condition, but rather than working with all open sets, we just work with elements of a topological base~$\mathcal{B}$. If~$\mathcal{B}$ is chosen as the topology of~$M$, then this definition is equivalent to the definition of a cosheaf on~$M$. 
This is precisely the dual of the well-studied concept of sheaves on a base, by viewing~$\mathrm{Ab}$-valued cosheaves as~$\text{Ab}^{\text{op}}$-valued sheaves. 
\begin{theorem}
\label{thm:CosheavesOnBaseExtendToCosheavesOnSpace}
Given a topological space~$M$ and a topological base~$\mathcal{B}$ of~$M$. An~$\mathrm{Ab}$-valued cosheaf on~$\mathcal{B}$ extends, up to cosheaf isomorphism, uniquely to a cosheaf on~$M$. A morphism between two cosheaves on~$\mathcal{B}$ of~$M$ extends uniquely to a morphism between the induced cosheaves on~$M$.
\end{theorem}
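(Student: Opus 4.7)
The plan is to exploit the self-duality inherent in the cosheaf axiom: an $\Ab$-valued cosheaf on $\mathcal{B}$ is precisely an $\Ab^{\mathrm{op}}$-valued sheaf on $\mathcal{B}$, since the defining coequalizer sequence in $\Ab$ is an equalizer sequence in $\Ab^{\mathrm{op}}$. Because $\Ab$ is cocomplete, $\Ab^{\mathrm{op}}$ is complete, and the classical theorem on extending sheaves on a topological base to sheaves on the full space applies verbatim in this target category, yielding both existence and uniqueness of the cosheaf extension. In principle this already finishes the argument, but because explicit extension maps feature in the applications (cf.~Subsection~\ref{subsec:SmoothFormsAndKernelCosheaf}), I would also give the construction concretely.

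For an open $U \subset M$, write $\mathcal{B}_U := \{V \in \mathcal{B} : V \subset U\}$, viewed as a poset category under inclusion, and set
\begin{align*}
\tilde{S}(U) := \varinjlim_{V \in \mathcal{B}_U} S(V),
\end{align*}
with extension maps $\tilde{S}(U) \to \tilde{S}(U')$ for $U \subset U'$ induced by the inclusion of indexing categories $\mathcal{B}_U \hookrightarrow \mathcal{B}_{U'}$. When $U \in \mathcal{B}$, the object $U$ is terminal in $\mathcal{B}_U$, so $\tilde{S}(U) \cong S(U)$, confirming that $\tilde{S}$ restricts to $S$ on the base. Functoriality of $\tilde{S}$ is automatic from functoriality of the colimit.

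The main step, and where I expect the work to concentrate, is verifying the cosheaf axiom for $\tilde{S}$ on an arbitrary open cover $\mathcal{U} = \{U_i\}$ of an open $U \subset M$. My approach is to refine: cover each $U_i$ by elements of $\mathcal{B}_{U_i}$ and each pairwise intersection $U_i \cap U_j$ by elements of $\mathcal{B}_{U_i \cap U_j}$. The cosheaf axiom on $\mathcal{B}$ then controls these base-level covers, and the desired exactness of
\begin{align*}
\bigoplus_{i,j} \tilde{S}(U_i \cap U_j) \to \bigoplus_i \tilde{S}(U_i) \to \tilde{S}(U) \to 0
\end{align*}
should follow from that data together with the general fact that colimits commute with colimits (so in particular with direct sums). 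The main obstacle is organising this diagram chase cleanly, since the colimits over $\mathcal{B}_{U_i}$ and $\mathcal{B}_{U_i \cap U_j}$ must be shown to assemble into the colimit over $\mathcal{B}_U$; this requires verifying cofinality of the suitable subsystems inside $\mathcal{B}_U$, which is what makes the refinement step nontrivial even though each individual input is formal.

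Uniqueness and extension of morphisms then follow formally from the universal property. Any cosheaf $T$ on $M$ whose restriction to $\mathcal{B}$ equals $S$ receives, for each $U$, compatible maps $S(V) = T(V) \to T(U)$ for $V \in \mathcal{B}_U$, and hence a canonical comparison $\tilde{S}(U) \to T(U)$; the cosheaf axiom for $T$ applied to a basic cover of $U$ forces this comparison to be an isomorphism. A morphism $f : S \to S'$ of cosheaves on $\mathcal{B}$ extends, by functoriality of the colimit on each $\mathcal{B}_U$, to a morphism $\tilde{f} : \tilde{S} \to \tilde{S}'$ of cosheaves on $M$, and naturality and uniqueness of this extension are immediate from the same universal property.
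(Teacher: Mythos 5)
Your first paragraph is exactly the paper's proof: the paper simply observes that an $\mathrm{Ab}$-valued cosheaf on a base is an $\mathrm{Ab}^{\mathrm{op}}$-valued sheaf on a base, that $\mathrm{Ab}^{\mathrm{op}}$ is complete since $\mathrm{Ab}$ is cocomplete, and invokes the classical extension theorem for sheaves on a base valued in a complete category. The additional explicit colimit construction you sketch goes beyond what the paper records and is not needed for the statement, so the proposal is correct and takes essentially the same route.
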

\begin{proof}
The following proof is due to \cite{jgon2020cosheaf}.
The analogue statement for~$\mathcal{C}$-valued sheaves is true whenever~$\mathcal{C}$ is a complete category (see \cite{vakil2017rising} or \cite[Lemma 2.2.7]{qing2002algebraic}). 
However, since Ab is a cocomplete category,~$\text{Ab}^{\text{op}}$ is a complete category. This proves the statement.
\end{proof}
It is known that the setwise cokernels of cosheaf morphisms are again cokernels \cite[Prop VI.1.2]{bredon1997sheaf}, the proof being a simple diagram chase. This straightforwardly extends to cosheaves on a base:
\begin{proposition}
\label{prop:CokernelCosheaves}
Let~$\mathcal{B}$ be a topological base of~$M$.
Let further~$\phi : \mathcal{P} \to \mathcal{S}$ be a morphism of cosheaves on~$\mathcal{B}$, and define a precosheaf~$\coker \phi$ by assigning to~$B \in \mathcal{B}$
\begin{align*}
\coker \phi (B) := \mathcal{S}(B) / \phi(\mathcal{P}(B)),
\end{align*} 
with extension maps induced by the cosheaf maps of~$\mathcal{S}$. Then~$\coker \phi$ defines a cosheaf on~$\mathcal{B}$.
\end{proposition}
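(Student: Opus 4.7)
The plan is to verify the cosheaf-on-a-base condition directly, exploiting the fact that both the axiom and the formation of cokernels are colimit constructions in $\mathrm{Ab}$, together with the fact that colimits commute with colimits. Concretely, fix $B \in \mathcal{B}$ together with covering choices $\{U_i\}_{i \in I}$ and $\{V_{ij,k}\}_{k \in K_{ij}}$ as in the definition of a cosheaf on a base, and arrange the three tail sequences
\begin{gather*}
\bigoplus_{i,j,k} \mathcal{P}(V_{ij,k}) \to \bigoplus_i \mathcal{P}(U_i) \to \mathcal{P}(B) \to 0, \\
\bigoplus_{i,j,k} \mathcal{S}(V_{ij,k}) \to \bigoplus_i \mathcal{S}(U_i) \to \mathcal{S}(B) \to 0, \\
\bigoplus_{i,j,k} \coker\phi(V_{ij,k}) \to \bigoplus_i \coker\phi(U_i) \to \coker\phi(B) \to 0
\end{gather*}
into a commuting ladder connected vertically by $\phi$ in the top step and by the canonical quotient maps in the bottom step. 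The first two rows are exact by the cosheaf condition on $\mathcal{P}$ and $\mathcal{S}$, and the columns are right exact by the definition of $\coker\phi$.

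First I would check surjectivity of $\bigoplus_i \coker\phi(U_i) \to \coker\phi(B)$: an element of $\coker\phi(B)$ lifts to $\mathcal{S}(B)$, which in turn lifts to $\bigoplus_i \mathcal{S}(U_i)$ by the $\mathcal{S}$-cosheaf condition; projecting to cokernels gives the desired preimage. Exactness at the middle term is a routine diagram chase: given $(\bar{s}_i) \in \bigoplus_i \coker\phi(U_i)$ mapping to zero in $\coker\phi(B)$, lift to $(s_i) \in \bigoplus_i \mathcal{S}(U_i)$; its image in $\mathcal{S}(B)$ lies in $\phi(\mathcal{P}(B))$, and by modifying $(s_i)$ with an element from $\phi(\bigoplus_i \mathcal{P}(U_i))$ without changing $(\bar{s}_i)$, one may arrange the image in $\mathcal{S}(B)$ to be zero; then the $\mathcal{S}$-row exactness produces a preimage in $\bigoplus_{i,j,k} \mathcal{S}(V_{ij,k})$, whose projection to $\bigoplus_{i,j,k} \coker\phi(V_{ij,k})$ maps to $(\bar{s}_i)$.

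A conceptually cleaner presentation that I would prefer is to observe that the cosheaf condition rewrites $\mathcal{P}(B)$ and $\mathcal{S}(B)$ as coequalizers of the cover diagrams. Since cokernels are colimits and colimits commute with colimits in the abelian category $\mathrm{Ab}$, the value $\coker\phi(B) = \coker(\mathcal{P}(B) \to \mathcal{S}(B))$ automatically inherits the same coequalizer presentation with respect to the cover. This is the cosheaf dual of the standard fact that kernels of sheaf morphisms are sheaves, and can equivalently be read off by passing through the identification of $\mathrm{Ab}$-valued cosheaves with $\mathrm{Ab}^{\mathrm{op}}$-valued sheaves that was already used to prove Theorem~\ref{thm:CosheavesOnBaseExtendToCosheavesOnSpace}.

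I do not anticipate a genuine obstacle: the content of the proposition is purely formal and the argument is a standard diagram chase. The only point requiring mild care is bookkeeping — in the base setting, double intersections $U_i \cap U_j$ are not themselves required to lie in $\mathcal{B}$, so one must carry the auxiliary covers $\{V_{ij,k}\}$ through the argument rather than simply invoking the cosheaf axiom at $U_i \cap U_j$; but this is accounted for in the definition of a cosheaf on a base and does not cause any additional difficulty.
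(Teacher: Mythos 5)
Your proposal is correct; it supplies the routine diagram chase (or, equivalently, the colimits-commute-with-colimits argument) that the paper itself omits, appealing instead to Bredon's Proposition VI.1.2 for the analogous statement on the full space and asserting that it extends straightforwardly to the base setting. Your closing bookkeeping remark --- that $U_i \cap U_j$ is not required to lie in $\mathcal{B}$, so one must keep the auxiliary covers $\{V_{ij,k}\}$ throughout --- identifies precisely the only place where the base-setting argument differs from Bredon's, and you handle it correctly.
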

\bibliographystyle{unsrt}
\bibliography{lit}
\end{document}